\numberwithin{equation}{section}
\theoremstyle{plain}
\newtheorem{theorem}{Theorem}[section]
\newtheorem{corollary}[theorem]{Corollary}
\newtheorem{definition}[theorem]{Definition}
\newtheorem{lemma}[theorem]{Lemma}
\theoremstyle{definition}
\newtheorem{remark}[theorem]{Remark}
\newtheorem*{remark*}{Remark}
\newtheorem*{defn*}{Definition}
\newtheorem*{example*}{Example}
\newtheorem*{theorem*}{Theorem}
\newtheorem*{corollary*}{Corollary}
\newtheorem*{helein*}{H\'elein's Conjecture, $\bs n=\bf 3$}
\newcommand{\bs}{\boldsymbol}
\newcommand{\bb}{\mathbb}
\newcommand{\pd}{\partial}
\def\RR{\mathbb R}
\renewcommand{\SS}{\mathbb S^2}
\def\nn{\text{\mathbf n}}
\def\bn{\boldsymbol n}
\def\div{\mathop{\mathrm{dvi}}}
\def\nn{\mathbf n}
\newcommand{\ed}{\end{document}}
\begin{document}

\title{{A Proof of H\'elein's Conjecture on Boundedness of
Conformal Factors when n = 3}}
\author{P.I. Plotnikov \footnote{Lavrentyev Institute of
Hydrodynamics,
 Lavrentyev pr.~15,
Novosibirsk 630090, Russia, email: Plotnikov @ mail.ru}~ and J.F.
Toland
 \footnote{Department of Mathematical Sciences, University of Bath, Bath,
 BA2 7AY, UK, email:masjft@bath.ac.uk}}

%\keywords{Cosset theory of shall, Willmore functional }
\date{}

\maketitle

\tableofcontents
%\begin{center}
%{\Large \bf   Part I\\ Immersions with square integrable second
%fundamental form. }
%\end{center}
%\pagestyle{myheadings} \markboth{ Geometrical Problems}

\begin{abstract}
 \emph{For smooth mappings of the
unit  disc into the oriented Grassmannian manifold $\mathbb
G_{n,2}$, H\'elein (2002)  conjectured the global existence of
Coulomb frames with bounded conformal factor provided the integral of $|\boldsymbol A|^2$, the squared-length of the
 second fundamental form, is less than $\gamma_n=8\pi$.
 It has since been shown   that  the optimal bounds on the integral of $|\bs A|^2$ that guarantee this result are:  $\gamma_3 = 8\pi$ and $\gamma_n = 4\pi$ for  $n \geq 4$.
For isothermal immersions, this hypothesis  is equivalent to saying the integral of the sum of the squares of the principal curvatures is less than $\gamma_n$.}

\emph{The goal here is to prove that when $n=3$ the same conclusion  holds under weaker hypotheses. In particular, it holds  for isothermal immersions when  $|\boldsymbol A|$ is  square-integrable and the integral of $|K|$, $K$ the Gauss curvature, is less than $4\pi$.  Since $2|K| \leq |\boldsymbol A|^2$ this implies the known result for isothermal immersions, but $|K|$ may be small  when $|\boldsymbol A|^2$ is  large.}

\emph{That  the result under the weaker  hypothesis is sharp is
shown by  Enneper's surface and stereographic projections. The method, which is purely analytic,    is  then extended to investigate the case when the length of the second fundamental
form is square-integrable.}

\end{abstract}

\section{Introduction}
%\renewcommand{\theequation}{1.\arabic{equation}}
%\setcounter{equation}{0}

%\subsection{Preface}

In applications it is often important  to understand
the global behaviour of an immersed surface, or even a curve, in  $\RR^n$. Although  the theory of curves is non-trivial,
much has been done since the work of Axel Schur \cite{schur} (see
also Chern \cite[\S 4,\,\S 5]{chernsurfaces}) and recently the focus
has been on surfaces. For example, an important
question in conformal geometry in $\RR^n$ is   whether or not a surface has
 bi-Lipschitz isothermal coordinates. The following elementary  description of the case $n=3$ introduces notation and  sets the scene for what follows.

   \subsection{Isothermal Immersions}\label{isothermal} Let $D_1$ denote the closed unit disc centered at the origin in $\RR^2$. A smooth mapping
$\Psi: D_1\to \mathbb R^3$ is called an isothermal
immersion and $e^f$  is its
 conformal factor if,  with $
\partial_i={\partial}/{\partial X_i}
$,
\begin{equation}\label{aelita11-}
\partial_i\Psi(X)= e^{f(X)}\mathbf e_i(X), \quad \mathbf e_i(X)\cdot\mathbf
e_j(X)=\delta_{ij},\quad X \in D_1.
\end{equation}
Then the coefficients of the first fundamental form \cite[\S2.2]{kobayashi}, \cite[\S 6.1.1]{pressley} of the surface $\Psi(D_1)$ are  $E=G=e^{2f}$ and $F = 0$.  Since $\partial_{12}\Psi = \partial_{21}\Psi$ and $\partial_k(
\mathbf e_i\cdot\mathbf
e_j)=0$, $i,j,k \in \{1,2\}$, it follows that
 $$%\label{aelita6}
    \partial_1 f=-\mathbf e_1\cdot\partial_2\mathbf e_2, \quad \partial_2
    f=\mathbf e_1\cdot\partial_1\mathbf e_2\text{~\rm in~} D_1,$$
    and hence
    \begin{equation}\label{Lapf} -\Delta f (X) = \partial_1 {\bf e}_1\cdot \partial_2 {\bf e} _2 -  \partial_1 {\bf e}_2 \cdot \partial_2 {\bf e}_1.
    \end{equation}
Let $\nn (X) = {\bf e}_1(X)\times {\bf e}_2(X)$.
Then, since     $\pd_1\Psi$ and $\pd_2\Psi$ are normal to $\nn$, the coefficients of the second fundamental form \cite[\S2.2]{kobayashi}, \cite[\S 7.1]{pressley}, of $\Psi(D_1)$ are
\begin{equation}\begin{split}&L := \pd_{11}\Psi\cdot \nn = -\pd_1\Psi\cdot \pd_1\nn;\quad N := \pd_{22}\Psi \cdot \nn = -\pd_{2}\Psi \cdot\pd_2 \nn,
\\&  - \pd_1\Psi\cdot \pd_2\nn =\pd_{21}\Psi\cdot \nn  =  :M := \pd_{12}\Psi\cdot \nn =
 -\pd_2\Psi\cdot \pd_1\nn,\end{split}\label{lef}
\end{equation}
 and, by \eqref{aelita11-},  the Gauss curvature $K$  of  $\Psi(D_1)\subset \RR^3$ is given by \cite[\S2.2]{kobayashi}, \cite[Cor. 8.1.3]{pressley},
\begin{align}\notag
K&= \frac{LN-M^2}{EG-F^2} = e^{-4f}(LN-M^2)\\ \notag &= e^{-4f}\big\{(\pd_1\Psi\cdot \pd_1\nn)(\pd_{2}\Psi \cdot\pd_2 \nn) -(\pd_1\Psi\cdot \pd_2\nn)(\pd_2\Psi\cdot \pd_1\nn)\big\}\\
\notag &=e^{-2f}\big\{({\bf e}_1\cdot \pd_1\nn)({\bf e}_2 \cdot\pd_2 \nn) -({\bf e}_1\cdot \pd_2\nn)({\bf e}_2\cdot \pd_1\nn)\big\}
\\\label{gausscurv} &=e^{-2f}\big\{(\pd_1{\bf e}_1\cdot \nn)(\pd_2{\bf e}_2 \cdot \nn) -(\pd_2{\bf e}_1\cdot \nn)(\pd_1{\bf e}_2\cdot \nn)\big\}.
\end{align}
Now   since ${\bf e}_i\cdot \nn= 0$, ${\bf e}_i\cdot{\bf e}_j =\delta_{ij}$ and $\|{\bf e}_j\|^2=1$ on $D_1$, $i,j= 1,2$,
\begin{align*}
\partial_j\mathbf e_1&=(\partial_j \mathbf e_1\cdot \mathbf
e_2)\mathbf e_2+(\partial_j \mathbf e_1\cdot \mathbf n) \mathbf n,
&\partial_j\mathbf e_1\cdot\mathbf e_2&=-\mathbf
e_1\cdot\partial_j\mathbf e_2,
\\
\partial_j\mathbf e_2&=(\partial_j \mathbf e_2\cdot \mathbf
e_1)\mathbf e_1+(\partial_j \mathbf e_2\cdot \mathbf n) \mathbf n,
 &\partial_j\mathbf e_i\cdot\mathbf n~&=-\mathbf
e_i\cdot\partial_j\mathbf n.
\end{align*}
Therefore,
$$
\partial_1\mathbf e_1\cdot \partial_2\mathbf e_2=(\mathbf n\cdot \partial_1\mathbf e_1)
(\mathbf n\cdot \partial_2\mathbf e_2)  \text{ and } \partial_2\mathbf e_1\cdot \partial_1\mathbf e_2=(\mathbf n\cdot \partial_2\mathbf e_1)
(\mathbf n\cdot \partial_1\mathbf e_2),
$$
and so, by  \eqref{Lapf}  and \eqref{gausscurv},
\begin{equation}\label{aelita11}  -\Delta f  = \partial_1\mathbf e_1\cdot \partial_2\mathbf e_2 - \partial_1\mathbf e_2\cdot \partial_2\mathbf e_1 = e^{2f}K.
\end{equation}
Note also that $\|\nn\|^2 = 1$ implies that
$\mathbf n\cdot\partial_i\mathbf n=0$, and therefore that
$\partial_i \mathbf n=-( \mathbf n\cdot \partial_i\mathbf e_1)\mathbf e_1 -(\mathbf n\cdot \partial_i\mathbf e_2)\mathbf e_2.$
Therefore
\begin{align*}\notag
 \partial_1\mathbf n \times \partial_2\mathbf n&=\big[(\mathbf n\cdot \partial_1\mathbf e_1)\mathbf e_1+(\mathbf n\cdot \partial_1\mathbf e_2)\mathbf e_2\big]
 \times \big[(\mathbf n\cdot \partial_2\mathbf e_1)\mathbf e_1+(\mathbf n\cdot \partial_2\mathbf e_2)\mathbf e_2\big] \\
&= \big((\mathbf n\cdot
\partial_1\mathbf e_1)(\mathbf n\cdot \partial_2\mathbf e_2)-
 (\mathbf n\cdot \partial_1\mathbf e_2)(\mathbf n\cdot \partial_2\mathbf e_1)\big) \mathbf n \notag \\
&=\big(
\partial_1\mathbf e_1\cdot\partial_2\mathbf e_2-
\partial_1\mathbf e_2\cdot\partial_2\mathbf e_1\big)\nn,% \label{inna9=}
\end{align*}
and it follows that
\begin{equation*}%\label{defnPhi}
-\Delta f =   \nn \cdot (\partial_1\mathbf n \times \partial_2\mathbf n).
\end{equation*}

    Thus
\begin{equation}\label{aelita11+}
-\Delta f= \left\{ \begin{array}{l}K\, e^{2f}\\\Phi:= \nn \cdot (\partial_1\mathbf n \times \partial_2\mathbf n)\end{array}\right\} \quad  \text{~in~} D_1,
\end{equation}
where $K$ is the Gauss curvature of the surface $\Psi(D_1)$. To estimate the $L^2$-norm of $|\nabla f|$ it  suffices  to show
that the right-hand side of \eqref{aelita11+} is in $ W^{-1,2}$.

Now the isothermal immersion $\Psi$ of $D_1$ into
$\mathbb R^3$ has  an associated Gauss map, namely the  unit vector  $\nn(X),\,X \in D_1$, normal to the immersed surface.
Since $\|\nn\|^2 =1$ on $D_1$, it follows from \eqref{lef} that
\begin{align*}\pd_1 \nn &= (\pd_1 \nn\cdot {\bf e}_1) {\bf e}_1 + (\pd_1\nn \cdot {\bf e}_2){\bf e}_2 = -e^{-f}L{\bf e}_1 -e^{-f}M{\bf e}_2,
\\ \pd_2 \nn &= (\pd_2 \nn\cdot {\bf e}_1) {\bf e}_1 + (\pd_2\nn \cdot {\bf e}_2){\bf e}_2 = -e^{-f}M{\bf e}_1  -e^{-f}N{\bf e}_2,
\end{align*}
and hence
\begin{equation}\label{conc1}|\nabla \nn|^2 = e^{-2f}\big(L^2 + 2M^2 + N^2\big) = e^{2f}|\bs A|^2 \text{ on }D_1,
  \end{equation}
  where
  $$|\bs A|^2 =e^{-4f}\big( L^2 + 2M^2 + N^2\big) = (2H)^2- 2K$$
  and $H$, the mean curvature of $\Psi(D_1)$, is given by \cite[\S2.2]{kobayashi}, \cite[Cor. 8.1.3]{pressley},
  $$H = \frac{LG-2MF+NE}{2(EG-F^2)} =\frac{L+N}{2e^{2f}}.
  $$
Since $2H = \gamma_1 + \gamma_2$ and $K = \gamma_1\gamma_2$, where $\gamma_1,\gamma_2$ are the principal curvatures of the surface $\Psi(D_1)$, it follows that  $|\bs A|^2 = \gamma_1^2 +\gamma_2^2$. Here  $|\bs A|^2$, which is referred to as the squared-length of the second fundamental form of the surface,  is independent of the parametrization $\Psi$ of $\Psi(D_1)$. (For a general surface,  $|\bs A|^2 $ is the square of the Hilbert-Schmidt norm \cite[Ch. XI]{dunford} of its Weingarten map \cite[\S\S 7.2 \& 8.1]{pressley} on the tangent space.)  Moreover,
\begin{equation*}%\label{conc2}
\pd_1 \nn \times \pd_2\nn = e^{-2f}(LN-M^2)\nn = Ke^{2f} \nn.
\end{equation*}
This shows  for isothermal imbedding that
\begin{equation*}%\label{A&N}
\begin{split}
\int_{D_1}|\nabla \nn|^2 dX &=\int_{D_1} |\mathbf A|^2 e^{2f}\, dX=
\int_{D_1} |\mathbf A|^2 d\mu_g\\&= \int_{\Psi(D_1)} |\mathbf A|^2\, dS
= \int_{\Psi(D_1)} (\gamma_1^2 +\gamma_2^2)\, dS,\end{split}\end{equation*}
where $\mu_g = e^{2f} dX$, and
\begin{equation*}%\label{N&S}
\begin{split}
\int_{D_1}|\partial_1 \mathbf n\times
\partial_2\mathbf n| dX&=\int_{D_1} |K| e^{2f} \, dX= \int_{D_1}
|K| d\mu_g\\&= \int_{\Psi(D_1)} |K|\, dS = \int_{\Psi(D_1)}| \gamma_1\gamma_2|\, dS\end{split}
\end{equation*}
where $\gamma_1,\gamma_2$ are the principal curvatures of $\Psi(D_1)$. In general,
\begin{equation}\label{eqq}
2\int_{D_1}|\partial_1 \mathbf n\times
\partial_2\mathbf n| dX \leq  \int_{D_1}|\nabla \nn|^2 dX
\end{equation}
and equality holds for zero-mean curvature (minimal) surfaces.
\qed

   Toro  \cite{toro0,toro} used methods from geometric measure (variform) theory and harmonic analysis to  prove striking results that have led to significant developments. Among other things she proved the following.
\begin{theorem*} \textbf{(Toro)}
\emph{There exists $\varepsilon_0>0,$ such that any two-dimensional
surface $S$ in $ \mathbb R^n$, $n\geq 3$, has an isothermal
bi-Lipshitz parameterization for which the logarithm  of the
conformal factor is uniformly bounded if
\begin{itemize}  \setlength\itemsep{0em}
\item[($i$)]$S$ can be approximated in the Hausdorff metric by a
sequence of smooth surfaces $S_k$ for which there exists $\rho>0$,
$\beta>0$ such that the area of the intersection $S_k\cap B(x,
\rho)$ is less than $\beta$ for all $x\in S_k$ and all $k$.
\item[($ii$)] For $x\in S_k$,
\begin{equation}\label{aelita2}
    \int_{S_k\cap B(x,\rho)} |\mathbf A_k|^2\, dS <\varepsilon_0,
\end{equation}
where $|\mathbf A_k|$ is the length of the second fundamental form
of $S_k$.
\end{itemize}}
\end{theorem*}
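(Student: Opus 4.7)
The plan is to exploit the two identities derived above, $-\Delta f = \Phi$ with $\Phi = \nn\cdot(\partial_1\nn\times\partial_2\nn)$ and $|\nabla\nn|^2 = e^{2f}|\bs A|^2$, so that condition~(ii) translates directly into a small Dirichlet-energy bound on the Gauss map in any isothermal chart: $\int_{D_r}|\nabla\nn|^2\,dX = \int_{\Psi(D_r)}|\bs A_k|^2\,dS <\varepsilon_0$. The argument will have four steps: local isothermal parametrisation, a Wente/Hardy-space estimate, globalisation, and passage to the Hausdorff limit.

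\emph{Local charts.} Fix $k$ and $p\in S_k$. Using the uniformisation theorem for smooth Riemannian surfaces together with the area bound in (i), I would produce isothermal immersions $\Psi : D_1 \to S_k$ parametrising a ball-neighbourhood of $p$ of a uniform Euclidean radius $\rho_0 \leq \rho$. The relevant monotonicity estimates for surfaces of uniformly bounded local $L^2$-norm of the second fundamental form ensure that $\rho_0$ depends only on $\beta$, $\rho$ and $\varepsilon_0$, not on $k$ or $p$.

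\emph{Wente estimate on each chart.} On such a chart the conformal factor satisfies $-\Delta f = \Phi$, and $\Phi$ is a sum of Jacobian determinants of components of the Gauss map $\nn$. This is the classical setting for Wente's inequality: from $\|\nabla\nn\|_{L^2(D_1)}^2 < \varepsilon_0$ one obtains
\begin{equation*}
\|f - c\|_{L^\infty(D_{1/2})} + \|\nabla f\|_{L^2(D_{1/2})} \leq C\varepsilon_0
\end{equation*}
for a universal $C$ and some constant $c$, since $\Phi$ lies in the real Hardy space $\mathcal H^1$ with norm controlled by $\|\nabla\nn\|_{L^2}^2$, and $\Delta^{-1}$ maps $\mathcal H^1$ continuously into $L^\infty_{\text{loc}}$. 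A Möbius normalisation of $\Psi$ pins down $c$ and yields bi-Lipschitz bounds on $\Psi$ with constants of the form $e^{\pm C\varepsilon_0}$.

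\emph{Globalisation.} To globalise I would invoke the uniformisation theorem for the underlying Riemann surface structure on $S_k$, producing a single global isothermal parametrisation $\Psi_k : \Omega_k \to S_k$ of a fixed canonical domain $\Omega_k$. The global conformal factor $f_k$ is controlled by chaining the local $L^\infty$-oscillation estimates of the previous step along a covering of $S_k$, whose cardinality is bounded by the area hypothesis (i); choosing $\varepsilon_0$ sufficiently small keeps the accumulated oscillation bounded independently of $k$.

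\emph{Hausdorff limit.} Uniform bi-Lipschitz bounds make $\{\Psi_k\}$ an equicontinuous family with equicontinuous inverses, so Arzel\`a-Ascoli combined with Hausdorff convergence $S_k \to S$ extracts a subsequential bi-Lipschitz limit $\Psi$ parametrising $S$ isothermally, whose conformal factor inherits the uniform bound. The principal obstacle is the globalisation step: the Wente estimate only controls $f$ up to an additive constant per chart, so the essential work is to bound the drift of these constants between overlapping charts via a covering/chaining argument, which is precisely where the smallness of $\varepsilon_0$ must be balanced against the combinatorial data supplied by~(i).
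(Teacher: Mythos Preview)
The paper does not prove this statement. Toro's theorem is quoted in the introduction as background, attributed to \cite{toro0,toro}, and no proof or sketch is given; the paper's own contributions concern Theorems \ref{aelita22} and \ref{aelita30} and the H\'elein conjecture for $n=3$. So there is nothing in the paper to compare your proposal against.

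As for the proposal itself: the broad architecture (local isothermal charts, Wente/Hardy-space control of $-\Delta f = \Phi$, globalisation, compactness) is in the spirit of the M\"uller--\u{S}ver\'ak approach rather than Toro's original geometric-measure-theory arguments, and is plausible as a strategy. But two steps are genuinely incomplete as stated. First, in the ``local charts'' step you assert that uniformisation plus the area bound in (i) yields isothermal charts of a uniform Euclidean radius $\rho_0$; this is circular, since producing such charts with controlled conformal factor is essentially the content of the theorem, and the monotonicity estimates you invoke do not by themselves supply isothermal coordinates --- you would need, for instance, a graphical decomposition plus a small-energy argument to get started. Second, the globalisation step is the crux and you acknowledge it, but ``chaining local oscillation bounds'' over a covering whose cardinality is controlled by (i) is not enough: the covering may have many charts and the additive constants can drift; one needs either a global uniformisation argument (which again presupposes control you are trying to prove) or a more careful harmonic-replacement/gauge argument to kill the drift. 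These are exactly the points where the real work lies, and the proposal does not indicate how they would be carried out.
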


\begin{remark}\label{eps*} Obviously an estimate of the optimal value of $\varepsilon_0>0$   is essential  if results like this are to be useful in applications, see Remark \ref{epsilon0}. \qed
\end{remark}
Motivated by  Toro's work ,  M\"uller \& S\u{v}er\'ak
\cite{sverakmuller} investigated properties of immersions of the
plane $\mathbb R^2$ into  Euclidean space $\mathbb R^n$ when the
second fundamental form is square-integrable. To do so they
 reformulated the problem in terms of the oriented
Grassmannian manifold $\bb G_{n,2}$, of two-dimensional oriented subspaces of $\mathbb R^n$, embedded in complex projective
space $\bb C\bb P^{n-1}$, and  used
compensated-compactness methods from  partial-differential-equations
theory.

\subsection{H\'elein's Conjecture $\bs n=3$}

In his monograph, H\'elein proved a result  \cite[Lem.\,5.1.4]{helein},  on
mappings from the unit disc
$D_1$ in $\RR^2$ into the Grassmannian manifold $\mathbb G_{n,2}$, which is widely
used when analyzing  variational problems in the theory of surfaces
with bounded Willmore energy.  To do so he  did
not assume   that   $u$, from the disc to the Grassmannian manifold,
corresponds to  oriented tangent spaces  to a surface.

\begin{theorem*} \textbf{(H\'elein   \cite[Lem.\,5.1.4]{helein})}
\emph{For a  mapping $u: D_1 \to \mathbb G_{n,2}$  with
\begin{equation}\label{helein:}
\|d u\|^2_{L^2(D_1)}\leq  8\pi/3-\delta , \quad \delta>0,
\end{equation}
there exists $ (\mathbf e_1(X), \mathbf e_2(X))\in u(X),~X \in D_1$,
such that
$$
\mathbf e_i\cdot \mathbf e_j=\delta_{ij}, \quad \|\nabla\mathbf
e_j\|_{L^2(D_1)}\leq c(\delta),
$$
and the frame $\mathbf e_i$ forms a so-called Coulomb frame.}
\end{theorem*}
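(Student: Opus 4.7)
The plan is to produce the Coulomb frame variationally and to close the estimates through a Wente-type inequality exploiting the Jacobian structure of the curvature of the connection $1$-form $\mathbf e_1\cdot d\mathbf e_2$.

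Since $D_1$ is contractible, a smooth $u$ admits a smooth orthonormal lift $(\tilde{\mathbf e}_1,\tilde{\mathbf e}_2)\in u$ (the general case is handled by density). Any other orthonormal lift is then obtained by a pointwise rotation through an angle $\theta\colon D_1\to\mathbb R$, under which the connection $1$-form $\omega:=\mathbf e_1\cdot d\mathbf e_2$ transforms as $\omega=\tilde\omega+d\theta$, with $\tilde\omega:=\tilde{\mathbf e}_1\cdot d\tilde{\mathbf e}_2$. Orthonormality yields the pointwise identity
\begin{equation*}
|\nabla\mathbf e_1|^2+|\nabla\mathbf e_2|^2=|du|^2+2|\omega|^2,
\end{equation*}
so minimizing the frame Dirichlet energy among all orthonormal lifts of $u$ is equivalent to minimizing $\int_{D_1}|\omega|^2$ over $\theta\in H^1(D_1)$.

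The Euler--Lagrange equation is the Neumann problem $-\Delta\theta=d^{\ast}\tilde\omega$ in $D_1$, $\partial_\nu\theta+\tilde\omega(\nu)=0$ on $\partial D_1$, solvable by Hodge theory. Its solution produces an $\omega$ that is coclosed in $D_1$ and tangent to $\partial D_1$, which is exactly the Coulomb condition $\operatorname{div}(\mathbf e_1\cdot\nabla\mathbf e_2)=0$. On the simply connected disc, this coclosed $\omega$ admits a potential $\omega=\nabla^\perp\phi$ with $\phi\in H^1_0(D_1)$, and the identity
\begin{equation*}
-\Delta\phi=\partial_1\mathbf e_1\cdot\partial_2\mathbf e_2-\partial_2\mathbf e_1\cdot\partial_1\mathbf e_2
\end{equation*}
displays the right-hand side as a sum over the ambient coordinates of Wente Jacobians $J(a,b):=\partial_1 a\,\partial_2 b-\partial_2 a\,\partial_1 b$.

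Wente's lemma on the disc then supplies a sharp constant $\kappa$ with
\begin{equation*}
\|\omega\|_{L^2(D_1)}^2=\|\nabla\phi\|_{L^2(D_1)}^2\le\kappa\,\|\nabla\mathbf e_1\|_{L^2(D_1)}^2\|\nabla\mathbf e_2\|_{L^2(D_1)}^2.
\end{equation*}
With $A:=\|\omega\|_{L^2}^2$ and $E:=\|du\|_{L^2}^2$, combining this with the energy identity and the arithmetic-geometric inequality $\|\nabla\mathbf e_1\|^2\|\nabla\mathbf e_2\|^2\le((E+2A)/2)^2$ produces the self-consistent quadratic bound $A\le(\kappa/4)(E+2A)^2$. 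For $E<8\pi/3-\delta$ this quadratic inequality in $A$ has two positive roots separated by a $\delta$-dependent gap. A continuity argument along the scaling homotopy $u_t(X):=u(tX)$ with $u_0$ constant, hence $A(0)=0$, keeps $A(t)$ on the lower branch throughout $t\in[0,1]$, yielding $\|\omega\|_{L^2(D_1)}\le c(\delta)$ and, via the energy identity, $\|\nabla\mathbf e_j\|_{L^2(D_1)}\le c(\delta)$.

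The hard part will be to obtain a Wente inequality with a sharp constant adapted to the mixed boundary data appearing here (Dirichlet for $\phi$, tangential for $\omega$) and to verify that this sharp constant matches the threshold $8\pi/3$ stated in the theorem. Once that constant is in hand, the bootstrap inequality and the homotopy argument are routine.
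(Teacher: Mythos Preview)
The paper does not prove this statement; it is quoted as a known result from H\'elein's monograph, and what the paper actually proves is the stronger \emph{conjecture} (Section~3.4) with the constant $8\pi/3$ replaced by $8\pi$ (in fact under the still weaker hypothesis $\int_{D_1}|\partial_1\mathbf n\times\partial_2\mathbf n|\,dX\le 4\pi-\delta$). So there is no ``paper's own proof'' of the $8\pi/3$ theorem to compare against.

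That said, your sketch is essentially H\'elein's original argument: variational construction of the Coulomb frame, the pointwise identity $|\nabla\mathbf e_1|^2+|\nabla\mathbf e_2|^2=|du|^2+2|\omega|^2$, the potential $\phi$ with Jacobian right-hand side, a Wente-type estimate, and the scaling homotopy to stay on the lower branch. This is the correct outline for the $8\pi/3$ result, and your identification of the ``hard part'' (matching the sharp Wente constant to the threshold $8\pi/3$) is accurate.

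It is worth contrasting this with what the paper does for the sharper result. The continuation-by-scaling step and the Coulomb-frame construction in Section~3.4 are the same as yours. The decisive difference is how the right-hand side $\Phi=\mathbf n\cdot(\partial_1\mathbf n\times\partial_2\mathbf n)$ is placed in $W^{-1,2}$. You (and H\'elein) use the Wente inequality for $-\Delta\phi=\partial_1\mathbf e_1\cdot\partial_2\mathbf e_2-\partial_2\mathbf e_1\cdot\partial_1\mathbf e_2$, which produces a quadratic bootstrap whose threshold is fixed by the Wente constant and cannot exceed $8\pi/3$. The paper instead writes $\Phi$ directly in divergence form $\Phi=\partial_2\omega_1-\partial_1\omega_2$ via rotated spherical coordinates (Section~3.1), with the singularity of $\omega_i$ located at a single point $\bn'\in\SS$ which can be chosen in the complement of $\nn(D_1)$; averaging over such $\bn'$ gives a linear $W^{-1,2}$ bound on $\Phi$ (Theorem~2.2) without any bootstrap. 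This is why the paper reaches the optimal $8\pi$ (equivalently $4\pi$ for $\int|\partial_1\mathbf n\times\partial_2\mathbf n|$), whereas the Wente route you outline is intrinsically capped at $8\pi/3$.
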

He further conjectured \cite[Conj. 5.2.3]{helein} that the same conclusion should holds when $8\pi/3$ in \eqref{helein:} is replaced by $8\pi$.
For $n=3$,
$\mathbb G_{3,2}$ can be identified with the  sphere $\mathbb S^2$
of unit vectors in $\mathbb R^3$, and $u$ in \eqref{helein:} with $\mathbf n:D_1 \to
\mathbb S^2$. In that  case H\'elein's conjecture has the following form in which it is not assumed that  $\mathbf n$ is
a field of normals to a surface in $\RR^3$.

\begin{helein*}\label{inna2} Let  $\mathbf n: D_1\to \mathbb S^2$
satisfy
\begin{equation}\label{inna1}
    \int_{D_1} |\nabla \mathbf n|^2\, dX\leq 8\pi-\delta, \quad \delta >0.
\end{equation}
Then there exist an orthonormal moving frame $(\mathbf e_1, \mathbf e_2)$
and a function $f$ such that the vectors
$\mathbf e_i(X)$ are orthogonal to $\mathbf n(X)$, the
frame $(\mathbf e_1(X), \mathbf e_2(X), \mathbf n(X))$ has
positive orientation, $\mathbf n\cdot (\mathbf e_1\times \mathbf e_2)>0$, and
\begin{equation}\label{inna3}
\|\mathbf e_i\|_{W^{1,2}(D_1)}\leq c(\delta), \quad i=1,2,
\end{equation}
\begin{equation}\label{inna4}
    \partial_1 f=-\mathbf e_1\cdot\partial_2\mathbf e_2, \quad \partial_2
    f=\mathbf e_1\cdot\partial_1\mathbf e_2\text{~\rm in~} D_1,
\end{equation}
\begin{equation}\label{inna5}
    -\Delta f=\partial_1 \mathbf e_1\cdot \partial_2\mathbf e_2-
    \partial_2 \mathbf e_1\cdot \partial_1\mathbf e_2
    \text{~\rm in~} D_1, \quad  f=0\text{\rm~on~}\partial D_1,
\end{equation}
\begin{equation}\label{inna6}\begin{split}
 \|\nabla f\|_{L^2(D_1)}\leq c(\delta), \quad
   \|f\|_{L^\infty(D_1)}\leq c(\delta).
\end{split} \end{equation}
\end{helein*}

Following a Grassmannian approach, Sch\"atzle \cite[Appendix A, Prop. 5.1]{schatzle}  confirmed the conjecture for all $n$ when he proved the following.

\theoremstyle{plain}
\newtheorem*{Sch*}{Proposition(Sch\"atze \cite[Prop. 5.1]{schatzle} )}
\begin{Sch*}

Let $f : D_1 \subset \RR^2 \to \RR^n$  be a conformal immersion with induced
metric $g_{ij} = e^{2u}\delta_{ij}$ and square integrable second fundamental  form $\bs A$ satisfying
\begin{equation}\label{aelita11} \int_{D_1} |\bs A|^2d\mu_g \leq \left\{ \begin{array}{l}8\pi-\delta \text{ for } n=3,\\ 4\pi-\delta  \text{ for } n\geq 4,\end{array}\right.\end{equation}
for some $\delta >0$
Then there exists a smooth solution $v : D_1 \to  \RR$ of
\begin{equation}\Delta_g v = K_g \text{ on }D_1 \label{aelita15a}\end{equation}
satisfying
$$
\|v\|_{L^\infty(D_1)},~ \|Dv\|_{L^2(D_1)},~ \|D^2v\|_{L^1(D_1)} \leq  C(n, \delta)  \int_{D_1} |\bs A|^2d\mu_g .
$$
\end{Sch*}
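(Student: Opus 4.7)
The plan is to convert the intrinsic equation $\Delta_g v = K_g$ into a Euclidean Poisson problem whose right-hand side carries a null-form (Jacobian) structure, and then to apply compensated compactness. Since $g_{ij} = e^{2u}\delta_{ij}$, the operator $\Delta_g$ equals $e^{-2u}\Delta$, so the equation is equivalent to
\[
\Delta v = e^{2u}K_g \quad \text{in } D_1,
\]
and I would solve it with zero boundary data, reducing the theorem to bounding the Newtonian potential of $e^{2u}K_g$. Identity \eqref{aelita11+} for $n=3$ already displays this right-hand side as the Jacobian $\nn\cdot(\partial_1\nn\times\partial_2\nn)$ of the Gauss map; for $n \geq 4$ an analogous null-form expression is obtained by representing the tangent plane of the immersion through an orthonormal frame.

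The core step is the construction of a Coulomb tangent frame $(\mathbf e_1,\mathbf e_2)$. By \eqref{conc1}, in the isothermal gauge
\[
\int_{D_1}|\nabla\mathbf n|^2\,dX = \int_{D_1}|\bs A|^2\,d\mu_g,
\]
so the hypothesis places the Dirichlet energy of the Gauss map (into $\SS$ when $n=3$, into $\bb G_{n,2}$ otherwise) strictly below the threshold of H\'elein's conjecture and its Grassmannian analogue. I would invoke that result to obtain an orthonormal tangent frame satisfying the Coulomb system \eqref{inna4}--\eqref{inna5} together with a uniform bound
\[
\|\nabla\mathbf e_i\|_{L^2(D_1)} \leq C(n,\delta).
\]
Expanding the Gauss-equation expression for $e^{2u}K_g$ in this frame, one writes it as a finite sum of Jacobians $\partial_1 a\,\partial_2 b - \partial_2 a\,\partial_1 b$ in which $a$ and $b$ are scalar components of $\mathbf e_1, \mathbf e_2$ (and, for $n\geq 4$, of a dual normal frame), each lying in $W^{1,2}(D_1)$ with norm controlled by $\int|\bs A|^2\,d\mu_g$.

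The closing step is the Coifman--Lions--Meyer--Semmes strengthening of Wente's inequality. Any Jacobian $J = \partial_1 a\,\partial_2 b - \partial_2 a\,\partial_1 b$ with $a,b\in W^{1,2}(D_1)$ lies in the local Hardy space $\mathcal H^1$, and the solution of $\Delta v = J$ with zero boundary values satisfies the simultaneous bound
\[
\|v\|_{L^\infty(D_1)} + \|\nabla v\|_{L^2(D_1)} + \|\nabla^2 v\|_{L^1(D_1)} \leq C\|\nabla a\|_{L^2}\|\nabla b\|_{L^2}.
\]
Summing over the Jacobians produced in the preceding step and substituting the frame estimates yields the advertised bound $C(n,\delta)\int|\bs A|^2\,d\mu_g$.

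The principal obstacle is the Coulomb-frame construction at the \emph{sharp} energy level. With H\'elein's original lemma (energy up to $8\pi/3$) the whole argument is essentially automatic; the real difficulty is pushing the threshold to the optimal value $8\pi$ when $n=3$ and $4\pi$ when $n\geq 4$. Those numbers are exactly the Dirichlet energies of the smallest non-trivial bubbles --- a conformal sphere $\SS\to\SS$ in the first case and a holomorphic sphere into $\bb G_{n,2}$ (where the smaller constant $4\pi$ reflects the complex structure available only in higher codimension) in the second --- and a no-neck/energy-quantization analysis is required to preclude concentration of the Gauss map below these thresholds. Once that geometric obstruction is overcome, the Wente machinery delivers the analytic estimates without further work.
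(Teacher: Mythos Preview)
The paper does not prove this proposition; it is quoted from Sch\"atzle \cite{schatzle} and the authors only record, in one sentence, that ``the proof is based on Kuwert \& Sch\"atzle \cite{kuwertsch}, and on the M\"uller--\v{S}ver\'ak estimates of the K\"ahler form in complex projective space.'' So there is no in-paper proof to compare against in detail, only that one-line description of the method.

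Against that description, your route is genuinely different. Sch\"atzle's argument works directly with the Gauss map into $\bb C\bb P^{n-1}$ and controls $e^{2u}K_g$ via the M\"uller--\v{S}ver\'ak K\"ahler-form estimates, without first producing a global Coulomb frame; the bounds on $v$ then follow, and H\'elein's conjecture at the sharp threshold is obtained as a \emph{corollary} (the paper says exactly this in the Remark following the proposition). You run the logic the other way: assume a sharp-threshold Coulomb frame, rewrite $e^{2u}K_g$ as Jacobians of frame components, and close with Wente/CLMS. That implication is fine and is essentially the easy direction of the equivalence ``sharp H\'elein $\Leftrightarrow$ Sch\"atzle's estimate.''

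The gap is that your ``principal obstacle'' is the theorem itself. Historically and logically, Sch\"atzle's proposition \emph{is} what established the Coulomb frame at the optimal constants $8\pi$ and $4\pi$; invoking H\'elein's conjecture at those thresholds to prove it is circular unless you supply an independent proof of the frame construction. For $n=3$ such an independent proof exists --- it is precisely the content of this paper (Theorem \ref{aelita22} plus Section \ref{inna}) --- so your scheme can be made non-circular in that case. For $n\geq 4$ you offer no substitute for the $\bb C\bb P^{n-1}$ analysis, and the ``no-neck/energy-quantization'' route you sketch is not how Sch\"atzle's proof goes; you would have to carry out a separate argument there.
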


The proof is based on  Kuwert\&
Sch\"{a}tzle \cite{kuwertsch}, and on the M\"uller-S\u{v}er\'ak
estimates of the K\"ahler form in  complex  projective space.

\begin{remark}
 Suppose $\Psi: D_1\to \mathbb R^3$ is an isothermal
immersion with conformal factor $e^f$. Then
 $v-f$ is harmonic by \eqref{aelita11+} and  \eqref{aelita15a}.   It then   follows from \eqref{conc1} and \eqref{aelita11}  that H\'elein's Conjecture for $n=3$  is a Corollary of  Sch\"atzle's proposition above,  at least for immersions. That the result is optimal is shown by examples using
Enneper's surface following \cite[Cor. 5.1]{kuwert}.\qed
\end{remark}

\begin{remark}\label{epsilon0}
 By Toro's theorem, singularities occur only if energy in excess
of  $\varepsilon_0$  in \eqref{aelita2}   concentrates  at a finite
number of distinct points. So H\'elein's Conjecture for $n=3$
implies   $\varepsilon_0 \geq 8\pi$  (see Remark \ref{eps*}).\qed
\end{remark}

 {In what follows, conclusions \eqref{inna3} -\eqref{inna6} are established under the hypothesis
\begin{equation}
\mathbf n \in W^{1,2}(D_1, \mathbb S^2),\quad
\int_{D_1}|\partial_1 \mathbf n\times
\partial_2\mathbf n| dX \leq 4\pi - \delta,\quad \delta >0,\label{inna1+}
\end{equation}
which is implied by, but weaker than,   \eqref{inna1};
 see the discussion leading to \eqref{eqq}.}

\subsection{Applications}
Identifying the  best constant in H\'elein's Conjecture is
particularly important  when $n=3$  because  of
applications,  such as  arise when deciding the regularity of
solutions of variational problems in biology, elasticity theory, and
elsewhere when the unknown variables describe a surface. For example:

\emph{In biology,} the  main component of the Heimlich functional, in
the variational theory of problems involving living cell membranes,
is the Willmore conformal energy.   Hence  questions of regularity
of surfaces with square-integrable second fundamental forms  and the
possibility of  singularities are important in the
mathematical theory of biological membranes, see
\cite{mondino}, \cite{KMR} and references therein.\qed

\emph{In hydroelasticity,}  the surface shape of steady  waves on  a
three-dimensional expanse of fluid which is at rest at infinite
depth and moves irrotationally under gravity, bounded above by a
frictionless elastic sheet which has gravitational potential energy,
bending energy proportional to the square integral of its mean
curvature (its Willmore functional), and stretching energy
determined by the position of its particles relative to a reference
configuration, is governed by equations that arise as critical
points of a natural Lagrangian. The resulting theory
\cite{plottol_1,plottol_2} must ensure that the wave  surface is
non-self-intersecting. \qed

\emph{In general relativity,} when  the universe is defined as
$\mathbb R^3$ with a metric tensor $g_{ij}$, the so-called
quasi-local Hawking mass energy $m(\Sigma)$  is defined for  an
arbitrary domain $\Omega\subset \mathbb R^3$, bounded by a closed
surface $\Sigma$ with mean curvature $H$, as
\begin{equation*}
    m(\Sigma)= \frac{(\text{Area}\Sigma)^{1/2}}{(16\pi)^{3/2}}
    \Big(16\pi-\int_\Sigma H^2\, d\Sigma\Big).
\end{equation*}
See  \cite{KOE} for references and further reading. \qed

\subsection{Organization of the Paper}
In  the hope of being accessible to mathematicians  who need  these results for applications but  are not professional geometers,
this paper deals exclusively with the case  $n=3$ using only analysis techniques and
partial differential equations,  without reference to complex projective space or Grassmannian manifolds.

Section \ref{main} is a brief description of Theorem \ref{aelita22} and \ref{aelita30}, which are the main results upon which the rest of the
discussion relies.

Section \ref{asol} is  a self-contained yet elementary  proof of
Theorem \ref{aelita22}. The notation is set out in Section \ref{asol2},
estimates are developed in Section \ref{estimates}, and the main part of the proof is in Section
\ref{asol3}. Then, in Section \ref{inna},   {an extension of H\'elein's Conjecture for
$n=3$} is established using Theorem \ref{aelita22} and the continuation argument
in
 H\'elein's book \cite{helein}.

In  Section \ref{krit000}, the theory of Section \ref{asol} is
extended to uncover what can be said without the
hypothesis  of H\'elein's Conjecture  or, more precisely, without
hypothesis \eqref{aelita23}  in Theorem \ref{aelita22}. The proof of
Theorem \ref{aelita30}
 relies on constructions from Section \ref{asol}, augmented by   a corollary of
Federer's Theorem \cite[Thm. 3.2.22]{federer} from geometric measure
theory.

In  Section \ref{niza},
 Enneper's
classical minimal surfaces, which when suitably parameterized share their Gauss
maps with stereographic projections, provide examples that  show the sense in which the  results obtained are optimal.

\section{Notation and Main Results}\label{main}
\subsection{Notation}

Let
\begin{align*}
D_r&=\{|X|\leq r,\, X=(X_1,X_2)\in \RR^2\}, \text{ a closed disk  in
the plane},
\\  D_r^\circ &=\{|X|< r,\, X\in \RR^2\}, \text{ an open  disk  in the plane,}\\
\mathbb S^2 &= \{\boldsymbol \xi \in \mathbb R^3: |\boldsymbol \xi |=1\}, \text{ the unit sphere in $\RR^3$},\\
\boldsymbol k &= (0,0,1), \text{ the north pole of }\SS, \text{ and }\SS_0 = \SS \setminus \{+{\boldsymbol k},-{\boldsymbol k}\},\\
 W^{1,2}(D_r, \mathbb S^2) &= \Big\{\mathbf u \in  W^{1,2}(D_r, \RR^3):  \mathbf u(X) \in \SS \text{almost everywhere on }D_r\Big\}.
\end{align*}

\begin{defn*}
A map $\mathbf u \in W^{1,2}(D_1, \mathbb S^2)$, which has finite
energy
\begin{equation}\label{aelita18} E(\mathbf u):=   \int_{D_1}|\nabla\mathbf u|^2\, dX<\infty, \end{equation}
is said to be smooth, written $\mathbf u \in C^\infty(D_1,\SS)$, if
for some $r>1$ there is an infinitely differentiable  $\mathbf v:
D^\circ_r\to \SS$ with $\mathbf v (X) = \mathbf u(X)$ almost
everywhere on $D_1$. Let $ C^\infty_0(D_1,\SS) = \{\mathbf u \in
C^\infty(D_1,\SS): \mathbf u \text{ has compact support in }
D_1^\circ\}$ and let $W^{1,2}_0(D_1,\SS)$ be the completion of
$C^\infty_0(D_1,\SS)$ in $W^{1,2}(D_1,\SS)$.
\end{defn*}
\begin{lemma}\label{smooth}  ${C^\infty(D_1,\SS)}$ is dense in $W^{1,2}(D_1, \mathbb S^2)$.
\end{lemma}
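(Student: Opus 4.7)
The plan is to approximate $\mathbf{u}$ by mollification and then project back onto $\SS$ using the nearest-point projection $\pi$ from a tubular neighborhood of $\SS\subset\mathbb{R}^3$. The critical ingredient, specific to the borderline dimension $p=n=2$, is that mollifications of $\mathbf{u}$ remain \emph{uniformly} close to $\SS$, so the projection is globally well-defined.

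First I would extend $\mathbf{u}$ to a map in $W^{1,2}(D_{1+\eta},\SS)$ for some $\eta>0$, e.g.\ by reflection across $\partial D_1$ (either normal reflection $r\mapsto 2-r$ in polar coordinates, or inversion $X\mapsto X/|X|^2$), which preserves both the $\SS$ constraint and the $W^{1,2}$ norm up to a multiplicative constant. Let $\rho_\varepsilon$ be a standard mollifier supported in $B_\varepsilon(0)$ and set $\mathbf{u}_\varepsilon := \rho_\varepsilon * \mathbf{u}\in C^\infty(D^\circ_{1+\eta-\varepsilon},\mathbb{R}^3)$; these converge to $\mathbf{u}$ strongly in $W^{1,2}(D_1,\mathbb{R}^3)$.

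The central claim is that $\sup_{X\in D_1}\mathrm{dist}(\mathbf{u}_\varepsilon(X),\SS)\to 0$ as $\varepsilon\to 0$. Indeed, for any $X\in D_1$ and any $Y\in B_\varepsilon(X)$ with $\mathbf{u}(Y)\in\SS$, one has $\mathrm{dist}(\mathbf{u}_\varepsilon(X),\SS)\le|\mathbf{u}_\varepsilon(X)-\mathbf{u}(Y)|$. Averaging the square over $Y\in B_\varepsilon(X)$, applying Jensen's inequality to the mollifier (using $\rho_\varepsilon\le C|B_\varepsilon|^{-1}$), and finally using the Poincar\'e inequality on the ball of radius $\varepsilon$ in $\mathbb{R}^2$, yields
\[
\mathrm{dist}(\mathbf{u}_\varepsilon(X),\SS)^2 \;\le\; C\int_{B_\varepsilon(X)}|\nabla \mathbf{u}|^2\,dY,
\]
with $C$ independent of $X$ and $\varepsilon$. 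By absolute continuity of the integral of $|\nabla\mathbf{u}|^2$, the right-hand side tends to zero uniformly in $X$ as $\varepsilon\to 0$.

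Consequently, for all sufficiently small $\varepsilon$ the image $\mathbf{u}_\varepsilon(D_1)$ lies in a tubular neighborhood of $\SS$ on which the nearest-point projection $\pi$ is smooth, so $\mathbf{v}_\varepsilon:=\pi\circ\mathbf{u}_\varepsilon\in C^\infty(D_1,\SS)$ (and in fact extends smoothly to a slightly larger disk, fitting the definition of $C^\infty(D_1,\SS)$ in the paper). Strong convergence $\mathbf{v}_\varepsilon\to \mathbf{u}$ in $W^{1,2}(D_1)$ follows from $|\mathbf{v}_\varepsilon-\mathbf{u}_\varepsilon|\le\mathrm{dist}(\mathbf{u}_\varepsilon,\SS)\to 0$ uniformly, the chain rule $\nabla\mathbf{v}_\varepsilon = d\pi(\mathbf{u}_\varepsilon)\nabla\mathbf{u}_\varepsilon$, dominated convergence, and the observation that for $p\in\SS$ the differential $d\pi(p)$ is orthogonal projection onto $T_p\SS$, which acts as the identity on $\nabla \mathbf{u}(X)$ for a.e.\ $X$ (since $|\mathbf{u}|^2\equiv 1$ gives $\nabla\mathbf{u}(X)\in T_{\mathbf{u}(X)}\SS$ a.e.). The main expected obstacle is the uniform distance estimate above, which exploits precisely the critical case of the Poincar\'e inequality; once it is in hand, the topological obstructions (e.g.\ the nontriviality of $\pi_2(\SS)$) that complicate Sobolev density theorems in other regimes are entirely bypassed.
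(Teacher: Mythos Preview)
Your argument is correct and self-contained. It differs from the paper's proof in two ways. First, to obtain a map defined on a slightly larger disk, the paper dilates the domain, setting $\mathbf u_r(X)=\mathbf u(X/r)$ on $D_r$ for $r>1$, whereas you extend by reflection across $\partial D_1$; both are harmless and preserve the $\SS$ constraint. Second, and more substantially, for the interior approximation the paper simply invokes the Schoen--Uhlenbeck density theorem (and Bethuel--Zhang) as a black box, while you reproduce that argument explicitly: mollify, use the critical-dimension Poincar\'e estimate to show the mollifications stay uniformly close to $\SS$, then project. So the two proofs share the same underlying mechanism, but the paper's is a two-line appeal to the literature, while yours unpacks the cited result and is therefore more informative about \emph{why} density holds in the borderline case $p=n=2$.
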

\begin{proof} For $\mathbf u \in W^{1,2}(D_1, \mathbb S^2)$ and $r>1$, let $\mathbf u_r(X) = \mathbf u(X/r),\, X \in D_r.$
Then  $\mathbf u_r \in W^{1,2}(D_r, \mathbb S^2)$, its restriction,
$\hat{\mathbf  u}_r$, to $D_1$ is in $W^{1,2}(D_1, \mathbb S^2)$,
and $\hat {\mathbf u}_r \to\mathbf u$ in $W^{1,2}(D_1, \mathbb S^2)$
as $r \to 1$. Now from   \cite[\S 4]{schoenu} (see also \cite{betz})
it follows that there is a smooth function
$$\mathbf v_r \in C^\infty(D_r,\SS) \text{ such that  }\|\mathbf v_r-\mathbf u_r\|_{W^{1,2}(D_r, \mathbb S^2)} \leq r-1.$$
Hence $\|\hat{\mathbf v}_r-{\mathbf u}\|_{W^{1,2}(D_1, \mathbb S^2)}
\to 0 \text{ as } r \to 1$ and, since $\hat{\mathbf v}_r \in
C^\infty(D_1,\SS)$, the proof is complete.
\end{proof}
Now when $\nn \in W^{1,2}(D_1,\SS)$ put
\begin{equation}\label{aelita19}
    \Phi(X):=\mathbf n(X)\cdot(\partial_1\mathbf n(X)\times
    \partial_2\mathbf n(X)),
\end{equation}
 and note that $\Phi\in L^1(D_1,\SS)$ and
\begin{equation}\label{aelita20}
    \int_{D_1}|\Phi|\, dX \leq \frac{1}{2}
    \int_{D_1}|\nabla\mathbf n|^2\, dX.
\end{equation}
Since $\partial_i\mathbf n,\,i=1,2$, are orthogonal to the unit
vector $\mathbf n$ and the vector field $\partial_1\mathbf
n\times\partial_2\mathbf n$ is parallel to $\mathbf n$, it follows
that
$$
\Phi=\text{sign~}(\Phi)\, |\partial_1\mathbf
n\times\partial_2\mathbf n|.
$$
Thus the area on  $\SS$   of the image under $\nn$  of an area
element $dX$ of $D_1$ is
\begin{equation}\label{aelita21}dS=|\partial_1\mathbf
n\times\partial_2\mathbf n|\,dX, \text{ and hence
}\text{meas}\big(\mathbf n(D_1)\big)\leq     \int_{D_1}|\Phi|\, dX.
\end{equation}

\subsection{Main Results}
 A corollary (see Section \ref{inna}) of Theorem \ref{aelita22}, is that \eqref{inna3}-\eqref{inna6} in H\'elein's Conjecture hold when hypothesis \eqref{inna1} is replaced \eqref{aelita23}, which is weaker. 

 \begin{theorem}\label{aelita22}
Suppose $\mathbf n \in W^{1,2}(D_1, \mathbb S^2)$ satisfies
\begin{equation}\label{aelita23}
    \int_{D_1}|\partial_1 \mathbf n\times \partial_2 \mathbf n|\, dX\leq
    4\pi-\delta.
\end{equation}
Then  there exist $\Omega_i\in L^2(D_1)$ with
$\|\Omega_i\|_{L^2(D_1)}\leq (2^3\pi/\delta)\|\nabla \mathbf
n\|_{L^2(D_1)}$, $i = 1,2$, and for every $\zeta\in
C^\infty_0(D_1)$,
\begin{equation}\label{aelita24}
%\begin{split}
\int_{D_1} \Phi\,\zeta\,
dX=\int_{D_1}(\Omega_2\,\partial_1\zeta-\Omega_1\,\partial_2\zeta)\,
dX.
%\end{split}
\end{equation}
 In particular, for $\delta$ in \eqref{aelita23} and an absolute constant $c$,
\begin{equation}\label{aelita25}
    \|\Phi\|_{W_0^{-1,2}(D_1)}\leq \frac{c}{\delta}\|\nabla\mathbf n\|_{L^2(D_1)}.
\end{equation}
\end{theorem}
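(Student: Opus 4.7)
My strategy is to exploit that the area 2-form $dA$ on $\mathbb S^2$ is exact away from any one puncture. The hypothesis \eqref{aelita23} is an area condition: by the area formula,
\[
\int_{\mathbb S^2}\#\mathbf n^{-1}(\boldsymbol\xi)\,dA(\boldsymbol\xi)=\int_{D_1}|\partial_1\mathbf n\times\partial_2\mathbf n|\,dX\le 4\pi-\delta,
\]
so the ``missed set'' $E=\{\boldsymbol\xi\in\mathbb S^2:\#\mathbf n^{-1}(\boldsymbol\xi)=0\}$ has area $|E|\ge\delta$. For any $\mathbf q\in E$ the pull-back $\mathbf n^*dA=\Phi\,dX_1\wedge dX_2$ inherits a primitive from $\mathbb S^2\setminus\{\mathbf q\}$; writing $\mathbf n^*\omega=\Omega_1\,dX_1+\Omega_2\,dX_2$ will give both the divergence-form representation \eqref{aelita24} and, with the right $\mathbf q$, the $L^2$ bound behind \eqref{aelita25}.

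Concretely, rotate the target so that $\mathbf q$ corresponds to the south pole $-\mathbf k$. Stereographic projection from $-\mathbf k$ produces the primitive $\omega=(\xi_1\,d\xi_2-\xi_2\,d\xi_1)/(1+\xi_3)$ of $dA$ on $\mathbb S^2\setminus\{-\mathbf k\}$; a short computation using $\xi_1^2+\xi_2^2+\xi_3^2=1$ shows $d\omega=dA$. Pulling back gives
\[
\Omega_j(X)=\frac{n_1\,\partial_j n_2-n_2\,\partial_j n_1}{1+n_3},\qquad j=1,2,
\]
and $\partial_1\Omega_2-\partial_2\Omega_1=\Phi$ pointwise wherever $n_3>-1$. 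Testing against $\zeta\in C_0^\infty(D_1)$ and integrating by parts yields \eqref{aelita24}. The elementary identity $n_1^2+n_2^2=1-n_3^2$ and Cauchy--Schwarz give the key pointwise estimate
\[
|\Omega_j|^2\le\frac{1-n_3}{1+n_3}\,|\partial_j\mathbf n|^2,
\]
which is singular only as $\mathbf n\to-\mathbf k$.

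The main obstacle is turning this pointwise estimate into \eqref{aelita25}: $(1+n_3)$ must be bounded below on $D_1$. The ideal is to find $\mathbf q$ at the centre of a spherical cap $B_\rho(\mathbf q)\subset E$ of area proportional to $\delta$; then $1+n_3\ge 1-\cos\rho\approx\delta/(2\pi)$ and $(1-n_3)/(1+n_3)\le 4\pi/\delta$, which comfortably gives \eqref{aelita25} (in fact with $\delta^{-1/2}$ rather than $\delta^{-1}$). The difficulty is that $|E|\ge\delta$ alone does not supply such a cap, since an open subset of $\mathbb S^2$ of measure $\delta$ can be arbitrarily thin. I would first approximate $\mathbf n$ by smooth maps via Lemma~\ref{smooth}, so $\mathbf n(D_1)$ is a compact rectifiable set, and then invoke a covering/selection argument---of the geometric measure-theoretic flavour the authors use in Section~\ref{krit000}---to find $\mathbf q\in E$ together with the required cap; the generous stated constant $2^3\pi/\delta$ leaves ample room for a crude such selection.

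Once $\mathbf q$ is fixed, the argument closes. The identity \eqref{aelita24} established for smooth $\mathbf n$ extends to $\mathbf n\in W^{1,2}(D_1,\mathbb S^2)$ by Lemma~\ref{smooth} and lower semicontinuity of the $L^2$ norms of $\Omega_i$. Finally \eqref{aelita25} is a one-line duality: for $\zeta\in C_0^\infty(D_1)$ with $\|\nabla\zeta\|_{L^2(D_1)}\le 1$, \eqref{aelita24} and Cauchy--Schwarz yield
\[
\Big|\int_{D_1}\Phi\,\zeta\,dX\Big|\le\|\Omega_1\|_{L^2(D_1)}+\|\Omega_2\|_{L^2(D_1)}\le\frac{c}{\delta}\,\|\nabla\mathbf n\|_{L^2(D_1)}.
\]
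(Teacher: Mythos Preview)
Your framework is the same as the paper's: pull back the area form through a primitive with a single puncture, use the hypothesis \eqref{aelita23} to locate the ``missed set'' $E=\mathbb S^2\setminus\mathbf n(D_1)$ of measure $\ge\delta$, and place the puncture in $E$. Your primitive and pointwise bound
\[
|\Omega_j|\le\sqrt{\tfrac{1-n_3}{1+n_3}}\,|\partial_j\mathbf n|
\]
are (up to the choice of pole) exactly the paper's $|\omega_i(\mathbf n,\mathbf n')|\le 2|\partial_i\mathbf n|/|\mathbf n-\mathbf n'|$.

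The genuine gap is the step you yourself flag: finding a single $\mathbf q\in E$ with a cap $B_\rho(\mathbf q)\subset E$ of area comparable to $\delta$. No covering or selection argument can deliver this with $\rho$ depending only on $\delta$. For smooth $\mathbf n$ the complement $E$ is open, but it may be a thin annulus of width $\epsilon$ and circumference $\sim\delta/\epsilon$; then $|E|\ge\delta$ while the largest inscribed ball has radius $\sim\epsilon$, which can be made as small as you like. Hence $\inf_{D_1}(1+n_3)$ is not bounded below in terms of $\delta$, and your $L^2$ bound on $\Omega_j$ would depend on the particular (smooth approximation of) $\mathbf n$, wrecking the limit.

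The paper's resolution is not to select one $\mathbf q$ but to \emph{average} the primitives over all of them. Take any compact $K\subset E\setminus\{\pm\mathbf k\}$ with $\text{meas}(K)\ge\delta$ and set
\[
\Omega_i(X)=\frac{1}{\text{meas}(K)}\int_K \omega_i(\mathbf n,\mathbf n')(X)\,dS_{\mathbf n'}.
\]
Each $\omega_i(\cdot,\mathbf n')$ satisfies $\partial_2\omega_1-\partial_1\omega_2=\Phi$ on all of $D_1$ (since $\mathbf n(X)\neq\mathbf n'$ everywhere), so the same holds for $\Omega_i$. The pointwise bound and the elementary identity $\int_{\mathbb S^2}|\boldsymbol s-\boldsymbol s'|^{-1}\,dS_{\boldsymbol s'}=4\pi$ then give, \emph{uniformly in $\mathbf n$},
\[
|\Omega_i(X)|\le\frac{2}{\text{meas}(K)}\int_{\mathbb S^2}\frac{dS_{\mathbf n'}}{|\mathbf n(X)-\mathbf n'|}\,|\partial_i\mathbf n(X)|=\frac{8\pi}{\text{meas}(K)}\,|\partial_i\mathbf n(X)|\le\frac{8\pi}{\delta}\,|\partial_i\mathbf n(X)|.
\]
This averaging over the missed set---rather than a geometric selection of a single good puncture---is the missing idea.
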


\begin{remark}\label{aelita26} Since
$ |\Phi|\leq |\partial_1\mathbf n||\partial_2\mathbf n|\leq
\frac{1}{2}|\nabla\mathbf n|^2, $ condition \eqref{aelita23}  is
satisfied if \eqref{inna1}, the hypothesis of H\'elein's
Conjecture, holds, but not vice versa.\qed
\end{remark}

To investigate what can be said when there is no restriction on the
energy of $\mathbf n$ except that it is finite, let $\mathcal
A\subset \mathbb S^2$ and $  \mathcal F:=\nn^{-1}(\mathcal A) =
\big\{X\in D_1:\, \mathbf n(X)\in \mathcal A\big\}.$

\begin{theorem}\label{aelita30}
If $\mathbf n \in W^{1,2}(D_1, \mathbb S^2)$ and $\mathcal A\subset
\mathbb S^2$ is Borel with positive measure $\mu$,
 there exist $\Omega_i\in L^2(D_1),\,i=1,2$, such that
for all $\zeta\in C^\infty_0(D_1)$,
\begin{gather}\label{aelita31}
\int_{D_1} \Phi\,\zeta\, dX=\frac{4\pi}{\mu}\int_{\mathcal
F}\Phi\,\zeta\,
dX+\int_{D_1}(\Omega_2\,\partial_1\zeta-\Omega_1\,\partial_2\zeta)\,
dX,\\
\intertext{and $\|\Omega_i\|_{L^2(D_1)}\leq
{c}\,\mu^{-1/2}\,\|\nabla \mathbf n\|_{L^2(D_1)}$, where  $c$ is an
absolute constant.
 Thus}
\big\|\Phi-\frac{4\pi}{\mu}\chi_{_{\mathcal
F}}\Phi\big\|_{W^{-1,2}(D_1)} \leq \frac{c}{\mu^{1/2}}\,\|\nabla
\mathbf n\|_{L^2(D_1)}. \label{aelita32}
\end{gather}
\end{theorem}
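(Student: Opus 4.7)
The plan is to realize \eqref{aelita31} as the pull-back under $\mathbf n$ of a Hodge-type identity on $\SS$. Set $h(\boldsymbol\xi):=1-(4\pi/\mu)\chi_{\mathcal A}(\boldsymbol\xi)$; since $\int_{\SS}h\,dS=0$, the 2-form $h\,dS$ is exact on $\SS$. If I can produce a 1-form $\alpha$ on $\SS$ with $d\alpha=h\,dS$, then writing the pull-back as $\mathbf n^*\alpha=-\Omega_1\,dX_1-\Omega_2\,dX_2$ on $D_1$ gives
\[
d(\mathbf n^*\alpha)=\mathbf n^*(h\,dS)=h(\mathbf n)\Phi\,dX_1\wedge dX_2=\Bigl(\Phi-\tfrac{4\pi}{\mu}\chi_{\mathcal F}\Phi\Bigr)dX_1\wedge dX_2,
\]
and integration by parts against $\zeta\in C_0^\infty(D_1)$ produces \eqref{aelita31}. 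Since the pointwise bound $|\Omega_i(X)|\le\|\alpha\|_{L^\infty(\SS)}|\partial_i\mathbf n(X)|$ yields $\|\Omega_i\|_{L^2(D_1)}\le\|\alpha\|_{L^\infty(\SS)}\|\nabla\mathbf n\|_{L^2(D_1)}$, the whole task reduces to producing $\alpha$ with $\|\alpha\|_{L^\infty(\SS)}\le c\,\mu^{-1/2}$.

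I would build $\alpha$ by averaging the explicit 1-form appearing in the proof of Theorem \ref{aelita22}. For each $\boldsymbol\xi_0\in\SS$, in polar coordinates $(\phi,\theta)$ centered at $\boldsymbol\xi_0$, set $\alpha^{\boldsymbol\xi_0}:=-(1+\cos\phi)\,d\theta$, so that $d\alpha^{\boldsymbol\xi_0}=dS$ classically on $\SS\setminus\{\boldsymbol\xi_0\}$ and $d\alpha^{\boldsymbol\xi_0}=dS-4\pi\,\delta_{\boldsymbol\xi_0}$ distributionally on $\SS$, with $|\alpha^{\boldsymbol\xi_0}(\boldsymbol\xi)|=\cot(d(\boldsymbol\xi,\boldsymbol\xi_0)/2)\le c/d(\boldsymbol\xi,\boldsymbol\xi_0)$ where $d$ is geodesic distance. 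Averaging,
\[
\alpha(\boldsymbol\xi):=\frac{1}{\mu}\int_{\mathcal A}\alpha^{\boldsymbol\xi_0}(\boldsymbol\xi)\,dS(\boldsymbol\xi_0),
\]
one obtains $d\alpha=dS-(4\pi/\mu)\chi_{\mathcal A}\,dS=h\,dS$, as required. The $L^\infty$ bound follows by spherical symmetric-decreasing rearrangement: for each fixed $\boldsymbol\xi$, $\boldsymbol\xi_0\mapsto 1/d(\boldsymbol\xi,\boldsymbol\xi_0)$ is radially decreasing on $\SS$, hence
\[
\int_{\mathcal A}\frac{dS(\boldsymbol\xi_0)}{d(\boldsymbol\xi,\boldsymbol\xi_0)}\;\le\;\int_{B(\boldsymbol\xi,\rho_\mu)}\frac{dS(\boldsymbol\xi_0)}{d(\boldsymbol\xi,\boldsymbol\xi_0)}\;\le\;c\,\sqrt\mu,
\]
where $\rho_\mu$ is determined by $|B(\boldsymbol\xi,\rho_\mu)|=\mu$; dividing by $\mu$ yields $\|\alpha\|_{L^\infty(\SS)}\le c\,\mu^{-1/2}$.

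The main technical obstacle is justifying the pull-back identity $d(\mathbf n^*\alpha)=\mathbf n^*(h\,dS)$ when $\mathbf n$ is only $W^{1,2}$ and $\alpha$ has bounded but discontinuous coefficients (jumping across $\partial\mathcal A$). This is exactly where a corollary of Federer's area formula \cite[Thm.\,3.2.22]{federer} enters: for Sobolev maps $\mathbf n$ it gives the change-of-variables identity
\[
\int_{D_1}g(\mathbf n(X))\,\Phi(X)\,dX=\int_{\SS}\biggl(\sum_{X\in\mathbf n^{-1}(\boldsymbol\xi)}\mathrm{sign}(\Phi(X))\,g(X)\biggr)dS(\boldsymbol\xi),
\]
which identifies $\mathbf n^*(\chi_{\mathcal A}\,dS)$ with $\chi_{\mathcal F}\Phi\,dX$ and controls the contribution of preimages of null sets, in particular $\partial\mathcal A$. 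Combined with smooth approximation of $\mathbf n$ via Lemma \ref{smooth} and mollification of $\alpha$ on $\SS$, Federer's theorem allows one to verify \eqref{aelita31} along the approximating sequence and pass to the limit in the $L^2$ estimates; pushing this limiting argument through uniformly over arbitrary Borel $\mathcal A$ of positive measure, and in particular across the level-set structure of $\mathbf n$ near $\partial\mathcal A$, is the delicate step.
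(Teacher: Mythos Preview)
Your proposal is correct in outline and uses exactly the same ingredients as the paper: the elementary 1-form $\alpha^{\boldsymbol\xi_0}=-(1+\cos\phi)\,d\theta$ is precisely the form whose pull-back the paper calls $\omega_i(\cdot,\mathbf n')$ in \eqref{leila25}; both average it over $\boldsymbol\xi_0\in\mathcal A$, both invoke Federer's area formula to turn the delta contributions into $\int_{\mathcal F}\Phi\zeta$, and both obtain the $\mu^{-1/2}$ bound by the same symmetric-rearrangement estimate on $\SS$. The genuine difference is the \emph{order} of operations: you average on $\SS$ first to get a single $L^\infty$ 1-form $\alpha$ with $d\alpha=h\,dS$ and then pull back once, whereas the paper pulls back $\alpha^{\mathbf n'}$ for each fixed $\mathbf n'$ and only then averages, via Fubini, on $D_1$.

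The paper's order is precisely what resolves the obstacle you flag. For each $\mathbf n'$ in the generic set $\mathcal Q_N$ of Lemma~\ref{krit6}, the pulled-back form $\omega_i(\cdot,\mathbf n')$ is smooth on $D_1$ away from finitely many nondegenerate preimages, so integration by parts is classical and the point contribution $4\pi\sum_\ell\zeta(A_\ell(\mathbf n'))\,\text{sign}\,\Phi(A_\ell(\mathbf n'))$ is computed by an explicit boundary integral on shrinking circles (Lemma~\ref{krit25}); Corollary~\ref{krit4} then converts its $\mathbf n'$-average into $\int_{\mathcal F_N}\Phi\zeta$, and one lets $N\to\infty$. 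Your alternative of mollifying $\alpha$ on $\SS$ can be made to work, but watch one subtlety your sketch skips: pointwise a.e.\ convergence $\alpha_j\to\alpha$ on $\SS$ does not immediately give $\alpha_j\circ\mathbf n\to\alpha\circ\mathbf n$ a.e.\ on $D_1$, because $\mathbf n^{-1}$ of a Lebesgue-null set in $\SS$ need not be Lebesgue-null in $D_1$ (it is only $|\Phi|\,dX$-null, which suffices for the $\Phi$-term but not obviously for the $\Omega_i$-term). Swapping to the paper's order --- pull back each $\alpha^{\mathbf n'}$ first, then integrate over $\mathbf n'\in\mathcal A$ by Fubini --- is the cleanest way around this, and the resulting $\Omega_i$ coincide with yours.
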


\begin{remark}
If $\{\mathcal A_j:1\leq j \leq N\}$ is a family of mutually disjoint subsets of $\SS$, each with measure $ \mu_j$, the
 corresponding family $\{\mathcal F_j:1\leq j \leq N\}$ of their inverse images under $\nn$
 are  mutually disjoint in $D_1$ and, by \eqref{aelita31}, for each $j$ there exist $\Omega^j_i\in L^2(D_1),\,i=1,2$, such that, for all $\zeta\in C^\infty_0(D_1)$,
\begin{equation*} 4\pi \int_{\mathcal F_j}\Phi\,\zeta\,
dX =
\mu_j\left( \int_{D_1} \Phi\,\zeta\, dX - \int_{D_1}(\Omega^j_2\,\partial_1\zeta-\Omega^j_1\,\partial_2\zeta)\,
dX\right).
\end{equation*}
 Now let $\mu = \sum_{j=1}^N \mu_j$,  $\mathcal A = \cup_{j=1}^N \mathcal A_j$,         $\mathcal F = \cup_{j=1}^N \mathcal F_j$, and sum  over $j$ to obtain
\begin{align*}4\pi \int_{\mathcal F}\Phi\,\zeta\,
dX&= \mu\left( \int_{D_1} \Phi\,\zeta\, dX - \sum_{j=1}^N\left(\frac{\mu_j}{\mu}\int_{D_1}(\Omega^j_2\,\partial_1\zeta-\Omega^j_1\,\partial_2\zeta)\,
dX\right)\right) \\
&= \mu\left( \int_{D_1} \Phi\,\zeta\, dX - \int_{D_1}(\widetilde\Omega_2\,\partial_1\zeta-\widetilde \Omega_1\,\partial_2\zeta)\,
dX\right),
\end{align*}
where
$$
\widetilde\Omega_i = \sum_{j=1}^N\left(\frac{\mu_j}{\mu}\right)\, \Omega^j_i,\quad i= 1,2.
$$
Thus, $\widetilde \Omega_i$ satisfies \eqref{aelita31} when  $\mathcal A = \cup_{j=1}^N \mathcal A_j$ and $\mathcal A_j \cap \mathcal A_k = \emptyset$, $j \neq k$.
For example,
 $$\sum_{j=1}^N\mu_j\int_{D_1}(\Omega^j_2\,\partial_1\zeta-\Omega^j_1\,\partial_2\zeta)\,
dX  = 0 \text{  for all } \zeta\in C^\infty_0(D_1) \text{  if }\cup_{j=1}^N \mathcal A_j = \SS,$$ since $\mu = 4\pi$ and,  by \eqref{aelita31}, $$ \int_{D_1}(\Omega_2\,\partial_1\zeta-\Omega_1\,\partial_2\zeta)\,
dX = 0 \text{  for all $\zeta\in C^\infty_0(D_1)$ when $\mathcal A = \SS$}.\quad \Box$$
\end{remark}
\begin{remark}\label{holo}
Theorem \ref{aelita30} implies  that  the pre-image of every subset
$\mathcal A \subset\SS$  of positive measure, however small,
contains significant information about the singularity of $\Phi$
(reminiscent of holography, when any fragment, however small, of a
glass plate which contains a holographic image, contains the entire
image). To see that Theorem \ref{aelita30} is a generalisation of
Theorem \ref{aelita22}, let $\mathcal A = \SS\setminus \nn (D_1)$ so
that $\mathcal F = \emptyset$ and $\chi_{_{\mathcal F}} = 0$ on
$D_1$.\qed\end{remark}

Recall from  \eqref{aelita20} that  $\Phi$ is bounded in $L^1(D_1)$
in terms of the energy $E(\nn)$.
 However,   sequences $\{{\mathbf n}_k\}$  of vector fields,
 with $\{E ({\bf n}_k)\}$  bounded  but for which the
corresponding sequence  $\{\Phi_k\}$ is  not bounded in
$W^{-1,2}(D_1)$, are discussed in Section \ref{niza}.  The first
example to illustrate Theorems \ref{aelita22} and \ref{aelita30}
involves
 the Enneper hyperbolic surface for which
singularities arises as a result of branch point formation. The
second involves  stereographic projection onto a sphere, where
singularity are associated with   bubble formation. Curiously, the
two examples have the same Gauss map, and in that sense there is
only one example.

\section{H\'elein's Conjecture when $\boldsymbol{ n = 3}$}
\label{asol}

The idea  underlying the proof of Theorem \ref{aelita22}  is, for a
given $\nn \in W^{1,2}(D_1, \SS)$, to
 write $\Phi$ in  weak divergence form
\begin{equation}\label{asol1}
 \Phi=\partial_2 \omega_1-\partial_1\omega_2,
\end{equation}
and establish appropriate estimates.
 Since such a representation on the whole disc $D_1$ is not possible, even when the corresponding  $\nn$ is smooth because the resulting $\omega_i$ may have
strong singularities, the task is limited to showing that an
appropriate representation is possible under the hypotheses of
Theorem \ref{aelita22}.  Since, by Lemma \ref{smooth},  every
$\mathbf n \in W^{1,2}(D_1,\SS)$ can be approximated in
$W^{1,2}(D_1,\SS)$ by a sequence  $\{\mathbf n_k\}$ of vector fields in
$C^\infty(D_1,\SS)$, it suffices to prove the theorem for smooth
$\nn$ satisfying \eqref{aelita23}.

\subsection{Construction of $\boldsymbol \omega_i$}\label{asol2}

The representations of a vector field  $\nn:D_1 \to \SS$  by
Cartesian coordinates and
 spherical coordinates are related as follows:
\begin{equation*}\label{leila8}
    \mathbf n=(n_1,n_2,n_3),\quad n_1=\cos\varphi\sin
    \vartheta,\quad
    n_2=\sin\varphi\sin\vartheta, \quad  n_3=\cos\theta,
\end{equation*}
$\theta\in [0,\pi)$, and $\varphi\in (0,2\pi]$. Here $\nn=
(n_1,n_2,n_3), \vartheta, \varphi$ are functions of $X \in D_1$.
Then formal partial differentiation yields, the relation
coordinates
\begin{align*}
\partial_i \mathbf n&= \left(
\begin{array}{c}
\displaystyle { \cos \vartheta\cos\varphi}
\\ \displaystyle {\cos\vartheta\sin\varphi
 }   \\\displaystyle {-\sin \vartheta
 }
  \end{array}
\right)^\top\,\partial_i\vartheta+ \left(
\begin{array}{c}
\displaystyle {- \sin \vartheta\sin\varphi}
\\ \displaystyle {\sin\vartheta\cos\varphi
 }   \\\displaystyle {0 }
  \end{array}
\right)^\top\,\partial_i\varphi,
\\
\intertext{whence}
\partial_1 \mathbf n\times \partial_2\mathbf
n&=(\partial_1\vartheta\,\partial_2\varphi-
\partial_2\vartheta\,\partial_1\varphi)
 \left(
\begin{array}{c}
\displaystyle { \cos \vartheta\cos\varphi}
\\ \displaystyle {\cos\vartheta\sin\varphi
 }   \\\displaystyle {-\sin \vartheta
 }
  \end{array}
\right)^\top\times \left(
\begin{array}{c}
\displaystyle {- \sin \vartheta\sin\varphi}
\\ \displaystyle {\sin\vartheta\cos\varphi
 }   \\\displaystyle {0 }
  \end{array}
\right)^\top\\&~\\
%\end{equation*}
%\begin{equation*}
&=(\partial_1\vartheta\,\partial_2\varphi-
\partial_2\vartheta\,\partial_1\varphi)
 \left|
\begin{array}{ccc}
\displaystyle { \boldsymbol i} &\displaystyle{\boldsymbol j} & \displaystyle{\boldsymbol k} \\
\displaystyle { \cos \vartheta\cos\varphi} &\displaystyle{
\cos\vartheta\sin\varphi} &\displaystyle{-\sin\vartheta}
\\ \displaystyle {-\sin\vartheta\sin\varphi} &
\displaystyle{\sin\vartheta\cos\varphi}&\displaystyle{0
 }
  \end{array}
\right|\\
%\end{align*}
%\begin{equation*}
&=\sin\vartheta\,(\partial_1\vartheta\,\partial_2\varphi-
\partial_2\vartheta\,\partial_1\varphi)\,
\mathbf n.
\end{align*}
Then since
\begin{align*}
\mathbf n\cdot(\partial_1\mathbf n\times
    \partial_2\mathbf n)&=\sin\vartheta\,(\partial_1\vartheta\,\partial_2\varphi-\partial_2\vartheta\,\partial_1\varphi)
\\
&=\partial_2(\cos\vartheta+1)\,\partial_1\varphi-
\partial_1(\cos\vartheta+1)\,\partial_2\varphi
\\&= \partial_2\big((\cos \vartheta +1)\partial_1 \varphi\big) - \partial_1\big((\cos \vartheta +1)\partial_2 \varphi\big),
\end{align*}
and
$$
\cos\vartheta + 1=n_3 + 1,\quad
\partial_i\varphi=\frac{n_1\partial_i
n_2-n_2\partial_i n_1}{n_1^2+n_2^2},
$$
there emerges a formula which apparently gives $\Phi$  in the
divergence form \eqref{asol1}
\begin{multline}\label{leila10}
\mathbf n\cdot(\partial_1\mathbf n\times
    \partial_2\mathbf n)=\partial_2\Big\{\frac{n_3 +1}{
n_1^2+n_2^2}(n_1\partial_1 n_2-n_2\partial_1 n_1)\Big\} \\-
\partial_1\Big\{\frac{n_3+1}{
n_1^2+n_2^2}(n_1\partial_2 n_2-n_2\partial_2 n_1)\Big\}\\=
\partial_2\omega_1 - \partial_1\omega_2, ~ \text{say}. \qquad\qquad\qquad\quad
\end{multline}
However, since $n_3 = \pm \sqrt{n_1^2 +n_2^2}$  on $\SS$,
\begin{equation}\label{leila10b}
(n_1,n_2,n_3) \mapsto \frac{n_3+1}{n_1^2+n_2^2}, \in \RR,\quad
(n_1,n_2,n_3) \in \SS,
\end{equation}
is real-analytic on $\SS$ except where
$(n_1,n_2,n_3)=(0,0,1)=:\boldsymbol k$. It follows that when $\nn:
D_1 \to \SS$ is smooth,  $\omega_i,i=1,2,$ in \eqref{leila10} is
smooth where $\mathbf n(X)\neq\boldsymbol k$, but there may be
singularities where  $\nn(X) =\boldsymbol k$. Thus \eqref{leila10}
may not hold in the sense of distributions on $D_1$ if
$\mathbf n(X) =\boldsymbol k, X \in D_1$. The following remarks are
key to  overcoming this difficulty.
\begin{remark}
 (i) Since for $\boldsymbol s =(s_1,s_2,s_3) \in \SS$, $|s_3|\neq 1$,
$$
\left|\frac{(s_3+1)s_i}{ s_1^2+s_2^2}\right| \leq
\frac{2|s_i|}{{ s_1^2+s_2^2}}  \leq \frac{
2}{\sqrt{s_1^2+s_2^2}}, \quad i=1,2,
$$
 $\omega_i \in L^p(\mathbb S^2)$  for $p<2$, if $|\nabla\nn| \in L^\infty(D_1)$.

(ii) Since  rotation of  the Cartesian  system in which $\mathbb S$ is
embedded changes the location of the poles of $\SS$,  M\"uller \&
S\u{v}er\'ak \cite{sverakmuller}  showed that singularities in
\eqref{leila10} can be dealt with   by integrating over a set of
rotated coordinates.

(iii)
 For $ (\cos\varphi\sin
    \vartheta, \sin\varphi\sin\vartheta, \cos\theta) = \boldsymbol s \in \SS$, and $\boldsymbol k = (0,0,1)$,
\begin{align}\int_{\boldsymbol s \in\SS}\frac{ dS_{\boldsymbol s}}{|\boldsymbol s - \boldsymbol k|} = \int_0^\pi \frac{2\pi\sin \theta}{2\sin(\theta/2)} ~d\theta=  2\pi\int_0^\pi \cos (\theta/2) \,d\theta = 4\pi.
\qquad \Box\label{pint}\end{align}
\end{remark}
Here the approach is similar to \cite{sverakmuller}, except that in
what follows the field $\mathbf n$ is rotated instead of the
coordinate system. To see that this is possible without changing
$\Phi$, let $\mathbf U$ be  a rotation matrix (a $3\times 3$ orthogonal matrix with determinant 1), the
transpose of the columns  of which form an orthonormal basis
$U_i,\, i= 1,2,3$, for $\mathbb R^3$ with
$$U_1= U_2\times U_3,\quad U_2= U_3\times U_1,\quad U_3= U_1\times U_2.
$$
Then, for a given $\mathbf n:D_1\to \SS$, let   $(\mathbf U \mathbf
n)(X) = \mathbf U (\mathbf n(X))\in \SS,\,X \in D_1$, and put
\begin{equation*}\label{leila05}
\mathbf m(X)=(\mathbf U \mathbf n)(X) = n_1(X) U_1+ n_2(X) U_2 +
n_3(X) U_3, \quad X\in D_1.
\end{equation*}
 It follows that
\begin{multline*}
\partial_1\mathbf m\times \partial _2\mathbf m =
(\partial_1 n_2\partial_2 n_3-\partial_2 n_2\partial_1n_3)U_1+
(\partial_1 n_3\partial_2 n_1-\partial_2 n_3\partial_1n_1)U_2\\+
(\partial_1 n_1\partial_2 n_2-\partial_2 n_1\partial_1n_2)U_3=
\mathbf U(\partial_1\mathbf n\times\partial_2\mathbf n),
\end{multline*}
and therefore, since $\mathbf U$ is orthogonal,
\begin{equation*}\label{leila25a}
\mathbf m\cdot(\partial_1 \mathbf m\times \partial_2\mathbf
m)=(\mathbf U \mathbf n)\cdot\big(\mathbf U(\partial_1 \mathbf
n\times
\partial_2\mathbf n)\big) = \mathbf n\cdot(\partial_1 \mathbf n\times
\partial_2\mathbf n)=\Phi.
\end{equation*}
Therefore, replacing   $\mathbf n$ with $\mathbf m$ in  formula
\eqref{leila10}
 yields
\begin{equation}\label{leila11}
\Phi = \mathbf n\cdot(\partial_1 \mathbf n\times \partial_2\mathbf
n)=
\partial_2 W_1 -\partial_1 W_2,
\end{equation}
where $W_i =  W_i(\nn,\mathbf U)$ is a function of $X$ given by
\begin{equation}\label{leila12}
    W_i(\nn,\mathbf U)=\frac{m_3+1}{m_1^2 +m_2^2}\Big(m_1\partial_i
    m_2-m_2\partial_i m_1\Big).
\end{equation}

Clearly, as with \eqref{leila10b},  the singularities  of $W_i$
occur at points of $D_1$ where $\mathbf m(X)= \pm\boldsymbol k$,
equivalently where $\mathbf n(X)= \mathbf U^{-1}\boldsymbol k$.
Therefore  \eqref{leila11} holds pointwise at $X \in D_1$ only  if
$\mathbf m(X)\neq \boldsymbol k$. In \eqref{leila12}, $(m_1,m_2,m_2)
= \mathbf m :D_1\to \SS$ depends on $\nn:D_1 \to \SS$ and on the
rotation matrix $\mathbf U$.

The next step is to parameterize a  suitably  family of rotation
matrices. So   for fixed $\bn' \in \mathbb S^2_0  = \SS \setminus
\{\boldsymbol k,-\boldsymbol k\}$,  let
\begin{equation}\label{leila14}
    \bn'=(n'_1,n'_2,n'_3),~ n'_1=\cos\varphi'\sin
    \vartheta',~
    n'_2=\sin\varphi'\sin\vartheta', ~  n'_3=\cos\vartheta',
\end{equation}
where $\cos\vartheta'\neq \pm1$, and
 let $\bf U(\bn')$ denote the rotation matrix the transpose (equivalently the inverse) of which is
\begin{align}\label{leila15}
\mathbf U(\bn')^\top&=
 \left(
\begin{array}{ccc}
\displaystyle { \cos\vartheta'\cos\varphi'} &
\displaystyle{-\sin\varphi'} & \displaystyle{\sin\vartheta'\cos\varphi'} \\
\displaystyle { \cos \vartheta'\sin\varphi'} &\displaystyle{
\cos\varphi'} &\displaystyle{\sin\vartheta'\sin\varphi'}
\\ \displaystyle {-\sin\vartheta'} &
\displaystyle{0}&\displaystyle{\cos\vartheta'
 }
  \end{array}
\right)
\\
&=\left(
\begin{array}{ccc}
\displaystyle { {\lambda'}^{-1/2}\, n'_1n'_3} &
\displaystyle{- {\lambda'}^{-1/2}\, n'_2} & \displaystyle{n'_1} \\
\displaystyle { {\lambda'}^{-1/2}\, n'_2n'_3} &\displaystyle{
{\lambda'}^{-1/2}\, n'_1} &\displaystyle{n'_2}
\\ \displaystyle {-{\lambda'}^{1/2}} &
\displaystyle{0}&\displaystyle{n'_3
 }
  \end{array}
\right),\notag
\end{align}
when  $\lambda':= {n'_1}^2+{n'_2}^2 = 1-{n_3'}^2\neq 0$. Thus  $\bn'
\mapsto \mathbf U (\bn')$ in \eqref{leila15} depends real analytically on
$(n'_1,n'_2,n'_3) ={\bn'}\in \mathbb S^2_0$.   Now, for  ${\bn'} \in
\mathbb S_0^2$ and ${\bn \in\SS}$, let
\begin{subequations}\label{leila16}
\begin{align} \mathbf U(\bn')\bn  &= {\boldsymbol  m} = (m_1,m_2,m_3) \in \SS
\\ \intertext{and define $\Gamma:\mathbb S^2\times \mathbb
S^2_0\times\mathbb R^3\to \mathbb R$  by}\notag
    \Gamma(\bn, \bn', \boldsymbol \xi)&=\frac{m_3+1}{m_1^2+m_2^2}
    \Big(m_1\big(\mathbf U(\bn')\boldsymbol \xi\big)_2-m_2\big(\mathbf U(\bn')\boldsymbol \xi\big)_1\Big) \\&=\frac{1}{1-m_3}
    \Big(m_1\big(\mathbf U(\bn')\boldsymbol \xi\big)_2-m_2\big(\mathbf U(\bn')\boldsymbol \xi\big)_1\Big),
\end{align}
\end{subequations}
where $\Gamma(\bn, \bn', \cdot):\RR^3 \to \RR$ is linear for fixed
$({\bn, \bn'}) \in \mathbb S^2\times \mathbb S^2_0$.

For fixed $\bn' \in \mathbb S^2_0$ and smooth $\mathbf n: D_1
\to \SS$, put $\mathbf m(X) = \mathbf U(\bn')(\nn(X))$  and note that $\partial_i
\mathbf m(X)) = \partial_i \big(\mathbf U(\bn')\nn\big)(X) = \mathbf
U(\bn')\big(\partial_i\nn(X)\big)$. Then, by \eqref{leila12} and
\eqref{leila16},
\begin{multline}\label{leila25}
 \omega_i(\nn, \bn')(X): = W_i(\nn, {\bf U}(\bs n'))(X) =\Gamma(\mathbf n(X), \bn', \partial_i
    \mathbf n(X))\\
=\frac{m_3(X)+1}{m_1^2(X)+m_2^2(X)}\Big(m_1(X)\partial_i
m_2(X)-m_2(X)\partial_i m_1(X)\Big)\\
=\frac{1}{1-m_3(X)}\Big(m_1(X)\partial_i
m_2(X)-m_2(X)\partial_i m_1(X)\Big),
 ~{i=1,2},
    \end{multline}
    when $\bs m \neq  \pm \bs k$.
The proof of Theorem \ref{aelita22} depends on estimates of $\Gamma$
and $\omega_i$ in terms on $\bn'\in \SS_0$ and $\nn:D_1 \to \SS$ in
the next section.

\subsection{Estimates of $\boldsymbol \Gamma$ and $\boldsymbol{\omega_i,\,i=1,2}$}\label{estimates}
\begin{definition}\label{leiladefinition}  Let
\begin{equation*}\label{leila18}
 \Sigma = \{(\bn, \bn') \in \mathbb S^2 \times \mathbb S^2_0: \bn \neq \bn'\}.
\qquad\qquad \qquad\qquad \Box\end{equation*}
\end{definition}

\begin{lemma}\label{leila19} The function $\Gamma$
in \eqref{leila16} is real-analytic on $\Sigma\times \mathbb R^3$
and
\begin{equation}\label{leila20}
|\Gamma(\bn, \bn', \boldsymbol \xi)|\leq \frac{2|\boldsymbol
\xi|}{|\bn-\bs n'|}\, \text{~for all~} (\bn,\bn')\in \Sigma,~
\boldsymbol \xi \in \mathbb R^3.
\end{equation}
\end{lemma}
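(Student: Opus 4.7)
The plan is to verify real-analyticity and the pointwise bound by unwinding the definitions and exploiting the isometry $\mathbf{U}(\bn')$.

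\textbf{Real-analyticity.} From \eqref{leila15}, the entries of $\mathbf{U}(\bn')$ are rational functions of $(n'_1,n'_2,n'_3)$ with denominator $\sqrt{\lambda'}=\sqrt{1-(n'_3)^2}$, which is real-analytic and non-vanishing on $\mathbb S^2_0$. Hence $\mathbf{m}=\mathbf{U}(\bn')\bn$ and $\mathbf{U}(\bn')\boldsymbol\xi$ depend real-analytically on $(\bn,\bn',\boldsymbol\xi)$. Only the factor $1/(1-m_3)$ can fail; I note that $m_3=1$ exactly when $\mathbf{U}(\bn')\bn=\boldsymbol k$, equivalently $\bn=\mathbf{U}(\bn')^\top\boldsymbol k$. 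Reading off the third column of $\mathbf{U}(\bn')^\top$ in \eqref{leila15} gives $\mathbf{U}(\bn')^\top\boldsymbol k=(n'_1,n'_2,n'_3)=\bn'$. Therefore the singular locus of $\Gamma$ is precisely $\{\bn=\bn'\}$, which is excluded from $\Sigma$, and real-analyticity follows.

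\textbf{The pointwise bound.} By Cauchy–Schwarz applied to the two-dimensional vectors $(m_1,m_2)$ and $((\mathbf U(\bn')\boldsymbol\xi)_2,-(\mathbf U(\bn')\boldsymbol\xi)_1)$,
\begin{equation*}
|\Gamma(\bn,\bn',\boldsymbol\xi)|\le\frac{\sqrt{m_1^2+m_2^2}\,|\mathbf U(\bn')\boldsymbol\xi|}{1-m_3}
=\frac{\sqrt{1-m_3^2}\,|\boldsymbol\xi|}{1-m_3}=\sqrt{\frac{1+m_3}{1-m_3}}\,|\boldsymbol\xi|,
\end{equation*}
using $m_1^2+m_2^2=1-m_3^2$ and the orthogonality of $\mathbf U(\bn')$. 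Next I convert the denominator to a chordal distance: since $|\boldsymbol k|=|\boldsymbol m|=1$,
\begin{equation*}
|\boldsymbol m-\boldsymbol k|^2=2-2m_3=2(1-m_3),
\end{equation*}
so $\sqrt{(1+m_3)/(1-m_3)}=\sqrt{2(1+m_3)}/|\boldsymbol m-\boldsymbol k|\le 2/|\boldsymbol m-\boldsymbol k|$. Finally, because $\mathbf U(\bn')$ is an isometry and $\mathbf U(\bn')\bn'=\boldsymbol k$ (the inverse of the identity proved above), $|\boldsymbol m-\boldsymbol k|=|\mathbf U(\bn')(\bn-\bn')|=|\bn-\bn'|$. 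Combining gives \eqref{leila20}.

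\textbf{Main obstacle.} There is no serious technical obstacle; the whole argument rests on the single clean identity $\mathbf U(\bn')\bn'=\boldsymbol k$, which both identifies the singular set in $\Sigma$ and produces the chordal factor $|\bn-\bn'|$ after passing through $|\boldsymbol m-\boldsymbol k|$. The only place one must be careful is in confirming that the third column of $\mathbf U(\bn')^\top$ in \eqref{leila15} really equals $\bn'$ in the Cartesian parametrisation \eqref{leila14}; once that is observed, both assertions fall out of orthogonality and elementary algebra.
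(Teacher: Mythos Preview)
Your proof is correct and follows essentially the same approach as the paper: both identify the singular set via $\mathbf U(\bn')^{-1}\boldsymbol k=\bn'$, then bound $\Gamma$ by Cauchy--Schwarz and convert $1-m_3$ into the chordal distance $|\boldsymbol m-\boldsymbol k|=|\bn-\bn'|$. The only cosmetic difference is that the paper passes through the trigonometric half-angle identity $\frac{\cos\theta+1}{\sin\theta}=\cot(\theta/2)$ where you use the equivalent algebraic manipulation $\sqrt{(1+m_3)/(1-m_3)}=\sqrt{2(1+m_3)}/|\boldsymbol m-\boldsymbol k|$.
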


\begin{proof}

 Since
$ \bn'\mapsto
   \mathbf U(\bn') $ and   $(\bn, \bn')\mapsto \mathbf U(\bn')\bn = \boldsymbol m = (m_1,m_2,m_3)\in \mathbb S^2$ are real analytic on $\SS\times \SS_0\times \mathbb R^3$,  $\Gamma$ in \eqref{leila16}  will be real analytic on $\Sigma \times \RR^3$ if
$$\displaystyle(\bn, \bn') \mapsto \frac{m_3+1}{m_1^2+m_2^2}= \frac{1}{1-m_3},\text{  since }m_1^2+m_2^2+m_3^2 =1,$$
is real-analytic on $\Sigma$.
So it suffices to observe that $m_3 \neq 1$ when $(\bn,\bn') \in\Sigma$.  This is because, by \eqref{leila14} and \eqref{leila15},
$\mathbf U(\bn')^{-1}(\boldsymbol k)=\bn'$ and hence, since $\bn
\neq \bn'$,
\begin{equation}\label{leila23}
  0<  |\bn'-\bn|= |\mathbf U(\bn')^{-1} (\boldsymbol k-\mathbf U(\bn')\bn)|=|\boldsymbol k-\mathbf U(\bn')\bn|
    =|\boldsymbol k
    -\boldsymbol m|.
\end{equation}
Now,
 when $(\bn, \bn') \in \Sigma$
and $\boldsymbol m \neq \boldsymbol \pm k$, it follows   from \eqref{leila16}(b),
\begin{align*}\big|\Gamma(\bn,\bn',\boldsymbol \xi)\big| &=
\left|
    \frac{m_3+1}{m_1^2+m_2^2}
    \Big(m_1\big(\mathbf U(\bn')\boldsymbol \xi\big)_2-m_2\big(\mathbf U(\bn')\boldsymbol \xi\big)_1\Big)\right|
\\
&\leq \left(
    \frac{m_3+1}{m_1^2+m_2^2}\right)
    \sqrt{m_1^2+m_2^2}\,\big|\mathbf U(\bn') \xi\big| =
    \frac{m_3+1}{\sqrt{m_1^2+m_2^2}}
    \,|\boldsymbol \xi|  \\&=
    \frac{\cos \theta +1}{\sin \theta}\,|\boldsymbol \xi| = \frac{\cos (\theta/2)}{\sin (\theta/2)}|\boldsymbol \xi| \leq \frac{|\boldsymbol \xi|}{\sin (\theta/2)}\\& =  \frac{2|\boldsymbol \xi|}{|\boldsymbol k
    -\boldsymbol m|} = \frac{2|\boldsymbol \xi|}{|\bn'-\bn|},
\end{align*}
where, in spherical polar coordinates, $m_1 =\cos\phi\sin
    \theta,\,
    m_2=\sin\phi\sin\theta,\, m_3=\cos\theta$, $\theta \in (0,\pi)$. This shows \eqref{leila20} and  completes the proof.
    \end{proof}

\begin{definition}\label{gauss}
For  $\bn' \in \SS_0$ and smooth $\nn:D_1 \to \SS$, let
$$Z(\nn,\bn') = \{X \in D_1: \nn(X) \neq \bn'\}= \{X\in D_1: (\nn(X),\bn')
\in \Sigma\}.\qquad \Box$$
\end{definition}

\begin{lemma}\label{leila26} For fixed $\bn' \in \SS_0$ and smooth $\nn:D_1 \to \SS$, the functions $\omega_i(\nn, \bn'),\,i=1,2$, in \eqref{leila25}  are infinitely differentiable
at $X \in Z(\nn,\bn')$ where
\begin{equation}\label{leila27}
\Phi=\partial_2\omega_1(\nn, \bn')-\partial
    _1\omega_2(\nn, \bn') \text{ and }
    |\omega_i(\nn, \bn')(X)|\leq \frac{2|\partial_i \mathbf n(X)|}{|\mathbf n(X)-\bn'|}.
    \end{equation}
\end{lemma}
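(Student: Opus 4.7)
The proof is essentially an assembly of the ingredients that are already in place: Lemma~\ref{leila19} (smoothness and pointwise bound for $\Gamma$), the definition \eqref{leila25} of $\omega_i$, and the computation in Section~\ref{asol2} that produced \eqref{leila11}.

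First I would handle smoothness. Since $\nn$ is continuous, the set $Z(\nn,\bn')$ is open in $D_1$. On $Z(\nn,\bn')$ the pair $(\nn(X),\bn')$ lies in the set $\Sigma$ of Definition~\ref{leiladefinition}, and by Lemma~\ref{leila19} the function $\Gamma$ is real-analytic on $\Sigma\times\RR^3$. Because $\omega_i(\nn,\bn')(X)=\Gamma(\nn(X),\bn',\partial_i\nn(X))$ and $\nn$ is smooth with $\partial_i\nn$ smooth, the composition is infinitely differentiable on $Z(\nn,\bn')$.

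Next I would establish the divergence identity. Fix $X_0\in Z(\nn,\bn')$. With $\mathbf m = \mathbf U(\bn')\nn$ one has $\mathbf m(X_0) \neq \boldsymbol k$ by \eqref{leila23}, so by continuity this remains true in a neighbourhood of $X_0$. On that neighbourhood the formula \eqref{leila25} coincides with \eqref{leila12}, and the formal computation carried out from the beginning of Section~\ref{asol2} through equation \eqref{leila11} applies verbatim to $\mathbf m$ (replacing $\nn$), since it uses only the chain rule and $|\mathbf m|=1$, both of which are licit away from $\mathbf m = \boldsymbol k$. That derivation yields the pointwise identity $\Phi = \partial_2 W_1(\nn,\mathbf U(\bn')) - \partial_1 W_2(\nn,\mathbf U(\bn'))$, which is exactly $\Phi = \partial_2\omega_1(\nn,\bn') - \partial_1\omega_2(\nn,\bn')$ at $X_0$; here I also use that $\Phi$ is preserved by the rotation, as recorded just before \eqref{leila11}.

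Finally, the pointwise estimate is immediate from Lemma~\ref{leila19}: apply \eqref{leila20} with $\boldsymbol\xi=\partial_i\nn(X)$ to get
\[
|\omega_i(\nn,\bn')(X)|=|\Gamma(\nn(X),\bn',\partial_i\nn(X))|\leq \frac{2|\partial_i\nn(X)|}{|\nn(X)-\bn'|}\qquad \text{on } Z(\nn,\bn').
\]

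There is no serious obstacle: the one point requiring a little care is the justification that the formal manipulations of Section~\ref{asol2} are genuine pointwise identities precisely on $Z(\nn,\bn')$, and this is handled by the continuity argument above, together with the fact, noted in Lemma~\ref{leila19}, that $\mathbf U(\bn')^{-1}\boldsymbol k = \bn'$ so $\mathbf m(X)=\boldsymbol k$ if and only if $\nn(X)=\bn'$.
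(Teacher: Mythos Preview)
Your smoothness argument and the pointwise estimate are fine and match the paper. There is, however, a gap in your derivation of the divergence identity on all of $Z(\nn,\bn')$.

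You assert that the formal computation of Section~\ref{asol2} ``uses only the chain rule and $|\mathbf m|=1$, both of which are licit away from $\mathbf m=\boldsymbol k$.'' But that computation passes through spherical coordinates $(\vartheta,\varphi)$, and $\varphi$ is undefined at \emph{both} poles. Consequently the derivation, as written, only yields \eqref{leila11} pointwise where $\mathbf m(X)\neq\pm\boldsymbol k$, i.e.\ where $\nn(X)\neq\pm\bn'$. Points with $\nn(X)=-\bn'$ lie in $Z(\nn,\bn')$ and are not covered by your argument, yet the lemma claims the identity there as well. The paper addresses exactly this: on the level set $L=\{X:\nn(X)=-\bn'\}$ it notes that $\partial_i\nn=0$ almost everywhere on $L$, hence $\Phi=0$ and $\partial_j\omega_i=0$ almost everywhere on $L$, so the identity holds a.e.\ on $Z(\nn,\bn')$; smoothness of both sides then upgrades this to everywhere.

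An alternative patch, closer in spirit to yours, is a continuity argument: near a point $X_0$ with $\mathbf m(X_0)=-\boldsymbol k$ one has $m_3\neq 1$, so both $\Phi$ and $\partial_2\omega_1-\partial_1\omega_2$ (written via $\frac{1}{1-m_3}$) are smooth; the identity holds on the open dense set where $\mathbf m\neq\pm\boldsymbol k$ (or trivially if $\mathbf m\equiv-\boldsymbol k$ nearby), and continuity extends it to $X_0$. Either way, the case $\nn(X)=-\bn'$ must be treated, and your proposal does not do so.
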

\begin{proof}
For fixed $\bn'\in \SS_0$, the mapping  $X\to \mathbf n(X)$ takes
$Z(\nn,\bn')$ to $\mathbb S^2\setminus \{\bn'\}$ and since $
(\mathbb S^2\setminus \{\bn'\})\times \{\bn'\}\subset \Sigma, $ it
follows from Lemma \ref{leila19} that $\Gamma(\bn, \bn', \boldsymbol
\xi)$ is real analytic with respect to $(\bn,\boldsymbol\xi)$ at
$(\nn(X), \boldsymbol \xi_0)$, when $X \in Z(\nn, \bn')$ and
$\boldsymbol \xi_0\in \RR^3$ is arbitrary. Therefore, since $\nn$ is
infinitely differentiable on $D_1$, it follows from \eqref{leila25}
that  the functions $\omega_i(\cdot, \bn')$, $i=1,2$,  are infinitely
differentiable on $Z(\nn,\bn')$.  Since,
by \eqref{leila12},
$
\omega_i(\nn, \bn')(X) = W_i(\nn, \mathbf U(\bn'))(X),
$
 where $ \mathbf
m (X)= \mathbf U(\bn')\mathbf n(X)$ in the definition of $W_i$, and  $\mathbf m(X)=\pm \boldsymbol k$ if and only if $\mathbf
n(X)= \pm \bn'$m, because
$\mathbf U^{-1}(\bn')\boldsymbol k=\bn'$ by  \eqref{leila15}, it follows that
\begin{align}\notag
\Phi(X) &= \partial_2W_1(\nn,{\bf U}(\bn '))(X) - \partial_1 W_2(\nn,{\bf U}(\bn '))(X) \text{ when }\mathbf m(X)\neq \pm\boldsymbol k,\\
&= \partial_2\omega_1(X,\bs) - \partial_1 \omega_2(X,\bn), \quad\text{ when }\nn(X) \neq \pm \bn'.\label{leila30}
\end{align}
Thus the equality in \eqref{leila27} follows from \eqref{leila11} and
\eqref{leila12} when $\nn(X) \neq \pm \bn'$
 and, since $X \in Z(\nn,\bs n')$, it remains only   to consider the case when $\nn(X) = -\bs n'$.

Let $L$ denote the level set $\{X \in D_1: \nn(X) = -\bs n'\}$, at every point of which $\omega_i(\cdot ,
\mathbf n')$, $i=1,2,$ is infinitely differentiable because $L$ is a compact subset of $Z(\nn, \bn')$. Moreover,
$\omega_i(\cdot, \mathbf n')$ is zero almost everywhere on $L$ because $\partial_i \mathbf n$  is zero almost everywhere on  $L$. Now recall that if  a function is
infinitely differentiable in a neighborhood of  a set of positive
measure and is zero on that set, then the derivative of the
function is zero almost everywhere  on the set. It follows that
$\partial_j\omega_i(X,\mathbf n')=0$, $i,j=1,2$, almost everywhere on $L$, and the same conclusion can be drawn for
the function $\Phi$.

Hence the equality  \eqref{leila30} holds almost everywhere
on $L$. When combined with the original version of
\eqref{leila30} it follows that \eqref{leila30} holds  almost everywhere on $Z(\nn, \bn')$. Since both
sides of this equality are smooth on this set,
\eqref{leila30} holds everywhere on $Z(\nn, \bn')$.
 The inequality  \eqref{leila27} now follows from \eqref{leila20} and the proof is complete.
\end{proof}
%To proceed, recall that if a function is
%infinitely differentiable in a neighborhood of  the points in a set of positive
%measure, and the function is constant on that set, then its derivatives are zero almost everywhere on that set. In particular,
%$\partial_i \mathbf n = 0$ almost everywhere on $Z(\nn,\bn')$, and so by \eqref{aelita19}, $\Phi=0$
%almost everywhere on $Z(\nn,\bn')$. Similarly, by \eqref{leila16} and \eqref{leila25}, $\omega_i(\cdot, \bn')=0$ on $Z(\nn,\bn')$, and hence   $\partial_j\omega_i(\cdot ,\bn')$, $i,j=1,2$,  are zero almost %everywhere on $Z(\nn,\bn')$.

%Thus $\Phi(X)=\partial_2 \omega_1(X,\bn')-\partial_1 \omega_2(
%X,\bn')$  almost everywhere
%on $Z(\nn,\bn')$,
%With
%eqref{leila30} this shows  that the equality in \eqref{leila27} holds almost everywhere
%on the set $D_1\setminus Z(\nn,\bn')$. Since both
%sides are smooth on $D_1\setminus Z(\nn,\bn')$,
%the equality in \eqref{leila27} is established.

Lemma \ref{leila26} concerns  smoothness of  $\omega_i$ at   $X \in
Z(\nn, \bs n')$  for fixed $\bn'$. The next lemma deals with  their joint smoothness
with respect to  $X$ and $\bn'$.

\begin{lemma}\label{leila60}
Suppose that $\nn:D_1 \to \SS$ is  smooth, that $
K\subset \mathbb S^2_0$ and   $  E\subset \Sigma$ are compact, and that $G \subset D_1$ is such  that
\begin{equation*}\label{leila61}
    (\mathbf n(X), \bn')\in   E\text{ for all } X\in
      G \text{ and } \bn'\in   K.
\end{equation*}
Then there is a neighborhood $  O_  K$ of $  K$ such that the
functions $\omega_i$, $i=1,2,$ are infinitely
differentiable with respect to  $(X,\bn')$ on $  G\times O_K$.
\end{lemma}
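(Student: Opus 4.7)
The plan is to produce an open neighborhood $O_K$ of $K$ in $\SS_0$ small enough that $(\mathbf n(X),\bn') \in \Sigma$ for every $(X,\bn') \in G \times O_K$, and then invoke the real-analyticity of $\Gamma$ on $\Sigma \times \RR^3$ established in Lemma \ref{leila19} together with the smoothness of $\mathbf n$ on $D_1$.

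First I would use the compactness hypothesis on $E$. Since $\Sigma$ is open in $\SS \times \SS_0$ (which is in turn open in $\SS \times \SS$), and $E \subset \Sigma$ is compact, there exists $\rho>0$ such that the $\rho$-neighborhood of $E$ with respect to the product metric on $\SS \times \SS$ is contained in $\Sigma$. Shrinking $\rho$ if necessary, use the compactness of $K$ in the open set $\SS_0 = \SS \setminus \{\pm \bs k\}$ to guarantee that the $\rho$-neighborhood of $K$ in $\SS$ still avoids $\pm \bs k$. Now set
\[
O_K := \{\bn' \in \SS : \operatorname{dist}(\bn',K) < \rho\},
\]
an open neighborhood of $K$ contained in $\SS_0$.

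Next I would verify that the composition lands in $\Sigma$. Given any $X \in G$ and any $\bn' \in O_K$, pick $\bn'_0 \in K$ with $|\bn'-\bn'_0| < \rho$. By the standing hypothesis $(\mathbf n(X),\bn'_0) \in E$, so the pair $(\mathbf n(X), \bn')$ lies at product-metric distance less than $\rho$ from the point $(\mathbf n(X),\bn'_0) \in E$, and therefore $(\mathbf n(X),\bn') \in \Sigma$. This gives an inclusion
\[
\{(\mathbf n(X),\bn') : X \in G,\ \bn' \in O_K\} \subset \Sigma.
\]

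To conclude, recall from \eqref{leila25} that $\omega_i(\mathbf n, \bn')(X) = \Gamma(\mathbf n(X), \bn', \partial_i \mathbf n(X))$. By Lemma \ref{leila19}, $\Gamma$ is real-analytic on $\Sigma \times \RR^3$; by hypothesis, $\mathbf n$ and $\partial_i \mathbf n$ are $C^\infty$ on $D_1$; and by the step above, the arguments $(\mathbf n(X),\bn',\partial_i \mathbf n(X))$ remain in the domain of real-analyticity for $(X,\bn') \in G \times O_K$. The chain rule then gives that $\omega_i \in C^\infty(G \times O_K)$ jointly in $(X,\bn')$, as claimed. The only mild subtlety to watch is the separation of $O_K$ from the poles $\pm \bs k$ (where $\mathbf U(\bn')$ degenerates), but this is automatic from compactness of $K$ inside the open set $\SS_0$; no quantitative estimate is needed, only openness.
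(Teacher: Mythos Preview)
Your proof is correct and follows essentially the same route as the paper: use compactness of $E\subset\Sigma$ to produce a neighborhood of $K$ on which the composed map $(X,\bn')\mapsto(\mathbf n(X),\bn',\partial_i\mathbf n(X))$ stays in the domain where $\Gamma$ is real-analytic, and conclude by the chain rule. The paper additionally bounds $|\partial_i\mathbf n|$ on $D_1$ and works with the compact set $V=E\times B\subset\Sigma\times\RR^3$, but since $\Gamma$ is already analytic on all of $\Sigma\times\RR^3$ your more direct construction of $O_K$ via a $\rho$-neighborhood is equally valid and slightly cleaner.
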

\begin{proof}
Since $\mathbf n$ is smooth on $D_1$,  which is compact,
$|\partial_i\mathbf n|\leq b$ on $  D_1$  for some $b<\infty$ . Let $
B=\{\boldsymbol \xi: |\boldsymbol \xi| \leq b,\,\boldsymbol \xi
\in\mathbb R^3\}$ and  $  V=  E\times
  B$. Since $  V$ is compact in $ \Sigma\times \mathbb R^3$, by
Lemma \ref{leila19}, the function $\Gamma$ is real-analytic on a
neighborhood $  O_  V$ of $  V$. Moreover, the mapping
$$
(X, \bn')\to (\mathbf n(X), \bn', \partial_i\mathbf n(X))
$$
is infinitely differentiable on $D_1\times \SS_0$ and maps $ G\times
K$ into $  V$. The  smoothness of  $\omega_i$ on $  G\times O_{  K}$
follows from
 \eqref{leila25}.
\end{proof}
\subsection{Proof of Theorem \ref{aelita22}} \label{asol3}
As noted at the beginning of Section \ref{asol}, it suffices to
consider smooth
 maps $X\mapsto \mathbf n(X)$ which takes
the disc $D_1$ into the unit sphere  $\mathbb S^2$  and, by
hypothesis  \eqref{aelita23}, satisfy $\text{meas}\,  \big( \mathbf
n(D_1)\big)<4\pi-\delta$ (see \eqref{aelita21}).

Let $  A= \nn(D_1)\cup\{\pm \boldsymbol k\}\subset\mathbb S^2$. Then
$\text{meas}\,(\mathbb S^2\setminus   A)>\delta$, since
$\text{meas}\,(\mathbb S^2)= 4\pi$ and $\text{meas}\,(
A)<4\pi-\delta$.
 Hence there is a compact $K \subset \mathbb S^2\setminus   A$
with $\text{meas}\,( K)\geq\delta$.
 Since $D_1$ is compact and $\nn$ is continuous on $D_1$,  $\nn(D_1)$ is compact
and, for some  $\sigma>0$ independent of $\bn'\in K$ and  of $X\in
D_1$,
\begin{equation*}\label{leila13}
|\bn'\pm\boldsymbol k|>\sigma\text{ and }   |\mathbf n(X)-\bn'|>
\sigma\text{ for all } X\in D_1
    \text{ and  }\bn'\in   K,
\end{equation*}
since (see Definition \ref{gauss}), $Z(\nn, \bn')= D_1$ for all
$\bn'\in   K$.  From  Lemma \ref{leila60} with $  G=D_1$ and $E =
D_1\times K$, it follows that there is a neighborhood $  O_ K$
of  $  K$ such that $\omega_i(\nn, \bn'),\,i=1,2$, are infinitely
differentiable on $D_1\times   O_  K$. Therefore by
Lemma \ref{leila26}
\begin{align}\label{leila65}
\Phi(X)=\partial_2\omega_1 (\nn, \bn')(X)-\partial_1\mathbf
\omega_2(\nn, \bn')(X),~ X\in D_1,~ \bn'\in   K,
\\
\intertext{and}\notag |\omega_i(\nn, \bn')(X)|\leq \frac{2}{|\mathbf
n(X)-\bn'|}|\partial_i \mathbf n(X)|,~ X\in D_1,~\bn'\in   K,
\end{align}
where $|\mathbf n(X)-\bn'|\geq \sigma>0$, $(X, \bn') \in D_1\times
  K$. Now for $X \in D_1$ let
\begin{equation*}\label{leila66}
    \Omega_i(X)=\frac{1}{\text{meas}\,(  K)}\int_{
    K}\omega_i(\nn, \bn')\, dS_{\bn'},
\end{equation*}
where the integration over $K$ is with respect to the measure on
$\SS$. Then  \eqref{leila27} yields the estimate
\begin{align*}
    |\Omega_i(X)|&\leq \frac{2}{\text{meas}\,(  K)}\left(\int_{
    K}\frac{dS_{\bn'}}{|\mathbf n(X)-\bn'|}\right)\,|\partial_i \mathbf n(X)|\\
&\leq \frac{2}{\text{meas}\,(  K)}\left(\int_{
    \SS}\frac{dS_{\bn'}}{|\mathbf n(X)-\bn'|}\right)\,|\partial_i \mathbf n(X)|\\& =
\frac{8\pi}{\text{meas}\,(  K)}\,|\partial_i \mathbf n(X)| \leq
\frac{8\pi}{\delta}\,\,|\partial_i \mathbf n(X)|,
\end{align*}
which gives
\begin{equation*}
    \|\Omega_i\|_{L^2(D_1)}\leq  \frac{8\pi}{\delta}\,\,\|\partial_i \mathbf
    n\|_{L^2(D_1)}.
\end{equation*}
Multiplying   \eqref{leila65} by $\zeta \in C_0(D_1)$ and
integrating with respect to $\bn'$ over   $K \subset \SS$ yields
\eqref{aelita24}. This completes the proof. \qed

\subsection{Proof of H\'elein's Conjecture, $\bn \bf = \bf 3$}\label{inna}

In this section    {an improved version of H\'elein's Conjecture for $n=3$ (see page \pageref{inna2}), with hypothesis \eqref{inna1} replaced by \eqref{inna1+}, is deduced from Theorem \ref{aelita22}} using a
continuation argument similar to that  in \cite{helein}.

\textbf{Step 1. A Priori Bounds.}\\
The first observation is similar to that of Section \ref{isothermal}.
\begin{lemma}
Suppose, for  smooth $\mathbf n: D_1\to \mathbb S^2$, there exist an
orthonormal frame $(\mathbf e_1, \mathbf e_2)$ were the $\mathbf
e_i$ are smooth on $D_1$ and orthogonal to $\mathbf n$, and
$(\mathbf e_1, \mathbf e_2, \mathbf n)$ has positive orientation.
Suppose also that  $f$ is smooth and satisfy \eqref{inna4} and
\eqref{inna5}. Then
\begin{equation}\label{aelita16}
-\Delta f =\mathbf n\cdot (\partial_1\mathbf n\times
\partial_2\mathbf n)\text{~in~} D_1, \quad
f=0\text{~on~}\partial D_1,
\end{equation}and
\begin{equation}\label{aelita17}\begin{split}
    \partial_1\mathbf e_1=-\partial_2 f\, \mathbf e_2-(\mathbf
    e_1\cdot \partial_1\mathbf n)\,\mathbf n,\qquad
\partial_2\mathbf e_1=\partial_1 f\, \mathbf e_2-(\mathbf
    e_1\cdot \partial_2\mathbf n)\,\mathbf n,\\
\partial_1\mathbf e_2=\partial_2 f\, \mathbf e_1-(\mathbf
    e_2\cdot \partial_1\mathbf n)\,\mathbf n,\qquad
\partial_2\mathbf e_2=-\partial_1 f\, \mathbf e_1-(\mathbf
    e_2\cdot \partial_2\mathbf n)\,\mathbf n.
\end{split}\end{equation}

\end{lemma}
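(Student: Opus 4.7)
My plan is to treat the two conclusions of the lemma separately, observing that both reduce to a direct unpacking of the orthonormality relations together with the hypotheses \eqref{inna4} and \eqref{inna5}. The fact that $(\mathbf e_1,\mathbf e_2,\mathbf n)$ forms a positively oriented orthonormal frame means $\mathbf n=\mathbf e_1\times\mathbf e_2$ and $\{\mathbf e_1,\mathbf e_2,\mathbf n\}$ is a pointwise orthonormal basis of $\RR^3$, which I will exploit throughout.

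For \eqref{aelita17}, the first step is to expand each $\partial_j\mathbf e_i$ in the basis $(\mathbf e_1,\mathbf e_2,\mathbf n)$. Differentiating $\mathbf e_i\cdot\mathbf e_j=\delta_{ij}$ gives $\partial_k\mathbf e_i\cdot\mathbf e_i=0$ and $\partial_k\mathbf e_1\cdot\mathbf e_2=-\mathbf e_1\cdot\partial_k\mathbf e_2$; differentiating $\mathbf e_i\cdot\mathbf n=0$ gives $\partial_k\mathbf e_i\cdot\mathbf n=-\mathbf e_i\cdot\partial_k\mathbf n$. Hence each expansion is reduced to identifying the single tangential coefficient $\partial_j\mathbf e_i\cdot\mathbf e_k$ (for $k\neq i$). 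I then plug in \eqref{inna4}: the identities $\partial_2\mathbf e_1\cdot\mathbf e_2=-\mathbf e_1\cdot\partial_2\mathbf e_2=\partial_1 f$ and $\partial_1\mathbf e_1\cdot\mathbf e_2=-\mathbf e_1\cdot\partial_1\mathbf e_2=-\partial_2 f$ immediately yield the two formulas for $\partial_j\mathbf e_1$ in \eqref{aelita17}, and the two formulas for $\partial_j\mathbf e_2$ follow by symmetry since $\partial_j\mathbf e_2\cdot\mathbf e_1=-\partial_j\mathbf e_1\cdot\mathbf e_2$.

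For \eqref{aelita16}, I would simply reproduce verbatim the calculation carried out in Section \ref{isothermal} in the derivation of \eqref{aelita11} from \eqref{Lapf}. The key input was precisely that $\mathbf n=\mathbf e_1\times\mathbf e_2$ with $(\mathbf e_1,\mathbf e_2,\mathbf n)$ orthonormal, and it was shown there that
\[
\partial_1\mathbf e_1\cdot\partial_2\mathbf e_2-\partial_2\mathbf e_1\cdot\partial_1\mathbf e_2
=(\mathbf n\cdot\partial_1\mathbf e_1)(\mathbf n\cdot\partial_2\mathbf e_2)-(\mathbf n\cdot\partial_2\mathbf e_1)(\mathbf n\cdot\partial_1\mathbf e_2)
=\mathbf n\cdot(\partial_1\mathbf n\times\partial_2\mathbf n),
\]
where the last equality used the expansion $\partial_i\mathbf n=-(\mathbf n\cdot\partial_i\mathbf e_1)\mathbf e_1-(\mathbf n\cdot\partial_i\mathbf e_2)\mathbf e_2$ coming from $\|\mathbf n\|^2=1$. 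Combining with \eqref{inna5} gives the PDE in \eqref{aelita16}, while the boundary condition $f=0$ on $\partial D_1$ is part of the hypothesis \eqref{inna5}.

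I do not expect any serious obstacle here: the lemma is an elementary algebraic translation of \eqref{inna4}--\eqref{inna5} via orthonormality, and the geometric content is already encoded in the Section \ref{isothermal} computation. The only point that deserves a line of comment is that \eqref{inna4} is exactly what is needed for the tangential components of $\partial_j\mathbf e_1$ (and hence, by antisymmetry, of $\partial_j\mathbf e_2$) to be expressed in terms of $\nabla f$; everything else is forced by $\mathbf e_i\cdot\mathbf e_j=\delta_{ij}$ and $\mathbf e_i\cdot\mathbf n=0$.
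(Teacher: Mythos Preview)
Your proposal is correct and follows essentially the same approach as the paper: expand each $\partial_j\mathbf e_i$ in the orthonormal basis $(\mathbf e_1,\mathbf e_2,\mathbf n)$, use the relations obtained by differentiating $\mathbf e_i\cdot\mathbf e_j=\delta_{ij}$ and $\mathbf e_i\cdot\mathbf n=0$ together with \eqref{inna4} to get \eqref{aelita17}, and combine the identity $\partial_1\mathbf e_1\cdot\partial_2\mathbf e_2-\partial_2\mathbf e_1\cdot\partial_1\mathbf e_2=\mathbf n\cdot(\partial_1\mathbf n\times\partial_2\mathbf n)$ with \eqref{inna5} to get \eqref{aelita16}. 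The only cosmetic difference is that the paper reproduces that identity inline (via \eqref{inna9}) rather than citing the Section~\ref{isothermal} computation, and treats \eqref{aelita16} before \eqref{aelita17}.
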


\begin{proof}
Since $\mathbf n\cdot\partial_i\mathbf n=\partial_i(\mathbf n\cdot
\mathbf e_j)=0$ for all $i,j$, it follows that
$$\partial_i \mathbf n=-( \mathbf n\cdot \partial_i\mathbf e_i)\mathbf e_i -(\mathbf n\cdot \partial_i\mathbf e_j)\mathbf e_j,~i \neq j,$$
and hence that
\begin{multline}\label{inna9}
 \partial_1\mathbf n \times \partial_2\mathbf n=\big[(\mathbf n\cdot \partial_1\mathbf e_1)\mathbf e_1+(\mathbf n\cdot \partial_1\mathbf e_2)\mathbf e_2\big]
 \times \big[(\mathbf n\cdot \partial_2\mathbf e_1)\mathbf e_1+(\mathbf n\cdot \partial_2\mathbf e_2)\mathbf e_2\big] \\
= \big((\mathbf n\cdot
\partial_1\mathbf e_1)(\mathbf n\cdot \partial_2\mathbf e_2)-
 (\mathbf n\cdot \partial_1\mathbf e_2)(\mathbf n\cdot \partial_2\mathbf e_1)\big) \mathbf n\,.
\end{multline}
Moreover,
$$
\partial_1\mathbf e_1\cdot \partial_2\mathbf e_2=(\mathbf n\cdot \partial_1\mathbf e_1)
(\mathbf n\cdot \partial_2\mathbf e_2),
$$
since
$$
\partial_1 \mathbf e_1=(\partial_1 \mathbf e_1\cdot\mathbf e_2)\mathbf e_2+
(\partial_1 \mathbf e_1\cdot\mathbf n)\mathbf n,\,\,\,
\partial_2 \mathbf e_2=(\partial_2 \mathbf e_2\cdot\mathbf e_1)\mathbf e_1+
(\partial_2 \mathbf e_2\cdot\mathbf n)\mathbf n,
$$
and similarly
$$
\partial_2\mathbf e_1\cdot \partial_1\mathbf e_2=(\mathbf n\cdot \partial_2\mathbf e_1)
(\mathbf n\cdot \partial_1\mathbf e_2).
$$
Substituting these observations into \eqref{inna9} gives
\begin{equation*}\label{inna10}
(\partial_1\mathbf n \times \partial_2\mathbf n)\cdot \mathbf
n=\partial_1\mathbf e_1\cdot \partial_2\mathbf e_2-\partial_1\mathbf
e_2\cdot \partial_2\mathbf e_1,
\end{equation*}
and \eqref{aelita16} follows from \eqref{inna5}. Next,
\begin{align*}
\partial_j\mathbf e_1&=(\partial_j \mathbf e_1\cdot \mathbf
e_2)\mathbf e_2+(\partial_j \mathbf e_1\cdot \mathbf n) \mathbf n,
&\partial_j\mathbf e_1\cdot\mathbf e_2&=-\mathbf
e_1\cdot\partial_j\mathbf e_2,
\\
\partial_j\mathbf e_2&=(\partial_j \mathbf e_2\cdot \mathbf
e_1)\mathbf e_1+(\partial_j \mathbf e_2\cdot \mathbf n) \mathbf n,
 &\partial_j\mathbf e_i\cdot\mathbf n~&=-\mathbf
e_i\cdot\partial_j\mathbf n,
\end{align*}
and \eqref{inna4} imply
  \eqref{aelita17}.\end{proof}

\begin{lemma}\label{inna12} For a smooth $\nn:D_1 \to \SS$  {which satisfies \eqref{aelita23}}, let  $(\mathbf e_1, \mathbf e_2, \mathbf n)$ be an orthonormal moving frame with
positive orientation which, together with a smooth function $f:D_1
\to \RR$,  satisfies  \eqref{inna4} and \eqref{inna5}.
 Then
\begin{gather}\label{inna13}
\|\mathbf e_{i}\|_{C^k(D_1)}+\|f\|_{C^k(D_1)}\leq C(k,\nn), \quad
k\geq 2,\\\label{inna14} \|\nabla\mathbf e_{i}\|_{L^2(D_1)}+\|\nabla
f\|_{L^2(D_1)}+ \|f\|_{L^\infty(D_1)}\leq c(\delta).
\end{gather}
Here $C(k,\nn)$ depends only on $k$ and $\mathbf n$, and
 $c(\delta)$ only on   {$\delta$ in
\eqref{aelita23}}.
\end{lemma}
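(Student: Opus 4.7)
The plan is to handle \eqref{inna13} and \eqref{inna14} separately: \eqref{inna13} is routine elliptic regularity given the smoothness of $\mathbf n$; the harder work is in \eqref{inna14}, where the $\|\nabla f\|_{L^2}$ bound comes directly from Theorem \ref{aelita22} but the $\|f\|_{L^\infty}$ bound requires a Wente-type compensated-compactness inequality. This last step is the main obstacle.

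For \eqref{inna13}, I would note that $\Phi \in C^\infty(D_1)$ since $\mathbf n$ is smooth, so by the preceding lemma $f$ solves the Dirichlet problem $-\Delta f = \Phi$, $f|_{\partial D_1}=0$, with right-hand side in $C^\infty$; classical Schauder theory for the disc then yields $\|f\|_{C^k(D_1)}\leq C(k,\mathbf n)$. Using \eqref{aelita17}, $\partial_j\mathbf e_i$ is expressed in $C^{k-1}$ in terms of $\mathbf e_i,\nabla f,\mathbf n,\nabla\mathbf n$; since $|\mathbf e_i|=1$ is bounded, a routine induction on $k$ then gives $\|\mathbf e_i\|_{C^k(D_1)}\leq C(k,\mathbf n)$.

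For the gradient part of \eqref{inna14}, I would invoke Theorem \ref{aelita22} to produce $\Omega_1, \Omega_2 \in L^2(D_1)$ with $\|\Omega_i\|_{L^2}\leq (8\pi/\delta)\|\nabla\mathbf n\|_{L^2}$ satisfying \eqref{aelita24}, then test $-\Delta f = \Phi$ against $f\in W^{1,2}_0(D_1)$ itself (after approximating $f$ by elements of $C^\infty_0(D_1)$):
\begin{equation*}
\|\nabla f\|_{L^2}^2 \;=\; \int_{D_1}\Phi\, f\,dX \;=\; \int_{D_1}(\Omega_2 \partial_1 f - \Omega_1 \partial_2 f)\,dX \;\leq\; \sqrt{2}\bigl(\|\Omega_1\|_{L^2}^2 + \|\Omega_2\|_{L^2}^2\bigr)^{1/2}\|\nabla f\|_{L^2},
\end{equation*}
which gives $\|\nabla f\|_{L^2}\leq (C/\delta)\|\nabla \mathbf n\|_{L^2}$. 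The pointwise estimate $|\nabla\mathbf e_i|^2\leq 2(|\nabla f|^2 + |\nabla\mathbf n|^2)$ read off from \eqref{aelita17} then bounds $\|\nabla\mathbf e_i\|_{L^2}$ similarly.

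The main obstacle is the $L^\infty$-bound on $f$, since the $L^2$-curl representation from Theorem \ref{aelita22} generically only forces $f\in BMO$, not $L^\infty$. To overcome this I would exploit the null-form (Jacobian) structure of $\Phi$. Expanding the triple product in Cartesian coordinates,
\begin{equation*}
\Phi \;=\; n_1 J(n_2, n_3) + n_2 J(n_3, n_1) + n_3 J(n_1, n_2),\qquad J(u,v) := \partial_1 u\,\partial_2 v - \partial_2 u\,\partial_1 v.
\end{equation*}
By the Coifman--Lions--Meyer--Semmes theorem each Jacobian $J(n_i,n_j)$ lies in the real Hardy space $\mathcal H^1(\mathbb R^2)$ with norm at most $C\|\nabla n_i\|_{L^2}\|\nabla n_j\|_{L^2}$, and multiplication by the bounded $W^{1,2}\subset\mathrm{VMO}$ factor $n_k$ preserves $\mathcal H^1$, so $\Phi\in\mathcal H^1(D_1)$. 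The classical Wente inequality on the disc (equivalently, Fefferman's $\mathcal H^1$--$BMO$ duality applied to $(-\Delta)^{-1}$ with zero Dirichlet data) then yields $\|f\|_{L^\infty(D_1)}\leq C\|\nabla\mathbf n\|_{L^2}^2$. Absorbing the $\|\nabla\mathbf n\|_{L^2}$-dependence and the $\delta^{-1}$-factor into one constant $c(\delta)$ completes \eqref{inna14}.
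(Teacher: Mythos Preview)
Your treatment of \eqref{inna13} and of the $L^2$-gradient bounds in \eqref{inna14} is correct and matches the paper's argument essentially verbatim.

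The gap is in your $L^\infty$ estimate for $f$. The claim that multiplication by the bounded $W^{1,2}$ factor $n_k$ preserves $\mathcal H^1$ is false: a function in $\mathcal H^1(\mathbb R^2)$ must have mean zero, and multiplying by a generic bounded smooth function destroys this. More decisively, your argument for $\|f\|_{L^\infty}$ never uses hypothesis \eqref{aelita23}; it would yield $\|f\|_{L^\infty}\leq C\|\nabla\mathbf n\|_{L^2(D_1)}^2$ for \emph{every} smooth $\mathbf n:D_1\to\mathbb S^2$. The Enneper example in Section~\ref{niza} refutes exactly that: there $\|\nabla\mathbf n_\varepsilon\|_{L^2(D_1)}^2\to 8\pi$ stays bounded while, by \eqref{niza7}, $f_\varepsilon(0)=\log\varepsilon^2-\log(1+\varepsilon^2)\to-\infty$. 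So no inequality of the form you assert can hold, and hence $\Phi=\mathbf n\cdot(\partial_1\mathbf n\times\partial_2\mathbf n)$ is \emph{not} controlled in $\mathcal H^1$ by $\|\nabla\mathbf n\|_{L^2}^2$ alone.

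The fix is simpler than CLMS and is what the paper does. You already have, from \eqref{inna5},
\[
-\Delta f \;=\; \partial_1\mathbf e_1\cdot\partial_2\mathbf e_2-\partial_2\mathbf e_1\cdot\partial_1\mathbf e_2 \;=\; \sum_{k=1}^{3}\bigl(\partial_1 e_{1k}\,\partial_2 e_{2k}-\partial_2 e_{1k}\,\partial_1 e_{2k}\bigr),
\]
a genuine sum of Jacobians of $W^{1,2}$ scalar functions, with $f=0$ on $\partial D_1$. The classical Wente inequality (as sharpened by Topping) then gives
\[
\|f\|_{L^\infty(D_1)} \;\leq\; C\,\|\nabla\mathbf e_1\|_{L^2(D_1)}\,\|\nabla\mathbf e_2\|_{L^2(D_1)} \;\leq\; c(\delta),
\]
the last step using the bound $\|\nabla\mathbf e_i\|_{L^2}\leq c(\delta)$ you have just established via \eqref{aelita17} and Theorem~\ref{aelita22}. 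This is where hypothesis \eqref{aelita23} enters the $L^\infty$ estimate, and it must enter somewhere.
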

\begin{proof} By \eqref{aelita16} of  the preceding Lemma, 
\begin{equation}\label{inna15}
    -\Delta f=(\partial_1\mathbf n \times \partial_2\mathbf n)\cdot \mathbf
n=: \Phi\text{~in~} D_1,\quad f = 0 \text{ on } \partial D_1,
\end{equation}
and it follows from standard estimates of solutions of the
Dirichlet problem for Poisson's equation that
\begin{equation}\label{inna16}
    \|f\|_{C^k(D_1)}\leq c(k, \|\mathbf n\|_{C^k(D_1)}).
\end{equation}
In particular,
\begin{equation*}\label{inna17}
    \|\nabla f\|_{C^{k-1}(D_1)}\leq c(k, \|\mathbf n\|_{C^k(D_1)}),
\end{equation*}
from which it follows by \eqref{aelita17} and induction that
\begin{equation*}
    \|\nabla \mathbf e_i\|_{C^{k-1}(D_1)}\leq c(k, \|\mathbf
    n\|_{C^k(D_1)}),
\end{equation*}
which  with \eqref{inna16} implies \eqref{inna13}.  {By \eqref{aelita23}}, $\nn$ satisfies the hypotheses of Theorem
\ref{aelita22}, and hence by \eqref{aelita25},
$$
\|\Phi\|_{W^{-1,2}_0(D_1)}\leq \frac{c}{\delta}\|\nabla \mathbf
n\|_{L^2(D_1)}\leq \frac{\sqrt{8\pi}  c}{\delta}.
$$
This and \eqref{inna15}  yields that $
    \|\nabla f\|_{L^2(D_1)}\leq c(\delta)
$ and, when combined with \eqref{aelita17},
\begin{equation*}
    \|\nabla \mathbf e_j\|_{L^2(D_1)}\leq c(\delta).
\end{equation*}
It now follows  \eqref{inna15} and the Went-Topping inequality,
\cite[Thm. 1]{topping} and \cite{wente}, that $
    \|f\|_{L^\infty(D_1)}\leq c(\delta)
$. Hence \eqref{inna14} holds and the proof is complete.
\end{proof}

\textbf{Step 2. A Parameterized Family of Normal Vector Fields.}\\
For any smooth $\mathbf n:D_1 \to \SS$  {satisfying \eqref{aelita23}},
consider the family of vector fields
\begin{equation*}\label{inna7}
    \mathbf n_\lambda(X) =\mathbf n(\lambda X), \quad \lambda\in
    [0,1],\quad X \in D_{1}.
\end{equation*}
Note that  $\mathbf n_0$ is a constant vector field and that, for
all  $\lambda \in [0,1]$, $\mathbf n_\lambda$ satisfies
\begin{equation*}\label{inna8}
\begin{split}   \| \mathbf n_\lambda\|_{C^k(D_1)}&\leq \|\mathbf n\|_{C^k(D_1)},
   \, \, k\geq 0,\\ \int_{D_{1}}|\pd_1 \nn_\lambda\times\pd_2
   \nn_\lambda|dX&= \int_{D_{\lambda}}|\pd_1 \nn\times\pd_2
   \nn|\,dX
  \leq
   4\pi-\delta.\end{split}
\end{equation*}

\begin{corollary}\label{inna12a} Let $f_\lambda, \mathbf e_{\lambda,i}\in C^\infty(D_1)$
be a solutions to equations \eqref{inna4}-\eqref{inna5} with
$\mathbf n$ replaced by $\mathbf n_\lambda$. Then
\begin{gather*}
\|\mathbf e_{\lambda, i}\|_{C^k(D_1)}+\|f_{\lambda}\|_{C^k(D_1)}\leq
C(k,\nn), \quad k\geq 2,\\ \|\nabla\mathbf
e_{\lambda,i}\|_{L^2(D_1)}+\|\nabla f_\lambda\|_{L^2(D_1)}+
\|f_\lambda\|_{L^\infty(D_1)}\leq c(\delta),
\end{gather*}
where the constant $C(k,\nn)$ depends on $k$ and $\mathbf n$, but is
independent of $\lambda$,  and $c(\delta)$ depends only on $\delta$
 {in
 \eqref{aelita23}}.
\end{corollary}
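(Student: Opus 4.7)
The plan is to apply Lemma \ref{inna12} separately for each $\mathbf{n}_\lambda$, observing that the two quantities on which its constants depend, namely $\|\mathbf{n}\|_{C^k(D_1)}$ and the parameter $\delta$ from \eqref{aelita23}, admit $\lambda$-independent bounds once $\mathbf{n}$ is replaced by $\mathbf{n}_\lambda$. The corollary then follows by direct quotation, and no real obstacle is anticipated; the entire content lies in this uniformity observation. Since $f_\lambda$ and $\mathbf{e}_{\lambda,i}$ are given as solutions to \eqref{inna4}-\eqref{inna5} with $\mathbf{n}$ replaced by $\mathbf{n}_\lambda$, nothing needs to be constructed.

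For the uniform $C^k$ control, I would use that $\mathbf{n} \in C^\infty(D_1,\SS)$ extends smoothly to $D_r^\circ$ for some $r>1$, so that $\mathbf{n}_\lambda(X) = \mathbf{n}(\lambda X)$ is smooth on $D_1$ for every $\lambda \in [0,1]$. The chain rule gives $\partial^\alpha \mathbf{n}_\lambda(X) = \lambda^{|\alpha|}(\partial^\alpha \mathbf{n})(\lambda X)$, and since $\lambda \in [0,1]$ and $\lambda X \in D_1$, this yields $\|\mathbf{n}_\lambda\|_{C^k(D_1)} \leq \|\mathbf{n}\|_{C^k(D_1)}$ for every $k \geq 0$. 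In particular, the constant $c(k,\|\mathbf{n}_\lambda\|_{C^k})$ appearing via \eqref{inna16} in the proof of Lemma \ref{inna12} can be replaced by a single $C(k,\mathbf{n})$ independent of $\lambda$, delivering the first bound of the corollary.

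For the hypothesis \eqref{aelita23}, the change of variables $Y = \lambda X$ absorbs the factor $\lambda^2$ coming from $\partial_i \mathbf{n}_\lambda(X) = \lambda(\partial_i \mathbf{n})(\lambda X)$ into the Jacobian $dY = \lambda^2 \, dX$ to give
\begin{equation*}
\int_{D_1} |\partial_1 \mathbf{n}_\lambda \times \partial_2 \mathbf{n}_\lambda|\, dX = \int_{D_\lambda} |\partial_1 \mathbf{n} \times \partial_2 \mathbf{n}|\, dY \leq \int_{D_1} |\partial_1 \mathbf{n} \times \partial_2 \mathbf{n}|\, dX \leq 4\pi - \delta,
\end{equation*}
so each $\mathbf{n}_\lambda$ satisfies \eqref{aelita23} with the same $\delta$. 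With both uniformities established, applying Lemma \ref{inna12} to $\mathbf{n}_\lambda$ produces the second bound of the corollary with a constant $c(\delta)$ that depends only on $\delta$, completing the proof.
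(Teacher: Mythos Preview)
Your argument is correct and is exactly the paper's approach: the paper records the two uniformity observations $\|\mathbf n_\lambda\|_{C^k(D_1)}\leq \|\mathbf n\|_{C^k(D_1)}$ and $\int_{D_1}|\partial_1\mathbf n_\lambda\times\partial_2\mathbf n_\lambda|\,dX\leq 4\pi-\delta$ in the paragraph immediately preceding the corollary, and then its proof simply reads ``The proof is the same as for $\lambda=1$ in Lemma \ref{inna12}.'' You have supplied precisely the details behind that one-line proof.
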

\begin{proof} The proof is the same as for  $\lambda = 1$ in Lemma \ref{inna12}.
\end{proof}
\textbf{Step 3. Parameter Continuation.} \\ Denote by $\mathcal L$
the set of $\lambda\in [0,1]$ for which the system
\eqref{inna4},\eqref{inna5} has an infinitely differentiable
solution $\{f_\lambda, \mathbf e_{\lambda, i}:\,i=1,2\}$, and note
that $0\in \mathcal L$. Indeed, since $\mathbf n_0=\text{const.}$,
the function $f_0=0$ and an arbitrary pair of constant vectors
$\mathbf e_{0,i}$ with
$$
\mathbf e_{0,i}\cdot \mathbf e_{0,j}=\delta_{ij}, \quad \mathbf
e_{0,i}\cdot\mathbf n_0=0,\quad i=1,2,
$$
satisfy  \eqref{inna4} and \eqref{inna5} with $\mathbf n = \mathbf
n_0$.

To show that $\mathcal L$ is closed let $\lambda_n \in\mathcal L$
and $\lambda_n \to \lambda$ as $n\to \infty$. Then by  Corollary
\ref{inna12a} there is a sequence $\{n_\ell\} \subset \mathbb N$
such that the solutions, $\mathbf e_{\lambda_{n_{\ell},i}},\,i=1,2$,
and $f_{\lambda_{n_\ell}}$ to problem \eqref{inna4}-\eqref{inna5}
with $\mathbf n$ replaced by $\mathbf n_{\lambda_{n_\ell}}$,
converge,  in  $C^k(D_1)$ for all $k$, to functions denoted by
$\mathbf e_{\lambda, i}$ and $f_\lambda$. Obviously these functions
are infinitely differentiable and satisfy equations
\eqref{inna4}-\eqref{inna5} with $\mathbf n$ replaced by $\mathbf
n_\lambda$. Hence $\mathcal L$ is closed in $[0,1]$.

Now, following  \cite{helein}, to show $\mathcal L$ is open let
$\lambda_0\in \mathcal L$ and $I_0\subset [0,1]$ be a segment with
endpoints $\lambda_0$ and $\lambda_0+t_0$, $0<|t_0|<1$. The goal is
to prove that, for sufficiently small $t_0$ and all $t\in   I_0$,
equations \eqref{inna4} and \eqref{inna5} with $\mathbf n=\mathbf
n_{\lambda_0+t}$ have a smooth solution. To simplify  notation,  let
$\mathbf n^0$ and $\mathbf n^t$ denote $\mathbf n_{\lambda_0}$ and
$\mathbf n_{\lambda_0+t}$, and denote solutions of \eqref{inna4} and
\eqref{inna5} with $\mathbf n=\mathbf n^t$ by $\mathbf e^t_i$ and
$f^t$ .

Then, for $t \in I_0$ and $X\in D_1$, define a family of orthogonal
projections $\mathbb P^t(X):\mathbb R^2 \to \{\mathbf n^t(X)\}^\perp
\subset \mathbb R^2 $
 by
$$
\mathbb P^t(X)\boldsymbol \xi= \boldsymbol \xi-(\mathbf n^t(X)\cdot
\boldsymbol \xi)\, \mathbf n^t(X)\text{~for all~} \boldsymbol \xi\in
\mathbb R^2.
$$
Since $\lambda_0\in \mathcal L$, there exist $C^\infty$    vector
fields  $\mathbf e_i^0$, $i=1,2$, which satisfy  \eqref{inna4} and
\eqref{inna5} with $\mathbf n=\mathbf n^0$ and, since $\mathbf
n^0\in C^\infty(\mathbb R^2)$, there exists $t_0>0$ such that
$$
\|\mathbb P^t(X)-\mathbb P^0(X)\|\leq 1/8\text{~for~} X\in D_1.
$$
Then, for $t \in I_0$, let $ \overline{\mathbf e}^{(t)}_i(X)=
\mathbb P^t(X)\,\mathbf e_i^0(X), ~ i=1,2$,  and note, since
$\mathbb P^0\mathbf e_i^0=\mathbf e_i^0$, that for $X\in D_1$, and
$|t|\leq |t_0|$,
$$
3/4\leq |\overline{\mathbf e}^{(t)}_1|\leq 1, \quad
|\overline{\mathbf e}^{(t)}_2\cdot \overline{\mathbf e}^{(t)}_1|\leq
1/4.
$$
Therefore, with their dependence on $t \in I_0$ suppressed for
convenience of notation,  vector fields ${\bf e}^*_i$, $i=1,2$, are
well defined on $D_1$ by
$$\mathbf e_1^*= \frac{1}{|\overline{\mathbf e}^{(t)}_1|}\,\overline{\mathbf
e}^{(t)}_1, \quad \mathbf e_2^*=\frac{1}{|\overline{\mathbf
e}^{(t)}_2-(\overline{\mathbf e}^{(t)}_2\cdot \overline{\mathbf
e}^{(t)}_1)\overline{\mathbf e}^{(t)}_1|}\,\big(\overline{\mathbf
e}_2-(\overline{\mathbf e}^{(t)}_2\cdot \overline{\mathbf
e}_1)\overline{\mathbf e}^{(t)}_1\big).
$$
Since $\mathbf n^t\in C^\infty(D_1)$, it follows that $\mathbf
e_i^*\in C^\infty(D_1)$, and obviously the orthonormal triplets
$(\mathbf e_1^*, \mathbf e_2^*, \mathbf n^t)$ have positive
orientation, $t \in I_0$. Now, for any smooth function $\theta:D_1
\to \RR$ (which is to be determined),  define $\mathbf e^t_i$, $i=1,2$, by 
\begin{equation}\label{inna*}
\mathbf e^t_1+i\mathbf e^t_2=e^{i\theta}(\mathbf e_1^*+i\mathbf
e_2^*).
\end{equation}
Let $\mathbf e^*_i = (e^*_{i1},e^*_{i2})$. Then, since $\mathbf e^*_i
\cdot \mathbf e^*_j = \delta_{ij}$, $i,j=1,2$ and by\eqref{inna*},
\begin{equation}\label{inna**}\begin{split}
\mathbf  e_1^t &= (e^*_{11}\cos \theta -e^*_{21} \sin \theta , ~ e^*_{12} \cos \theta -e^*_{22}\sin \theta ),\\
\mathbf  e_2^t &= (e^*_{11}\sin \theta +e^*_{21} \cos \theta , ~
e^*_{12} \sin \theta +e^*_{22}\cos \theta ),
\end{split}
\end{equation}
 $(\mathbf e_1^t, \mathbf e_2^t,
\mathbf n^t)$ is an orthonormal triple with positive orientation.
The aim now is, for $t\in I_0$, to find a solution $\mathbf e^t_i$
of \eqref{inna4} and \eqref{inna5} in this form.

To do so, define vector fields on $D_1$ by
\begin{equation}\label{t*t}
\mathbf e_1^* \,d\mathbf e_2^*:=(\mathbf e_1^*\cdot \partial_1
\mathbf e_2^*,\, \,\mathbf e_1^*\cdot \partial_2 \mathbf
e_2^*),\quad \mathbf e_1^t \,d\mathbf e_2^t:=(\mathbf e_1^t\cdot
\partial_1 \mathbf e_2^t,\, \,\mathbf e_1^t\cdot \partial_2 \mathbf
e_2^t),
\end{equation}
and note from \eqref{inna**}, since $|\mathbf e_1^*(X)| = |\mathbf
e_2^*(X)| = 1$, $\mathbf e_1^*(X)\cdot\mathbf e_2^*(X) =0$ and  the
$\mathbf e_i^*$s are infinitely differentiable  on $D_1$, that
\begin{equation}\label{qc}
\mathbf e_1^t \,d\mathbf e_2^t=\nabla\theta +\mathbf e_1^*
\,d\mathbf e_2^*.
\end{equation}

 Next, as in
\cite{helein} 
 note that the variational problem
\begin{equation*}\label{inna20}
\inf\limits_{ \theta\in \mathcal V} \int_{D_1}\big|\nabla \theta
+\mathbf e_1^* \,d\mathbf e_2^*\big|^2\, dX ,\quad \mathcal V =
\Big\{ \theta\in W^{1,2}(D_1): ~ \int_{D_1} \theta\, dX=0\Big\},
\end{equation*}
 has a unique,
infinitely differentiable minimiser $\vartheta \in \mathcal V$ which
satisfies
$$\div \big(\nabla\vartheta +\mathbf e_1^*
\,d\mathbf e_2^*\big) = 0 \text{ in } D_1,\quad {\bs\nu}\cdot
(\nabla\vartheta +\mathbf e_1^* \,d\mathbf e_2^*) = 0 \text{ on }
\partial D_1,
$$
where $\boldsymbol\nu$ is the unit normal to $\partial D_1$. By
\eqref{qc} this can be re-written
\begin{equation}\label{inna21}
    \text{~div~}\mathbf e_1^t \,d\mathbf e_2^t=0\text{~in~} D_1,
    \quad \mathbf e_1^t \,d\mathbf
    e_2^t\cdot\boldsymbol \nu=0\text{~on~}\partial {D_1}.
\end{equation}
For  $t \in I_0$, let $\mathbf e_1^t \,d\mathbf e_2^t (X)=
(h^t_1(X), h^t_2(X)),~X \in D_1$, and put
$$f^t(X) = c  - \int_0^1 X\cdot \big(h^t_2(sX), -h_1^t(sX)\big) ds, \text{ where $c$ is a constant.}
$$
Then  $f^t\in C^\infty(D_1)$ and, since $\div
(h^t_1,h^t_2)=0$ by   \eqref{inna21},
$$
(h^t_1,h^t_2) = (\partial_2f^t, -\partial_1 f^t), \text{ or,
equivalently, } \mathbf e_1^t \,d\mathbf e_2^t(X)= \nabla^\perp f^t,
\quad X \in D_1.
$$
Hence, by \eqref{t*t},  $f^t$ and  $\mathbf e_i^t,\, i=1,2$, satisfy
\eqref{inna4}. 
Also, by the second part of  \eqref{inna21}, $f^t$ is constant on
$\partial D_1$, and the constant $c$ can be chosen so that $f^t=0$ on
$\partial D_1$. Therefore  \eqref{inna5} follows because
$$
\partial_1(\mathbf e^t_1\cdot \partial_2\mathbf e^t_2)-
\partial_2(\mathbf e^t_1\cdot \partial_1\mathbf e^t_2)
=\partial_1\mathbf e^t_1\cdot \partial_2\mathbf e^t_2-
\partial_2\mathbf e^t_1\cdot \partial_1\mathbf e^t_2.
$$
Since  $(\mathbf e_1^t, \mathbf e_2^t)$ and $f^t$ satisfy equations
\eqref{inna4} and \eqref{inna5} with $\mathbf n={\mathbf n}^t$ for all
$|t|\leq |t_0|$,  $\mathcal L$ is open in $[0,1]$. Since $\mathcal
L\neq \emptyset$ is also closed and $[0,1]$ is connected,
$\mathcal L =[0,1]$.
When $t=1$ this shows  $\mathbf e_1^1, \mathbf e_2^1, f^1$ satisfy
\eqref{inna4}  \eqref{inna5}, and \eqref{inna6} holds by Lemma
\ref{inna12}. Thus  {H\'elein's  Conjecture for $n=3$ with hypothesis \eqref{aelita23}} is established, and the proof
is complete.\qed

\section{ Beyond $\boldsymbol {8\pi}$}\label{krit000}

By developing the work of previous sections and using  some classical integral
 geometry, the aim  is to prove Theorem \ref{aelita30}    for  $\nn \in W^{1,2}(D_1,\SS)$.
\subsection{Preliminaries on Geometric Integration}

Let $\text{\rm card}(E)$ be the number of points in a finite set $
E$ and $\rm{card}(E)=\infty$ otherwise. Then for $E \subset\mathbb
R^m$, $\rm{card}(E)$ is finite when  its 0-Hausdorff measure
$\mathcal H^0(E)$  is finite, and if  $E$ consists of a finite
number of points $a_i\in \mathbb R^m$, every function $g$ defined on
$E$ is $\mathcal H^0$-measurable.

For fixed $\nn \in C^\infty(D_1,\SS)$ and  $\bn' \in \SS$, let
$$
Y(\nn,\bn') = \{X\in D_1: \nn(X) = \bn'\}.
$$
\begin{theorem}\label{krit2}
For $\nn \in C^\infty(D_1,\SS)$ and    $g\in L^1(D_1)$,
\begin{equation}\label{krit3}
    \int_{D_1} g(X) |\Phi(X)|\, dX=
    \int_{\mathbb S^2}\Big\{\sum_{\{A \in Y(\nn,\bn')\}}g(A)\Big\}\, d S_{\bn'},
\end{equation}
where $\Phi=\mathbf n\cdot (\partial_1\mathbf n
\times\partial_2\mathbf n)$.
\end{theorem}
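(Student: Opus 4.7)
The identity \eqref{krit3} is the classical area formula for the smooth map $\nn : D_1 \to \SS$ between two-dimensional manifolds, and my strategy is simply to set up the hypotheses of Federer's theorem \cite[Thm. 3.2.22]{federer}, which is already flagged in the Introduction as the tool for this section.

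First I would identify the Jacobian of $\nn$. Since $\partial_1 \nn$ and $\partial_2 \nn$ are both tangent to $\SS$ at $\nn(X)$, the vector $\partial_1 \nn \times \partial_2 \nn$ is parallel to $\nn$, and its magnitude $|\partial_1 \nn \times \partial_2\nn|$ is precisely the factor by which $\nn$ stretches an area element of $D_1$ onto the tangent plane to $\SS$ at $\nn(X)$. That is, the $2$-dimensional Jacobian of $\nn$ (viewed as a map between $2$-manifolds) at $X$ equals $|\partial_1 \nn(X)\times \partial_2\nn(X)|$. The calculation already given before \eqref{aelita21} shows $\Phi = \operatorname{sign}(\Phi)\,|\partial_1\nn\times\partial_2\nn|$, hence
$$ |\Phi(X)| = |\partial_1\nn(X)\times\partial_2\nn(X)| = J_\nn(X),\quad X \in D_1.$$

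Second, I would invoke the area formula: for a Lipschitz (here smooth) map $f:M\to N$ between two Riemannian $m$-manifolds and any $g\in L^1(M)$,
$$ \int_M g(X)\,J_f(X)\,d\mu_M(X) = \int_N \Big(\sum_{X\in f^{-1}(Y)} g(X)\Big)\, d\mu_N(Y).$$
Taking $f=\nn$, $M=D_1$ with Lebesgue measure, and $N=\SS$ with surface measure gives \eqref{krit3} verbatim.

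Third, I would verify the mild regularity points needed to apply the formula cleanly. By Sard's theorem, the set of critical values of the smooth map $\nn$ has Lebesgue measure zero in $\SS$; at every regular value $\bn'$ the preimage $Y(\nn,\bn')$ is a closed discrete subset of the compact set $D_1$, hence finite, so the inner sum in \eqref{krit3} is a finite sum of values of $g$ and is well defined for $dS_{\bn'}$-almost every $\bn'$. The measurability of $\bn'\mapsto \sum_{A\in Y(\nn,\bn')} g(A)$ is part of the statement of Federer's theorem (equivalently, one can see it locally via the inverse function theorem on the open set of regular values and handle the remaining null set trivially). This is really the only delicate point; there is no analytic obstacle since \eqref{krit3} is a direct specialisation of a standard result, and the bulk of the work is bookkeeping about how $\mathcal H^0$-counting of fibres matches the multiplicity in the area formula.
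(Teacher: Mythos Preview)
Your proposal is correct and matches the paper's own proof almost exactly: the paper simply identifies $|\Phi|$ with the approximate Jacobian $\text{ap}\,J_2 D\nn$ and then reads off \eqref{krit3} as the special case of Federer's area formula \cite[Thm.~3.2.22(3)]{federer} with $W=D_1$, $Z=\SS$, $f=\nn$, noting that $\mathcal H^2$ agrees with Lebesgue/surface measure and that integration against $\mathcal H^0$ is summation over fibres. Your additional remarks on Sard's theorem and measurability are fine but not needed, since these issues are absorbed into Federer's statement.
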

\begin{proof} In Federer's \cite[Thm. 3.2.22]{federer} with $\nu = 3$ and $n=m=\mu=2,$ let
$$
 \quad W= D^1, \quad Z=\mathbb S^2, \quad
w=X, \quad z=\bn' \text{ and } f= \nn \in C^\infty(D_1,\SS).
$$
Then  $\text{\rm ap}\,J_\mu Df(w)=|\Phi|$,  $2$-dimensional Hausdorff
measure coincide with Lebesgue
measure on $D_1$ and  $\SS$,  and with  $\mathcal H^0$ denoting 0-dimensional Hausdorff measure, 
$$ \int_E g\,
\,d\mathcal H^0= \sum_{A \in E} g(A) \text{ when } \mathcal H^0(E) <
\infty. $$ 
Thus \eqref{krit3} is the statement of
\cite[Thm. 3.2.22 (3)]{federer} in this context.
\end{proof}

\begin{corollary}\label{krit4}
Under the hypotheses of  Theorem \ref{krit2},  for all Borel sets
$\mathcal A\subset \mathbb S^2$,
\begin{equation*}\label{krit5}
    \int_{\mathcal F} g(X) |\Phi(X)|\, dX=
    \int_{\mathcal A}\Big\{\sum_{A\in Y(\nn,\bn')} g(A)\Big\}\, d S_{\bn'}, \quad
    \mathcal F=\nn^{-1}(\mathcal A),
\end{equation*}
\end{corollary}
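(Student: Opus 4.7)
The plan is to reduce the corollary directly to Theorem \ref{krit2} by substituting a cut-off version of $g$. Given a Borel set $\mathcal A\subset\SS$, the set $\mathcal F=\nn^{-1}(\mathcal A)$ is Borel in $D_1$ because $\nn \in C^\infty(D_1,\SS)$ is continuous, and hence $\chi_{\mathcal F}$ is a bounded Borel function, so $g\,\chi_{\mathcal F}\in L^1(D_1)$ whenever $g\in L^1(D_1)$.

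First I would apply Theorem \ref{krit2} with $g$ replaced by $g\,\chi_{\mathcal F}$, which gives
\begin{equation*}
\int_{\mathcal F} g(X)\,|\Phi(X)|\,dX
= \int_{D_1} g(X)\,\chi_{\mathcal F}(X)\,|\Phi(X)|\,dX
= \int_{\SS}\Big\{\sum_{A\in Y(\nn,\bn')} g(A)\,\chi_{\mathcal F}(A)\Big\}\,dS_{\bn'}.
\end{equation*}

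The key observation is that for every $\bn'\in\SS$ and every $A\in Y(\nn,\bn')$, one has $\nn(A)=\bn'$ by definition, so $A\in\mathcal F=\nn^{-1}(\mathcal A)$ if and only if $\bn'\in\mathcal A$. Consequently $\chi_{\mathcal F}(A)=\chi_{\mathcal A}(\bn')$ for every $A$ appearing in the inner sum, and this common value may be pulled outside:
\begin{equation*}
\sum_{A\in Y(\nn,\bn')} g(A)\,\chi_{\mathcal F}(A)
= \chi_{\mathcal A}(\bn')\sum_{A\in Y(\nn,\bn')} g(A).
\end{equation*}
Inserting this identity into the previous display and noting that integration of $\chi_{\mathcal A}(\bn')\,(\cdot)$ over $\SS$ is the same as integration of $(\cdot)$ over $\mathcal A$ yields the desired equality.

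Since the argument is essentially a bookkeeping step on top of the co-area-type formula of Theorem \ref{krit2}, there is no substantive analytic obstacle; the only mild point requiring care is confirming that $g\,\chi_{\mathcal F}$ lies in the class of functions to which Theorem \ref{krit2} applies, which follows from Borel measurability of $\mathcal F$ via continuity of $\nn$.
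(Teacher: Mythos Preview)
Your proof is correct and follows exactly the same approach as the paper: apply Theorem \ref{krit2} with $g$ replaced by $g\,\chi_{\mathcal F}$, use that $\chi_{\mathcal F}(A)=\chi_{\mathcal A}(\bn')$ whenever $A\in Y(\nn,\bn')$, and pull the characteristic function outside the sum to restrict the outer integral to $\mathcal A$.
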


\begin{proof} If $\chi_{_{\mathcal F}}$ and $\chi_{_{\mathcal A}}$ are the  characteristic functions of
 $\mathcal F$ and $\mathcal A$, $\chi_{_{\mathcal F}}(X)=
\chi_{_{\mathcal A}}(\mathbf n(X))$ and, in particular,
$\chi_{_{\mathcal F}}(X)=\chi_{_{\mathcal A}}(\bn')$ for all $ X\in
Y(\nn,\bn')$. It follows from \eqref{krit3} with $g$ replaced by
$\chi_{_{\mathcal F}}\, g$ that
\begin{multline*}
\int_{\mathcal F} g(X)|\Phi(X)|\, dX=\int_{D_1} g(X)\chi_{_{\mathcal
F}} (X)|\Phi(X)|\, dX \\= \int_{\mathbb S^2}\Big\{\sum_{A\in
    Y(\nn,\bn')}\chi_{_{\mathcal A}}(\bn') g(A)\Big\}\, d S_{\bn'}
=\int_{\mathcal A}\Big\{\sum_{A\in
    Y(\nn,\bn')} g(A)\Big\}\, d S_{\bn'},
\end{multline*}
which proves  the assertion.
\end{proof}

\subsection{Regular Points and their Properties}

The following lemma shows that, for fixed $\nn \in
C^\infty(D_1, \SS)$, the set $Y(\nn,\bn')$ is well behaved  for most
$\bn'\in \mathbb S^2$. Recall that $D_1^\circ$ is the interior of $D_1$.

\begin{lemma}\label{krit6}
For  $N>1$, $N \in \mathbb N$, there is a compact set $\mathcal
Q_N\subset \mathbb S^2$ such that:
\begin{itemize}\setlength\itemsep{0em}
\item[$(a)$]    $\bn'\in \mathcal Q_N$ implies that
 $Y(\nn, \bn') \subset D_1^\circ$, $\rm {card}(Y(\nn,\bn')) \leq N$,
\begin{equation}\label{krit7}
\int_{D_1}\frac{dX}{|\mathbf n(X)-\bn'|}\leq N \text{ and }~
|\bn'\pm \boldsymbol k|\geq \frac{1}{N}.
\end{equation}
\item[$(b)$]  Each $A\in Y(\nn,\bn')$, $\bn'\in \mathcal Q_N$, is
non-degenerate, meaning that $\Phi(A)\neq 0$, equivalently $\det
D\nn(A) \neq 0$, or $\partial_i\mathbf n(A),i=1,2$, are linearly
independent.
\item[$(c)$] For an absolute constant $c$,
\begin{equation*}\label{krit8}
    \text{\rm meas }(\mathbb S^2\setminus \mathcal Q_N)\,\leq\,
    \frac{c}{N}\Big(\int_{D_1}|\nabla \mathbf n|^2\, dX+1\Big).
\end{equation*}
\end{itemize}
\end{lemma}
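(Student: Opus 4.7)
The plan is to define $\mathcal{Q}_N$ as a compact subset of $\mathbb{S}^2$ obtained by removing five explicit ``bad'' subsets, one for each potential failure of (a) or (b), and then bounding each bad subset individually. Set
\begin{align*}
E_1 &= \nn(\partial D_1), \\
E_2 &= \{\bn'\in\mathbb{S}^2 : \mathrm{card}(Y(\nn,\bn'))\geq N+1\}, \\
E_3 &= \Big\{\bn' \in \mathbb{S}^2 : \int_{D_1}\frac{dX}{|\nn(X)-\bn'|}>N\Big\}, \\
E_4 &= \{\bn' : |\bn'-\boldsymbol{k}|<1/N\}\cup\{\bn' : |\bn'+\boldsymbol{k}|<1/N\}, \\
E_5 &= \{\text{critical values of }\nn\},
\end{align*}
and let $U = \mathbb{S}^2\setminus(E_1\cup\cdots\cup E_5)$. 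I then take $\mathcal{Q}_N$ to be any compact subset of $U$ whose complement in $U$ has spherical Lebesgue measure at most $1/N$; such a subset exists by inner regularity.

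The measure bounds split into two easy and two substantive cases. Smoothness of $\nn$ on $D_1$ together with the fact that $\partial D_1$ is a smooth $1$-manifold gives $\mathrm{meas}(E_1)=0$; Sard's theorem gives $\mathrm{meas}(E_5)=0$; and direct integration gives $\mathrm{meas}(E_4)\leq c/N^2$. For $E_2$, I apply Theorem \ref{krit2} with $g\equiv 1$ and invoke \eqref{aelita20}:
$$\int_{\mathbb{S}^2}\mathrm{card}(Y(\nn,\bn'))\,dS_{\bn'} = \int_{D_1}|\Phi|\,dX \leq \tfrac12\|\nabla\nn\|_{L^2(D_1)}^2,$$
and Chebyshev's inequality yields $\mathrm{meas}(E_2)\leq \|\nabla\nn\|_{L^2}^2/[2(N+1)]$. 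For $E_3$, Fubini combined with the rotation-invariant identity \eqref{pint} (whose value is independent of where $\nn(X)$ sits on $\mathbb{S}^2$) gives
$$\int_{\mathbb{S}^2}\int_{D_1}\frac{dX\,dS_{\bn'}}{|\nn(X)-\bn'|} = \int_{D_1} 4\pi\,dX = 4\pi^2,$$
hence $\mathrm{meas}(E_3)\leq 4\pi^2/N$. Adding the four contributions (together with the inner-regularity loss of $1/N$) delivers the estimate (c), with an absolute constant $c$.

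Conditions (a) and (b) then hold throughout $\mathcal{Q}_N\subset U$ by construction: $\bn'\notin E_1$ forces $Y(\nn,\bn')\subset D_1^\circ$; $\bn'\notin E_4$ gives the pole separation; $\bn'\notin E_3$ gives the integral bound; $\bn'\notin E_5$ makes $\bn'$ a regular value, so every $A\in Y(\nn,\bn')$ has $\det D\nn(A)\neq 0$, whence $\partial_1\nn(A),\partial_2\nn(A)$ are linearly independent and $\Phi(A)\neq 0$ (this is (b), and it also forces $Y(\nn,\bn')$ to be a discrete, hence finite, subset of $D_1^\circ$); finally $\bn'\notin E_2$ caps that cardinality at $N$. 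The only mild obstacle is the measurability/inner-regularity step needed to extract a compact $\mathcal{Q}_N$ from the (not necessarily closed) good set $U$; everything else is a direct application of Federer's area formula via Theorem \ref{krit2}, Fubini, and the spherical integral \eqref{pint}.
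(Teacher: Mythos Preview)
Your proof is correct and follows essentially the same scheme as the paper: remove from $\mathbb{S}^2$ the image of $\partial D_1$, the set where the preimage count exceeds $N$, the set where the singular integral exceeds $N$, and small caps around $\pm\boldsymbol{k}$, bounding each via Theorem~\ref{krit2}, Fubini with \eqref{pint}, and elementary area estimates, then pass to a compact subset.

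The one substantive difference is in how part~(b) is obtained. You invoke Sard's theorem to remove the critical values $E_5$ as an additional measure-zero set. The paper does not introduce $E_5$ at all; instead it shows that non-degeneracy is already forced by the integral bound in (a): if $\partial_1\nn(A)$ and $\partial_2\nn(A)$ were linearly dependent at some $A\in Y(\nn,\bn')$, then a local expansion gives $|\nn(X)-\bn'|\leq c|\alpha(X_1-A_1)+(X_2-A_2)|+c|X-A|^2$, which makes $\int_{D_1}|\nn(X)-\bn'|^{-1}\,dX=\infty$, contradicting \eqref{krit7}. So in the paper (b) is a \emph{consequence} of the integral condition already in (a), not an extra exclusion. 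Your Sard argument is cleaner and more direct; the paper's argument is more economical (one fewer bad set) and records the useful fact that the integral bound by itself rules out degenerate preimages. On the compactness step, the paper takes the complement of an open set $O_N\supset R^*$ (automatically closed), whereas you appeal to inner regularity; both are fine.
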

\begin{proof} The set $R_0=\{\bn' \in \SS: Y(\nn,\bn') \cap \partial D_1\neq \emptyset\}\subset  \nn(\partial D_1)$  has zero measure in $\SS$, because $\partial D_1$ has zero measure in $\RR^2$ and  $\nn:D_1 \to \SS$ is smooth.  From  \eqref{krit3} with $g\equiv 1$ and \eqref{aelita18},
$$
\int_{\mathbb S^2} \rm{card } (Y(\nn,\bn'))\,
dS_{\bn'}=\int_{D_1}|\Phi(X)|\, dX\leq \frac{1}{2}\int_{D_1}|\nabla
\mathbf n(X)|^2\, dX = \frac{1}{2} E(\nn),
$$ whence
\begin{equation}\label{krit9}
    \text{meas}\, (R_1)\leq \frac{E(\nn)}{2N} \text{ where $R_1 = \{\bn' \in
 {\SS}: \rm{card\,} (Y(\nn,\bn'))>
N\}$.}
\end{equation}
Since, by Fubini's theorem and \eqref{pint},
$$
\int_{\mathbb S^2}\Big\{\int_{D_1}\frac{dX}{|\mathbf n(X)-
\bn'|}\Big\}dS_{\bn'}=\int_{D_1}\Big\{\int_{\mathbb
S^2}\frac{1}{|\mathbf n(X)-\bn'|}dS_{\bn'} \Big\}dX\leq 4\pi^2,
$$
 it follows that
\begin{equation}
    \text{meas}\, (R_2)\leq
    \frac{4\pi^2}{N} \text{ where }
R_2= \left\{\bn' \in  {\SS}: \int_{D_1}\frac{dX}{|\mathbf
n(X)-\bn'|}> N\right\}.
\end{equation}
Note also from \eqref{pint} that
\begin{equation}\label{krit11}
    \text{meas}\, (R_3^\pm)\leq
    \frac{4\pi}{N}, \text{ where $R^\pm_3 =\{\bn'\in  {\SS}: |\bn'\pm k|< 1/N \}$.}
\end{equation}
Finally it follows from \eqref{krit9}-\eqref{krit11} that
\begin{equation*}\label{krit12}
    \text{meas}\, (R^*)\leq
     \frac{E(\nn)}{2N}+\frac{4\pi^2}{N}+\frac{8\pi}{N} \text{ where  }R^*=R_0\cup R_1\cup R_2\cup R_3^+ \cup R_3^-.
\end{equation*}
Hence there is an open  set $O_N\supset R^*$ such that, for an
absolute constant $c$,
\begin{equation*}
\text{meas}\, (O_N)\leq
     \frac{c}{N}\big(E(\nn) +1\big),
\end{equation*}
and $\mathcal Q_N = \SS \setminus O_N$
 satisfies parts ($a$) and ($c$).

 To prove ($b$),  suppose that some $A  \in Y(\nn,\bn')$ is degenerate, i.e., $\mathbf n(A) = \bn'$ and  $\partial
_i\mathbf n(A)$, $i=1,2$, are linearly dependent. Since the mapping
$\mathbf n(X)$ is infinitely differentiable,
$$
\partial_1\mathbf n(A)=\alpha \partial_2\mathbf n(A)\text{~or~}
\partial_2\mathbf n(A)=\alpha \partial_1\mathbf n(A)
$$
for some constant $\alpha$. In the first case (the second is
similar) for $X \in D_1$,
\begin{align*}
|\mathbf n(X)-\bn'|&=\big|\big(\alpha
(X_1-A_{1})+(X_2-A_{2})\big)\partial_2\mathbf n(X)+O(|X-A|^2)\big|
\\
& \leq c|\alpha (X_1-A_{1})+(X_2-A_{2})|+ c|X-A|^2,
\end{align*}
 where $c>0$ is some constant. Hence
\begin{equation*}\begin{split}
\int_{D_1}\frac{dX}{|\mathbf n(X)-\bn'|}
%\geq
%\int_{D_r(A)}\frac{dX}{c|\alpha (X_1-a_{i1})+(X_2-a_{i2})|+
%c|X-A_i|^2}
=\infty,
\end{split}\end{equation*}
which contradict \eqref{krit7}. This completes the proof of Lemma
\ref{krit6}.
\end{proof}
If  $Y(\nn,\bn') \neq \emptyset$ for $\nn \in C^\infty(D_1, \SS)$ and   $ \bn' \in
\mathcal Q_N$, let $$ Y(\nn,\bn') =
\{A_1(\bn'), \cdots, A_n(\bn')\},~A_i(\bn') \neq A_j(\bn'),\,i\neq
j,  \text{ where }n \leq N. $$

\begin{lemma}\label{krit13}  There exists $r_N>0$  such that, for  $\bs n'\in \mathcal Q_N$,
\begin{equation}\label{krit14}
|A_i(\bn')| < 1- r_N \text{ and }|A_i(\bn')-A_j(\bn')| > r_N,\quad 1\leq  i < j\leq n.
\end{equation}

\end{lemma}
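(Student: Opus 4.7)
The plan is to prove both inequalities simultaneously by a compactness/contradiction argument, exploiting the three properties of $\mathcal Q_N$ already established in Lemma \ref{krit6}: that $\mathcal Q_N$ is compact, that $Y(\nn,\bn')\subset D_1^\circ$ for every $\bn'\in\mathcal Q_N$, and that every $A\in Y(\nn,\bn')$ is non-degenerate in the sense that $D\nn(A)$ is non-singular.

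Suppose, for contradiction, that no $r_N>0$ as claimed exists. Then there is a sequence $\{\bn'_k\}\subset\mathcal Q_N$ such that at least one of the following holds: (i) for some choice of index $i_k$ the distance $\operatorname{dist}(A_{i_k}(\bn'_k),\partial D_1)\to 0$, or (ii) for some pair of distinct indices $i_k\neq j_k$ one has $|A_{i_k}(\bn'_k)-A_{j_k}(\bn'_k)|\to 0$. Because $\mathcal Q_N$ is compact we may pass to a subsequence along which $\bn'_k\to\bn'_*\in\mathcal Q_N$. Since $n\leq N$ throughout, by passing to further subsequences we may assume that the indices appearing in (i) or (ii) are constant in $k$, and that the relevant points $A_{i_k}(\bn'_k),A_{j_k}(\bn'_k)\in D_1$ converge to limits $A_*$ and (in case (ii)) $A_*$ again, since their mutual distance tends to $0$. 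Continuity of $\nn$ then gives $\nn(A_*)=\bn'_*$, so $A_*\in Y(\nn,\bn'_*)$.

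In case (i) the limit $A_*$ lies in $\partial D_1$, which contradicts part $(a)$ of Lemma \ref{krit6}, which guarantees $Y(\nn,\bn'_*)\subset D_1^\circ$. In case (ii) we use part $(b)$: since $\bn'_*\in\mathcal Q_N$ and $A_*\in Y(\nn,\bn'_*)$, the derivative $D\nn(A_*)$ is non-singular. By the inverse function theorem, $\nn$ is a local diffeomorphism on some open neighborhood $V$ of $A_*$ in $D_1^\circ$, and in particular $\nn$ is injective on $V$. But for sufficiently large $k$ both $A_{i_k}(\bn'_k)$ and $A_{j_k}(\bn'_k)$ lie in $V$ and satisfy $\nn(A_{i_k}(\bn'_k))=\nn(A_{j_k}(\bn'_k))=\bn'_k$ while remaining distinct, contradicting injectivity on $V$.

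Since both cases lead to contradictions, a uniform $r_N>0$ must exist, yielding \eqref{krit14}. The main subtlety is case (ii): without the non-degeneracy conclusion of part $(b)$ of Lemma \ref{krit6}, two distinct pre-images could collapse to the same point without producing any local contradiction, so the inverse function theorem argument is essential. Once non-degeneracy is invoked, the remainder of the proof is a routine extraction of convergent subsequences from the compactness of $\mathcal Q_N$ and $D_1$.
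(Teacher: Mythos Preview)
Your proof is correct and follows essentially the same strategy as the paper's: a compactness/contradiction argument that handles the boundary estimate via Lemma~\ref{krit6}(a) and the separation estimate via non-degeneracy from Lemma~\ref{krit6}(b). The only cosmetic difference is that for case~(ii) the paper derives local injectivity by hand through the quantitative estimate $|\nn(X'')-\nn(X')|\geq c|X''-X'|$ on a small disc about the limit point, whereas you invoke the inverse function theorem directly; the content is the same.
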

\begin{proof} Suppose no $r_N>0$ satisfies the first inequality \eqref{krit14}. Then, since $\mathcal Q_N$ and $D_1$ are compact, there exist  $i\in \{1,\cdots, N\}$ and $\{\bn_k'\}\subset \mathcal Q_N$ with $\bn'_{k} \to \bn' \in \mathcal Q_N$ and $A_{i}(\bn_k') \to A$ where $|A| = 1$. Since $\nn(A_{i}(\bn_k')) = \bn_k'$, it follows that $\nn(A) = \bn'$. But $A \in \partial D_1$ and $\bn' \in \mathcal Q_N$ is false, by Lemma \ref{krit6}.

 If the second inequality  \eqref{krit14} is false, there exist  $\{i_{_k}\}, \{j_{_k}\}$, and
$\{\bn_k'\}$ such that
\begin{equation}\label{krit15}
 i_{_k}\neq
j_{_k}, \quad \bn_k'\in \mathcal Q_N\text{ and
}A_{i_{_k}}(\bn_k')-A_{j_{_k}}(\bn_k')\to 0\text{  as }k\to \infty.
\end{equation}
Taking subsequence if necessary, assume  $A_{i_{_k}}(\bn_k')\to A_i$
and $\bn_k'\to \bn'$ as $k\to \infty$. Then $\mathbf n(A_i)=\bn'$,
since $\nn$ is continuous,  and $\bn'\in \mathcal Q_N$, since
$\mathcal Q_N$ is compact.

Since the mapping $X \mapsto \mathbf n(X)$ is non-degenerate at $X=
A_i$, the derivative $D\mathbf n(A_i):\mathbb R^2\to T_{\bn'}
(\mathbb S^2)$ has a bounded inverse with $\|D\mathbf
n(A_i)^{-1}\|\neq 0$. Hence, since $\mathbf n:X \to \mathbb S^2$ is
smooth, there exists $\rho>0$ such that
\begin{equation}\label{rhoo}
\|D\mathbf n(X)-D\mathbf n(A_i)\|\leq\frac{1}{3}\|D\mathbf n
(A_i)^{-1}\|^{-1} \text{~for~}|X-A_i|\leq \rho.
\end{equation}
So, for  $X',X''\in D_\rho(A_i)$, the disc  of radius $\rho$ about
$A_i$,
\begin{align*}
&\mathbf n(X'')-\mathbf n(X')=\int_0^1 D\mathbf
n\big(X'+t(X''-X')\big)(X''-X')\,dt\\&= D\mathbf
n\big(A_i\big)(X''-X')+ \int_0^1  \big\{ D\mathbf
n\big(X'+t(X''-X')\big)-D\mathbf n\big(A_i\big)\big\}(X''-X')\,dt.
\end{align*}
Since $X'+t(X''-X')\in D_\rho(A_i)$ it follows from \eqref{rhoo}
that
\begin{align*}
|\mathbf n(X'')-\mathbf n(X')|&\geq \frac{|X'-X''|}{\|D\mathbf
n(A_i)^{-1}\|}\\&- \sup\limits_{t\in [0,1]} \big\{ \|D\mathbf
n(X'+t(X''-X'))-D\mathbf n(A_i)\big\}\|\,|X''-X'|\\&
\geq\frac{2}{3\|D\mathbf n(A_i)^{-1}\|}|X'-X''|=c|X''-X'|, \quad
c>0, \text{ say}.
\end{align*}
Therefore since $\mathbf n(A_{i_{_k}})=\mathbf n(A_{j_{_k}})=\bn_k'$
for all $k$ and, by \eqref{krit15}, for $k$ sufficiently large
$A_{i_{_k}}(\bn'), A_{j_{_k}}(\bn') \in D_\rho(A_i)$,
$$
|A_{i_{_k}}(\bn')-A_{j_{_k}}(\bn')|\leq \frac{1}{c}|\bn_k'-\bn_k'|=0
\text{  for all sufficiently large $k$}.
$$
But $A_{i_{_k}}(\bn')\neq A_{j_{_k}}(\bn')$ for all $k$. This
contradiction completes the proof.
\end{proof}

\begin{lemma}\label{krit16} For $\bn_0'\in \mathcal Q_N$ and  $\delta>0$, let
$ \mathcal W_\delta(\bn_0')=\{ \bn\in \mathbb S^2: \,
|\bn_0'-\bn'|<\delta\}. $ Then there exists $\delta>0$ such that
$\rm {card\,}(Y(\nn,\bn'))=\rm{ card\,}(Y(\nn,\bn_0'))$ for all
$\bn'\in \mathcal W_\delta (\mathbf n_0')$. Moreover, $Y(\nn,\bn')$
can be labelled  $A_\ell(\bn'),\,1 \leq \ell\leq n\leq N$, where
\begin{equation*}\label{krit17}
A_\ell\in C^\infty(\mathcal W_\delta(\bn_0')), \quad A_\ell(\bn')\to
A_\ell(\bn_0')\text{~as~} \bn'\to \bn_0'.\end{equation*}
\end{lemma}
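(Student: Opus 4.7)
\textbf{Proof plan for Lemma \ref{krit16}.}
The strategy is to combine the inverse function theorem applied at each of the finitely many preimages in $Y(\nn,\bn_0')$ with a compactness argument on the complement, leveraging the apparatus built in Lemmas \ref{krit6} and \ref{krit13}.

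First I would write $Y(\nn,\bn_0')=\{A_1^0,\dots,A_n^0\}$ with $n\leq N$, as provided by Lemma \ref{krit6}(a). By Lemma \ref{krit6}(b) each $A_\ell^0$ lies in $D_1^\circ$ and $D\nn(A_\ell^0):\RR^2\to T_{\bn_0'}\SS$ is invertible, so the inverse function theorem yields, for every $\ell$, open neighborhoods $U_\ell\subset D_1^\circ$ of $A_\ell^0$ and $V_\ell\subset\SS$ of $\bn_0'$ together with a $C^\infty$ diffeomorphism $\nn:U_\ell\to V_\ell$. Call its inverse $A_\ell:V_\ell\to U_\ell$. Invoking the separation estimate $|A_i^0-A_j^0|>r_N$ of Lemma \ref{krit13}, I would shrink each $U_\ell$ so that the family $\{U_\ell\}_{\ell=1}^n$ is pairwise disjoint.

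Next I would set $K:=D_1\setminus\bigcup_{\ell=1}^n U_\ell$, a compact set on which $\nn$ avoids $\bn_0'$ by construction. Continuity gives $\nn(K)$ compact and missing $\bn_0'$, so there exists $\delta_1>0$ with $|\nn(X)-\bn_0'|\geq\delta_1$ for every $X\in K$. Pick $\delta\in(0,\delta_1)$ small enough that $\mathcal W_\delta(\bn_0')\subset\bigcap_{\ell=1}^n V_\ell$. For any $\bn'\in\mathcal W_\delta(\bn_0')$, all its preimages under $\nn$ must lie in $\bigcup_\ell U_\ell$, and within each $U_\ell$ the diffeomorphism produces exactly one preimage $A_\ell(\bn')$. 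Consequently $Y(\nn,\bn')=\{A_1(\bn'),\dots,A_n(\bn')\}$, its cardinality matches that of $Y(\nn,\bn_0')$, and the smoothness $A_\ell\in C^\infty(\mathcal W_\delta(\bn_0'))$ along with continuity $A_\ell(\bn')\to A_\ell^0$ as $\bn'\to\bn_0'$ is delivered by the inverse function theorem.

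The only subtle point, which is where the main work is concentrated, is ruling out preimages escaping toward $\partial D_1$ or merging pairs of branches. Both scenarios are already precluded by Lemma \ref{krit6}(a) (preimages stay in $D_1^\circ$ and are uniformly bounded in number) and Lemma \ref{krit13} (uniform separation of preimages). With those structural facts in hand, the compactness/continuity step applied to $K$ is what seals the argument.
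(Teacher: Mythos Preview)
Your proposal is correct and follows essentially the same approach as the paper: apply the inverse function theorem at each non-degenerate preimage $A_\ell(\bn_0')$ to obtain local smooth inverse branches, then argue that for $\bn'$ close enough to $\bn_0'$ no additional preimages can appear. The only cosmetic difference is that the paper establishes the ``no extra preimages'' step by a sequential contradiction argument (extract a limit $B$ of stray preimages $B_k$ and observe $\nn(B)=\bn_0'$ forces $B\in Y(\nn,\bn_0')$), whereas you do it directly via the positive distance $\delta_1$ between $\nn(K)$ and $\bn_0'$ on the compact complement $K$; these are equivalent.
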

\begin{proof} Let $n = \rm
{card\,}(Y(\nn,\bn_0'))$ where $Y(\nn,\bn_0') = \{A_\ell(\bn_0'): 1
\leq i\leq n\}$, and choose an arbitrary $\ell\in \{1,\cdots, n\}$.
Since the mapping $X \mapsto \mathbf n(X)$ is non-degenerate at $X
=A_\ell(\bn'_0)$, its derivative $D\mathbf n(A_\ell(\bn'_0)):\mathbb
R^2\to T_{\bn_0'} (\mathbb S^2)$, has  bounded inverse. Hence, by
the inverse function theorem, there exist $\delta_\ell>0$ and
$\rho_\ell\in (0, r_N)$ (see Lemma \ref{krit13}) such that the equation $\mathbf n(X)=\bn'$
has a unique solution $X=A_\ell(\bn')$ in the disc
$D_{\rho_\ell}(A_\ell(\bn'_0))$ for every $\bn'\in \mathcal
W_{\delta_\ell}(\bn_0')$. Moreover, the mappings $\bn' \mapsto
A_\ell(\bn')$ are infinitely differentiable on $\mathcal
W_{\delta}(\bn'_0)$, where $\delta=\min\{\delta_\ell: 1 \leq
\ell\leq n\}$, and $A_\ell(\bn')\to A_\ell$ as $\bn'\to \bn_0'$ for
all $1 \leq \ell \leq n$.

It remains to show that, for sufficiently small $\delta$, all the
solutions $X \in D_1$  of the equation $\mathbf n(X)=\bn',\,\bn'\in
\mathcal W_\delta(\bn'_0)$,  are given by
 $\{A_\ell(\bn'), \,1\leq i\leq
 n\}$. If no such $\delta>0$ exists, then there are sequences
$$
\delta_k\to 0 \text{ in } \RR, \quad \bn_k'\to \bn_0' \text{ in }
\SS, \quad \mathbf n(B_k)=\bn_k', ~B_k \in D_1,
$$
such that for all $\ell$,
$$
\quad |B_k- A_\ell(\bn_k')|>\rho_\ell ~\text{ where }~ A_\ell(
\bn_k')\to A_\ell(\bn_0')\text{~as~} k\to \infty.
$$
Since $\rho_\ell>0$ is independent of  $k$, after passing to a
subsequence if necessary, $B_k\to B$ as $k\to \infty$ and hence
$\mathbf n(B)=\bn_0'$ and $|B-A_\ell(\bn_0')|\geq \rho_\ell,\,1\leq
\ell \leq n_0 $. This is false since $A_\ell(\bn_0'),\,1\leq \ell
\leq n_0$, are the only solutions of $\mathbf n(X)=\bn_0'$.
\end{proof}

When  $\bn'\in \mathcal Q_N$, the next two lemmas concern the
regularity of the functions $ \omega_i(\nn, \bn')$, $i=1,2$, which
were defined   in \eqref{leila25} by
\begin{multline*}
\frac{m_3(X)+1}{m_1(X)^2+m_2(X)^2}
    \Big(m_1(X)(\mathbf U(\bn')\partial_i\mathbf n(X))_2-m_2(X)(\mathbf U(\bn')\partial_i\mathbf n(X))_1\Big).
\end{multline*}
Here $\mathbf m(X)= (m_1(X),m_2(X),m_3(X)) = \mathbf U( \bn')\,
\mathbf n(X), $ and the rotation matrix $\mathbf U(\bn')$ is given
by \eqref{leila15}. The notation  $\mathcal Q_N\subset \SS$, $r_N>0$
and $Y(\nn,\bn_0')$ are as in Lemma \ref{krit13} and \ref{krit16}.
\begin{lemma}\label{krit19}  For $\bn'_0 \in \mathcal Q_N$,
the functions  $\omega_i,\,i=1,2$, are infinitely differentiable with
respect to $(X,\bn')$ on $\big(D_1\setminus \bigcup_{\ell = 1}^n
D_{r_N/3}(A_\ell(\bn'_0)\big)\times \mathcal W_\delta(\bn_0')$, for
some $\delta>0$.
\end{lemma}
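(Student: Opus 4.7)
The plan is to apply Lemma \ref{leila60} to $G = D_1\setminus \bigcup_{\ell=1}^n D_{r_N/3}(A_\ell(\bn'_0))$ together with a suitable compact neighborhood $K$ of $\bn'_0$ inside $\mathbb S^2_0$. The role of Lemma \ref{leila60} is precisely to lift the pointwise-in-$\bn'$ smoothness of $\omega_i$ (established in Lemma \ref{leila26}) to joint smoothness in $(X,\bn')$, so the main task is to verify the hypothesis that $(\nn(X),\bn')$ stays uniformly off the diagonal of $\Sigma$ when $X\in G$ and $\bn'$ is close to $\bn'_0$.

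First I would observe that by Lemma \ref{krit16} the full preimage $Y(\nn,\bn'_0)$ consists exactly of the points $A_1(\bn'_0),\dots,A_n(\bn'_0)$, so every $X\in G$ satisfies $\nn(X)\neq\bn'_0$. Since $G$ is a closed subset of the compact disc $D_1$, hence compact, and $\nn$ is continuous, there exists $\sigma>0$ with
\begin{equation*}
|\nn(X)-\bn'_0| \geq \sigma \qquad \text{for all } X\in G.
\end{equation*}
Because $\bn'_0\in\mathcal Q_N$ implies $|\bn'_0\pm\boldsymbol k|\geq 1/N$ by \eqref{krit7}, we may pick $\delta_1>0$ small enough that both $\delta_1\leq \sigma/2$ and $K:=\overline{\mathcal W_{\delta_1}(\bn'_0)}$ is contained in $\mathbb S^2_0$. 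This $K$ is compact.

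Next, for any $X\in G$ and $\bn'\in K$ the triangle inequality gives $|\nn(X)-\bn'|\geq \sigma-\delta_1 \geq \sigma/2>0$, so $(\nn(X),\bn')\in\Sigma$. The set
\begin{equation*}
E \;=\; \bigl\{(\nn(X),\bn') : X\in G,\ \bn'\in K\bigr\}
\end{equation*}
is the continuous image of the compact set $G\times K$, hence compact, and by the above estimate $E\subset\Sigma$. Thus all hypotheses of Lemma \ref{leila60} are satisfied, so there exists an open neighborhood $O_K$ of $K$ on which $\omega_i(\nn,\bn')(X)$ is infinitely differentiable jointly in $(X,\bn')\in G\times O_K$. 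Finally, choose $\delta\in(0,\delta_1]$ small enough that $\mathcal W_\delta(\bn'_0)\subset O_K$; joint smoothness on $G\times \mathcal W_\delta(\bn'_0)$ follows.

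There is no substantial obstacle here: the construction is routine once the non-coincidence $\nn(X)\neq\bn'_0$ on $G$ is extracted from Lemma \ref{krit16}, the rest being a compactness and triangle-inequality argument followed by a direct invocation of Lemma \ref{leila60}. The only point requiring care is choosing $\delta_1$ simultaneously small enough to keep $K$ away from the poles $\pm\boldsymbol k$ (so that $K\subset\mathbb S^2_0$) and within $\sigma/2$ of $\bn'_0$ (so that the uniform separation $|\nn(X)-\bn'|\geq\sigma/2$ on $G\times K$ holds).
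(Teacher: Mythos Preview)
Your proof is correct and follows essentially the same approach as the paper: verify by compactness that $(\nn(X),\bn')$ stays in $\Sigma$ on the relevant set, then invoke Lemma~\ref{leila60}. The paper's version is slightly more economical in that it applies Lemma~\ref{leila60} with the singleton $K=\{\bn'_0\}$ and $E=\nn(F)\times\{\bn'_0\}$, letting the lemma itself produce the neighborhood $O_K\supset\mathcal W_\delta(\bn'_0)$; your preliminary fattening of $K$ to a closed ball and the accompanying triangle-inequality step are not wrong, just unnecessary.
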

\begin{proof} Consider the compact sets
$$
F= D_1\setminus \bigcup_{\ell = 1}^n D^\circ_{r_N/3}(A_\ell(\bn'_0))
\text{ and }E=\mathbf n(F)\times \{\bn_0'\},
$$
where $D^\circ_{r}(A)$ is the open disc with centre $A \in D_1$ and
radius $r$. Since $\bn_0'\in \mathbb S^2\setminus \{\pm \boldsymbol
k\}$ and $\mathbf n(X)\neq \bn_0'$, $X \in F$, it follows that $E
\subset \Sigma$ (Definition \ref{leiladefinition}) is compact. An
application of Lemma \ref{leila60} with $K=\{\bs n_0'\}$ completes
the proof. \end{proof}

\begin{lemma}\label{krit20} For smooth $\nn$ and  any $\bn_0' \in \mathcal Q_N$, the functions $\omega_i(\cdot, \bn_0'),\,i=1,2,$ are
infinitely differentiable on  $D_1\setminus
\bigcup_{\ell=1}^n\{A_\ell(\bn_0')\}$, and 
\begin{equation}\label{krit21}
\Phi(X)=\partial_2\,\omega_1(X, \bn_0')\,\big)-\partial
    _1\omega_2(X, \bn_0'),\quad
    |\omega_i(X, \bn_0')|\leq c\,\frac{|\partial_i \mathbf n(X)|}{|\mathbf n(X)-\bn_0'|}
\end{equation}
 for an absolute constant
$c$.
In particular, $\omega_i(\cdot, \bn_0')$ is integrable on
 $D_1$ and
 \begin{equation}\label{krit22}
    \int_{D_1}|\omega_i(X, \bn_0')|\,dX\leq c\,N \|\mathbf
    n\|_{C^1(D_1)}
\end{equation}
\end{lemma}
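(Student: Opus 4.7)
The plan is to deduce Lemma \ref{krit20} essentially verbatim from Lemma \ref{leila26}, using the quantitative information about $\bn_0' \in \mathcal Q_N$ recorded in Lemma \ref{krit6}(a) to upgrade the pointwise bound to an $L^1$ bound. First I would observe that $\bn_0' \in \mathcal Q_N$ implies $|\bn_0' \pm \boldsymbol k| \geq 1/N > 0$ by Lemma \ref{krit6}(a), hence $\bn_0' \in \SS_0$, so that the rotation matrix $\mathbf U(\bn_0')$ of \eqref{leila15} is defined and the construction \eqref{leila25} of $\omega_i(\cdot, \bn_0')$ applies. By the definition preceding Lemma \ref{krit13}, the exceptional set $\{A_1(\bn_0'), \dots, A_n(\bn_0')\}$ is precisely $Y(\nn, \bn_0') = \{X \in D_1 : \nn(X) = \bn_0'\}$, and therefore
$$
D_1 \setminus \bigcup_{\ell=1}^{n} \{A_\ell(\bn_0')\} \;=\; Z(\nn, \bn_0')
$$
in the notation of Definition \ref{gauss}. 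In particular, the set on which smoothness is asserted in the lemma coincides with the domain on which Lemma \ref{leila26} applies.

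Applying Lemma \ref{leila26} with $\bn' = \bn_0' \in \SS_0$ now yields at once that $\omega_i(\cdot, \bn_0') \in C^\infty(Z(\nn, \bn_0'))$, that the divergence identity $\Phi = \partial_2 \omega_1(\cdot, \bn_0') - \partial_1 \omega_2(\cdot, \bn_0')$ holds on this set, and that the pointwise bound
$$
|\omega_i(X, \bn_0')| \;\leq\; \frac{2\,|\partial_i \nn(X)|}{|\nn(X) - \bn_0'|}
$$
holds for $X \in Z(\nn, \bn_0')$. These are precisely the two assertions of \eqref{krit21}, with absolute constant $c=2$.

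For \eqref{krit22}, I would integrate the pointwise estimate from the previous step and invoke Lemma \ref{krit6}(a):
$$
\int_{D_1} |\omega_i(X, \bn_0')|\,dX \;\leq\; 2\,\|\partial_i \nn\|_{L^\infty(D_1)} \int_{D_1} \frac{dX}{|\nn(X) - \bn_0'|} \;\leq\; 2 N\,\|\nn\|_{C^1(D_1)},
$$
where the first factor is bounded by $\|\nn\|_{C^1(D_1)}$ and the second by $N$, thanks to the inequality $\int_{D_1} dX/|\nn(X) - \bn_0'| \leq N$ guaranteed by membership in $\mathcal Q_N$. This gives \eqref{krit22}, again with $c=2$.

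There is no genuine obstacle here: the heavy lifting has already been done, first in Lemma \ref{leila26} (which handled the smoothness and divergence identity away from the fibre $Y(\nn, \bn_0')$), and then in Lemma \ref{krit6} (which isolated the compact set $\mathcal Q_N$ of base points $\bn_0'$ for which the singular integral $\int_{D_1} dX/|\nn - \bn_0'|$ remains bounded). The only substantive point is the identification of the excluded finite set $\{A_\ell(\bn_0')\}_{\ell=1}^{n}$ with $Y(\nn, \bn_0')$, and hence with the complement of $Z(\nn, \bn_0')$; this is the bridge linking the analytic framework of Section \ref{asol} to the integral-geometric one of the present section.
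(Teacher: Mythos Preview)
Your proposal is correct and follows exactly the route the paper takes: the paper's proof consists of the single sentence ``Lemma \ref{leila26} yields \eqref{krit21}, and \eqref{krit22} follows from \eqref{krit7} and \eqref{krit21},'' which is precisely your argument with the details suppressed. Your explicit identification of $D_1 \setminus \bigcup_\ell \{A_\ell(\bn_0')\}$ with $Z(\nn,\bn_0')$ and the observation that $\bn_0' \in \mathcal Q_N$ forces $\bn_0' \in \SS_0$ fill in the only points the paper leaves implicit.
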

\begin{proof}  Lemma \ref{leila26} yields  \eqref{krit21}, and  \eqref{krit22} follows from \eqref{krit7} and \eqref{krit21}.
\end{proof}

\subsection{More on $\boldsymbol{\omega_i,\,i=1,2}$}%\label{estimates2}
 For fixed $N$ let $r_N>0$ be given by Lemma \ref{krit13} and, for $\bn' \in \mathcal Q_N$, let
$ Y(\nn,\bn_0') = \{A_\ell(\bn'_0):1\leq \ell\leq n\}\}$. Then for
$\zeta\in C^\infty_0 (D_1)$ it follows from Lemma \ref{krit20} and
the divergence theorem that, for every $\bn'\in \mathcal Q_N$ and $r
\in (0, r_N)$,
\begin{equation}\label{krit23}
    \int_{D_1\setminus \cup_{\ell = 1}^n D_r(A_\ell(\bn'))}\Phi(X) \zeta(X)\, dX =
    I(r, \bn')+J(r, \bn'),\end{equation}
  where   \begin{align*}
I(r, \bn')&= \int_{D_1\setminus \cup_{\ell = 1}^n
D_r(A_\ell(\bn'))}\big(\,\partial_1\zeta\,
\omega_2(X, \bn')-\partial_2\zeta\, \omega_1(X, \bn')\,\big)\, dX,\\
J(r, \bn')&= \sum_\ell^n\int_{\partial D_r(A_\ell(\bn'))}\zeta\,
\big(\,\omega_2(X, \bn')\nu_1- \omega_1(X, \bn')\nu_2\,\big)\, ds,
\notag\end{align*} and $(\nu_1,\nu_2)$ is the outward normal to
$\partial D_r(A_\ell(\bn'))$.

The proof of Theorem \ref{aelita30} depends on the following
technical lemma and the calculation in Lemma \ref{krit25} which
follows it.

\begin{lemma}\label{krit31}
$I(r,\bn')$ and $J(r,\bn')$ are continuous with respect to $\bn'$ on
$\mathcal Q_N$.

\end{lemma}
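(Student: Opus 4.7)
The plan is to prove continuity of $I(r,\cdot)$ and $J(r,\cdot)$ at an arbitrary reference point $\bn_0'\in\mathcal Q_N$ by combining the smooth dependence of the branch points $A_\ell(\bn')$ on $\bn'$ (Lemma \ref{krit16}) with the joint smoothness of $\omega_i(X,\bn')$ in $(X,\bn')$ away from the singular set (Lemma \ref{leila60}). Throughout I work in the neighborhood $\mathcal W_\delta(\bn_0')$ of Lemma \ref{krit16}, on which $n:=\mathrm{card}\,Y(\nn,\bn')$ is constant and each $A_\ell(\bn')$ is a smooth function of $\bn'$.

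The boundary piece $J$ is easier and I would handle it first. Parameterize each circle $\partial D_r(A_\ell(\bn'))$ by $\theta\mapsto A_\ell(\bn')+r(\cos\theta,\sin\theta)$. The resulting point lies at distance $r$ from $A_\ell(\bn')$ and, by Lemma \ref{krit13}, at distance at least $r_N-r>0$ from every other $A_j(\bn')$, so it is never in $Y(\nn,\bn')$. Applying Lemma \ref{leila60} with $K=\{\bn_0'\}$ and $G$ a compact neighborhood of $\partial D_r(A_\ell(\bn_0'))$ disjoint from $Y(\nn,\bn_0')$ gives joint smoothness of $\omega_i$ in $(X,\bn')$ on $G\times\mathcal W_{\delta'}(\bn_0')$ for some $\delta'>0$. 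The integrand of $J$ then becomes a continuous function of $(\theta,\bn')$ on the compact set $[0,2\pi]\times\overline{\mathcal W_{\delta'}(\bn_0')}$, and continuity of $J(r,\cdot)$ at $\bn_0'$ follows at once.

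The piece $I(r,\cdot)$ is more delicate because its integration domain varies with $\bn'$. Given $\epsilon\in(0,r/4)$, shrink $\delta$ so that $|A_\ell(\bn')-A_\ell(\bn_0')|<\epsilon$ for all $\bn'\in\mathcal W_\delta(\bn_0')$. Set
$$\mathcal E^*:=D_1\setminus\bigcup_{\ell=1}^{n}\overline{D_{r+\epsilon}(A_\ell(\bn_0'))},\qquad \mathcal A_\epsilon:=\bigcup_\ell\bigl\{X:r-\epsilon\leq|X-A_\ell(\bn_0')|\leq r+\epsilon\bigr\}.$$
Then $\mathcal E^*$ sits inside both $D_1\setminus\bigcup_\ell D_r(A_\ell(\bn'))$ and $D_1\setminus\bigcup_\ell D_r(A_\ell(\bn_0'))$, the symmetric difference of these domains lies in $\mathcal A_\epsilon$, and $|\mathcal A_\epsilon|=O(\epsilon)$. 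Write $I(r,\bn')-I(r,\bn_0')$ as a bulk integral over $\mathcal E^*$ of $F(X,\bn')-F(X,\bn_0')$, where $F:=\partial_1\zeta\,\omega_2-\partial_2\zeta\,\omega_1$, plus two remainder integrals supported in $\mathcal A_\epsilon$. The bulk vanishes as $\bn'\to\bn_0'$ by uniform continuity of $F$ on $\mathcal E^*\times\overline{\mathcal W_\delta(\bn_0')}$, which follows from Lemma \ref{leila60} applied with $K=\{\bn_0'\}$ and $G=\mathcal E^*$ (since $\nn(\mathcal E^*)$ is bounded away from $\bn_0'$).

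The key step is bounding the two remainders uniformly in $\bn'$. On the compact set $\overline{\mathcal A_\epsilon}$ we have $\nn(X)\neq\bn_0'$ because $\mathcal A_\epsilon$ contains no point of $Y(\nn,\bn_0')$, so $|\nn(X)-\bn_0'|\geq c_\epsilon>0$ there; moreover $c_\epsilon$ stays bounded below as $\epsilon\to0$, since the limiting circles $\{|X-A_\ell(\bn_0')|=r\}$ also avoid $Y(\nn,\bn_0')$. Shrinking $\delta$ once more yields $|\nn(X)-\bn'|\geq c_\epsilon/2$ on $\overline{\mathcal A_\epsilon}\times\mathcal W_\delta(\bn_0')$, and combined with the pointwise estimate \eqref{krit21}, $|\omega_i(X,\bn')|\leq c\,|\partial_i\nn(X)|/|\nn(X)-\bn'|$, each remainder is at most $O(\epsilon)$. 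Therefore $\limsup_{\bn'\to\bn_0'}|I(r,\bn')-I(r,\bn_0')|=O(\epsilon)$, and sending $\epsilon\to0$ completes the proof. The main difficulty, and what I would pay the most attention to, is precisely this decoupling of the varying domain from the singular integrand, both controlled on a common small scale $\epsilon$.
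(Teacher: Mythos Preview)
Your argument is correct and follows essentially the same route as the paper: for $J$ you use the joint smoothness of $\omega_i$ in $(X,\bn')$ near the circles (the paper does this via the change of variables $X\mapsto X+A_\ell(\bn')-A_\ell(\bn_0')$, you via direct parameterisation), and for $I$ you split into a common bulk domain where uniform continuity applies plus small remainder pieces where $\omega_i$ is bounded and the measure vanishes. The only cosmetic difference is that the paper takes $G(r,\bn')\cap G(r,\bn_0')$ as the common domain and invokes uniform boundedness from Lemma \ref{krit19}, whereas you use the slightly smaller $\mathcal E^*$ and the explicit pointwise estimate \eqref{krit21}; both choices work.
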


\begin{proof} Fix  $\bn_0'\in \mathcal Q_N$. Then  $\omega_i,\,i=1,2,$ are
infinitely differentiable on  $\big(D_1\setminus \bigcup_\ell^n
D_{r/3}(A_\ell(\bn'_0))\big)\times \mathcal W_{\delta}(\bn_0')$,
by Lemma \ref{krit19}. Moreover, by Lemma \ref{krit16}, for some
$\delta>0$ the function $A_\ell \in C^\infty(\mathcal
W_\delta(\bn_0'))$ and  $A_\ell(\bn')\to A_\ell(\bn_0')$ as $\bn'\to
\bn_0'$ in $\mathcal Q_N$. Therefore, for $\delta>0$ sufficiently
small, $|A_\ell(\bn')-A_\ell(\bn'_0)| <r/3$ for all $\bn'\in
\mathcal W_\delta(\bn_0')$. It follows that $\partial
D_r(A_\ell(\bn'_0))\subset D_1\setminus \cup_{\ell=1}^n
D_{r/3}(A_\ell(\bn'_0))$ for all such $\bn'$. Changing coordinates
$X \mapsto X+A_\ell(\bn')-A_\ell(\bn'_0)$ yields, for all $1\leq
i\leq n$ and $j=1,2$, that
\begin{multline*}
\int_{\partial D_r(A_\ell(\bn'))} \zeta(X)\,\omega_i(X, \bn')\nu_j\,
ds\\=\int_{\partial
D_r(A_\ell(\bn'_0))}\zeta(X+A_\ell(\bn')-A_\ell(\bn'_0))\,\omega_i(X+A_\ell(\bn')-A_\ell(\bn'_0),
\bn')\nu_j ds.
\end{multline*}
Since $X+A_\ell(\bn')-A_\ell(\bn'_0)\in D_1\setminus \bigcup_\ell
D_{r/3}(A_\ell(\bn'_0))\big)$, the functions $(X,\bn') \mapsto
\omega_i(X+A_\ell(\bn')-A_\ell(\bn'_0), \bn')$ are continuous on
$\partial D_r(A_\ell(\bn'_0))\times \mathcal W_\delta(\bn_0')$,
whence
\begin{multline*}
\int_{\partial
D_r(A_\ell(\bn'_0))}\zeta(X+A_\ell(\bn')-A_\ell(\bn'_0)\,\omega_i(X+A_\ell(\bn')-A_\ell(\bn'_0),
\bn'))\nu_j ds\\ \to \int_{\partial D_r(A_\ell(\bn'_0))}
\zeta(X)\,\omega_i(X, \bn_0')\nu_j\, ds \text{ as $\bn'\to \bn_0'$.}
\end{multline*}
 Therefore  $\bn' \mapsto J(r,
\bn')$ is continuous at every $\bn'_0\in \mathcal Q_N$.

Next set $G(r, \bn')=D_1\setminus \cup_{\ell=1}^n
D_r(A_\ell(\bn'))$. Then
\begin{multline*}
|I(r, \bn')-I(r, \bn_0')|\leq c\int_{G(r,\bn') \cap
G(r,\bn_0')}|\omega_i(X,\bn')-\omega_i(X,
\bn_0')|\, dX\\
+c\int_{G(r,\bn') \setminus G(r,\bn_0')}|\omega_i(X, \bn_0')|\,
dX+c\int_{G(r,\bn_0') \setminus \mathcal
G(r,\bn')}|\omega_i(X,\bn')|\, dX.
\end{multline*}
Since, for $|\bn'-\bn_0'|\geq r/3$, the closure of $G(r, \bn')\cup
G(r, \bn_0')$ is a compact subset of $ D_1\setminus\cup_{\ell =1}^n
D_{r/3}(A_\ell(\bn'_0))$, the functions $\omega_i(X, \bn')$ are
uniformly continuous and uniformly bounded on
 $G(r, \bn')\cup G(r, \bn_0')\times \mathcal W_\delta(\bn_0')$. Moreover
$$
\text{meas~}G(r,\bn') \setminus G(r,\bn_0')+ \text{meas~}
G(r,\bn_0') \setminus G(r,\bn')\to 0
$$
as $\bn'\to\bn_0'$. It follows that $I(r, \bn')$ is continuous at
every point $\bn_0'\in \mathcal Q_N$.
\end{proof}
\begin{lemma}\label{krit25} For $\bn' \in \mathcal Q_N$ and
    $Y(\nn,\bn')=\{A_\ell(\bn'): 1 \leq i \leq n\}$,
\begin{align}
\label{krit27}
    \lim\limits_{r\to 0} I(r, \bn')&=\int_{D_1}\big(\,\partial_1\zeta \omega_2(X, \bn')-\partial_2\zeta
\omega_1(X, \bn')\,\big)\, dX, \\
\label{krit26}
    \lim\limits_{r\to 0} J(r, \bn')&=4\pi\sum_{\ell=1}^n  \zeta(A_\ell(\bn'))\text{\rm sign}\, \Phi(A_\ell(\bn')).
\end{align}
\end{lemma}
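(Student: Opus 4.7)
The two limits \eqref{krit27} and \eqref{krit26} will be treated in turn.

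For \eqref{krit27}, fix $\bn'\in\mathcal Q_N$. The pointwise bound \eqref{krit21} together with \eqref{krit7} shows that $|\omega_i(\cdot,\bn')|\leq c\,|\partial_i\mathbf n|/|\mathbf n-\bn'|$ is in $L^1(D_1)$, so the integrand of $I(r,\bn')$ is dominated by an $r$-independent integrable function. Since the characteristic function of $D_1\setminus\bigcup_{\ell}D_r(A_\ell(\bn'))$ converges pointwise a.e.\ to $1$ as $r\to 0$, dominated convergence yields \eqref{krit27}.

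For \eqref{krit26}, I would parameterise $\partial D_r(A_\ell(\bn'))$ counterclockwise by $\gamma_\ell(t)=A_\ell(\bn')+r(\cos t,\sin t)$, $t\in[0,2\pi]$. The outward normal is $(\nu_1,\nu_2)=(\cos t,\sin t)$ and $\gamma_\ell'(t)=r(-\sin t,\cos t)$, whence a direct check gives $(\omega_2\nu_1-\omega_1\nu_2)\,ds=\omega_1\,d\gamma_\ell^1+\omega_2\,d\gamma_\ell^2$, so the $\ell$-th boundary integral is the line integral $\oint_{\gamma_\ell}\zeta\,\omega$, where $\omega:=\omega_1\,dX_1+\omega_2\,dX_2$. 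With $\mathbf m=\mathbf U(\bn')\mathbf n$ and $\varphi=\arg(m_1+im_2)$, the spherical-coordinate calculation preceding \eqref{leila10} identifies $\omega=(1+m_3)\,d\varphi$ wherever $m_1^2+m_2^2>0$; and $\mathbf m(A_\ell(\bn'))=\mathbf U(\bn')\bn'=\boldsymbol k$ by \eqref{leila23}.

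Because $\bn'\in\mathcal Q_N$, Lemma \ref{krit6}(b) gives $\Phi(A_\ell)\neq 0$; combined with $\partial_i\mathbf m\cdot\mathbf m=0$ and $\mathbf m(A_\ell)=\boldsymbol k$, this forces $\partial_i m_3(A_\ell)=0$ and identifies the planar Jacobian at $A_\ell$ as
$$\partial_1 m_1\,\partial_2 m_2-\partial_2 m_1\,\partial_1 m_2 = \mathbf m\cdot(\partial_1\mathbf m\times\partial_2\mathbf m)\big|_{A_\ell}=\Phi(A_\ell).$$
Hence $(m_1,m_2)$ is a local diffeomorphism near $A_\ell$ of signed Jacobian $\mathrm{sign}\,\Phi(A_\ell)$, so for all sufficiently small $r$ the image curve $(m_1,m_2)\circ\gamma_\ell$ winds around the origin exactly $\mathrm{sign}\,\Phi(A_\ell(\bn'))$ times, yielding $\oint_{\gamma_\ell}2\,d\varphi=4\pi\,\mathrm{sign}\,\Phi(A_\ell(\bn'))$. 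The remainder $\oint_{\gamma_\ell}(m_3-1)\,d\varphi$ is $O(r^2)$, since $m_3-1=O(r^2)$ on $\gamma_\ell$ (from $\partial_i m_3(A_\ell)=0$) while $|d\varphi|$ integrates to $O(1)$ on $\gamma_\ell$ (because $m_1^2+m_2^2\asymp r^2$ and $|dm_i|=O(r)\,dt$). Replacing $\zeta(X)$ by $\zeta(A_\ell(\bn'))$ introduces error $O(r)$, using $|\zeta(X)-\zeta(A_\ell)|=O(r)$ and the $O(1)$ total variation of $\omega$ along $\gamma_\ell$. Summing over $\ell$ gives \eqref{krit26}.

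The main obstacle is the winding-number step: one must show that for $r$ small enough the plane loop $(m_1,m_2)\circ\gamma_\ell$ winds exactly $\mathrm{sign}\,\Phi(A_\ell(\bn'))$ times around the origin. This rests on the vanishing $\partial_i m_3(A_\ell)=0$, which reduces the non-degeneracy of the $3\times 2$ Jacobian of $\mathbf m$ to the $2\times 2$ determinant of $(m_1,m_2)$—precisely the triple product $\Phi(A_\ell)$. Once this is in place, the $O(r^2)$ and $O(r)$ error estimates are routine.
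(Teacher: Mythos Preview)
Your argument is correct and rests on the same geometric picture as the paper's: at each $A_\ell$ one has $\mathbf m(A_\ell)=\boldsymbol k$, hence $\partial_i m_3(A_\ell)=0$, so $1+m_3=2+O(r^2)$ and the boundary integral reduces to $2\zeta(A_\ell)$ times the increment of $\arg(m_1+im_2)$ around $\gamma_\ell$, which is $2\pi\,\mathrm{sign}\,\Phi(A_\ell)$ by non-degeneracy.

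The difference is purely in how this last winding number is extracted. The paper first replaces $(m_1,m_2)$ by its linear part $p(X)=X_1\boldsymbol\mu_1+X_2\boldsymbol\mu_2$ (showing the angular integrands differ by $O(1)$ on $\partial D_r$, hence the circle integrals differ by $O(r)$), and then computes $\int_0^{2\pi}\partial_\theta\arg(p_1+ip_2)\,d\theta$ by an explicit piecewise analysis of $\arctan$ on four subintervals of $[0,2\pi)$, finally identifying $\mathrm{sign}(\sin\alpha)$ with the orientation of $(\boldsymbol k,\boldsymbol\mu_1,\boldsymbol\mu_2)$ and hence with $\mathrm{sign}\,\Phi(A_\ell)$. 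Your route bypasses both the linearisation and the four-interval calculation by invoking directly that a local diffeomorphism of the plane sends a small positively oriented circle to a Jordan curve of winding number $\mathrm{sign}(\det D)$ about the image of the centre. This is shorter and more conceptual; the paper's version is entirely self-contained and makes no appeal to degree theory. Either way the identification $\det D(m_1,m_2)(A_\ell)=\boldsymbol k\cdot(\partial_1\mathbf m\times\partial_2\mathbf m)(A_\ell)=\Phi(A_\ell)$ is the crux, and you have it.
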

\begin{proof}
To prove \eqref{krit27} it is suffices to note from \eqref{krit7}
and Lemma \ref{krit20} that the functions $\omega_i(\cdot,
\bn'),\,i=1,2$, are  integrable in $D_1$ for every $\bn'\in \mathcal
Q_N$.

The proof of \eqref{krit26} is more complicated. Fix an arbitrary
$\bn'\in \mathcal Q_N$ and recall from \eqref{leila25} that
\begin{equation*}\label{krit32}
\omega_i(X, \bn')=(m_3+1)\frac{m_1\partial_i m_2-m_2\partial_i
m_1}{m_1^2+m_2^2},
\end{equation*}
where $\mathbf m(X)=\mathbf U(\bn') \mathbf n(X)$ and the orthogonal
matrix $\mathbf U(\bn')$ is defined by \eqref{leila15}. Therefore $
\mathbf m(A_\ell(\bn'))= \mathbf U(\bn')\bn'=\boldsymbol k =(0,0,1)
$ for $A_\ell(\bn')\in Y(\nn,\bn').$

Now fix $\ell\in \{1,\cdots, n\}$ and to simplify notation change
the origin of coordinates in $D_1$ so that $A_\ell(\bn')=0$,
$\mathbf m(0)=\boldsymbol k$ and, as $X \to 0$,
\begin{equation*}
    \mathbf m (X)=\boldsymbol k +X_1\boldsymbol\mu_1+X_2\boldsymbol \mu_2 +O(|X|^2), \text{ where }\boldsymbol \mu_i = \mathbf U(\bn') \partial_i\mathbf n(0) \in \mathbb R^3.
\end{equation*}
Since $\boldsymbol \mu_i\perp \boldsymbol k$, because $\mathbf
m(X)\in \SS$,
 it follows that
\begin{equation}\label{krit33}
    (m_1(X),m_2(X))=X_1\boldsymbol \mu_1+X_2\boldsymbol \mu_2+\mathbf h(X), \quad
    |\mathbf h(X)|\leq c  |X|^2.
\end{equation}

With $A_\ell(\bn') = 0$, it follows from Lemma \ref{krit6} (b) that
$\partial_i\mathbf n(0)$,  and hence  $\bs\mu_i $, $i=1,2$, are
linearly independent. Therefore
$$
\sqrt{m_1^2+m_2^2}\geq c_1|X|-c_2|X|^2\geq c r \text{~on~}\partial
D_r (0),
$$
for constants $c_1,c_2$, and $r>0$ sufficiently small. It follows
that
$$
\frac{|m_1\partial_i m_2-m_2\partial_i m_1|}{m_1^2+m_2^2}\leq \frac
cr\text{~on~}\partial D_r.
$$
Since $(1-m_3)(1+m_3)=m_1^2+m_2^2$ and $m_3(0)=1$,
$$
|(m_3+1)-2|\leq c (m_1^2+m_2^2)\leq cr^2\text{~on~} \partial D_r
$$
for all sufficiently small $r$. Therefore
$$
\int_{\partial
D_r}\Big|\zeta(X)(m_3+1)-2\zeta(0)\Big|\frac{|m_1\partial_i
m_2-m_2\partial_i m_1|}{m_1^2+m_2^2}ds\leq c\int_{\partial
D_r}ds\leq c r\to 0
$$
as $r\to 0$. Hence, similarly, for $i,j = 1,2$, $i \neq j$,
\begin{equation}\label{krit34}
    \lim\limits_{r\to 0}\int_{\partial D_r}\zeta(X)\omega_i(X,
    \bn')\nu_jds=2\zeta(0)\lim\limits_{r\to 0}
   \int_{\partial D_r} \nu_j\frac{m_1\partial_i m_2-m_2\partial_i
m_1}{m_1^2+m_2^2}ds.
\end{equation}
Since $\boldsymbol \mu_i\perp \boldsymbol k$ in $\mathbb R^3$,  a
linear function $\mathbf p:\mathbb R^2 \to \mathbb R^2$ is defined
by putting
$$
\mathbf p(X)= (p_1(X), p_2(X))\text{ where } (p_1(X), p_2(X),0)=X_1
\boldsymbol \mu_1+X_2\boldsymbol \mu_2.
$$
It follows from \eqref{krit33} that
$$
\big|(p_1(X), p_2(X))- (m_1(X), m_2(X))\big|\leq c|X|^2,
$$
and hence
$$
\Big|\frac{m_1\partial_i m_2-m_2\partial_i m_1}{m_1^2+m_2^2}-
\frac{p_1\partial_i p_2-p_2\partial_i p_1}{p_1^2+p_2^2}\Big|\leq c
$$
on $\partial D_r$ for $r$ sufficiently small. It follows that
\begin{equation}\label{krit35}
\lim\limits_{r\to 0} \int_{\partial D_r} \nu_j\frac{m_1\partial_i
m_2-m_2\partial_i m_1}{m_1^2+m_2^2}ds=\lim\limits_{r\to 0}
\int_{\partial D_r} \nu_j\frac{p_1\partial_i p_2-p_2\partial_i
p_1}{p_1^2+p_2^2}ds.
\end{equation}
Since the integral on the right is independent of $r$, it can be
replaced by the same integral over $\partial D_1$, and so, combining
\eqref{krit34} with \eqref{krit35}, yields
\begin{multline}\label{krit36}
 \lim\limits_{r\to 0}\int_{\partial D_r}\zeta(X)(\omega_2(X,
    \bn')\nu_1-\omega_2(X,
    \bn')\nu_1)ds\\=2\zeta(0)
\int_{\partial D_1}\Big\{\frac{p_1\partial_2 p_2-p_2\partial_2
p_1}{p_1^2+p_2^2}\nu_1-\frac{p_1\partial_1 p_2-p_2\partial_1
p_1}{p_1^2+p_2^2}\nu_2\Big\}\, ds.
\end{multline}
It remains to calculate the right side of \eqref{krit36}. Since
$\boldsymbol \mu_i,\,i=1,2$, are linearly independent,  $\mathbf
p(\partial D_1)$ is a strictly convex curve with the origin as an
interior point and a  continuous function
$$
\Theta(X)=\text{rag~}(p_1(X)+i p_2(X))
$$
can be defined on $\partial D_1$. Now define polar coordinates on
$\partial D_1$ by $X_1=\cos(\theta+\theta_0)$,
$X_2=\sin(\theta+\theta_0)$  ($\theta_0$ to be chosen later) and let
$\Psi(\theta)=\Theta(X(\theta))$. Then
\begin{multline*}
\frac{p_1\partial_2 p_2-p_2\partial_2 p_1}
{p_1^2+p_2^2}\nu_1-\frac{p_1\partial_1 p_2-p_2\partial_1
p_1}{p_1^2+p_2^2}\nu_2 \\= \frac{p_1\partial_2 p_2-p_2\partial_2
p_1}{p_1^2+p_2^2}\cos(\theta+\theta_0)-\frac{p_1\partial_1
p_2-p_2\partial_1 p_1}{p_1^2+p_2^2}\sin (\theta+\theta_0) =
\partial_\theta \Psi(\theta).\end{multline*} Therefore, since $ds =
d\theta$,
 it follows from  \eqref{krit36} that
\begin{equation}\label{krit37}\begin{split}
 \lim\limits_{r\to 0}\int_{\partial D_r}\zeta(X)(\omega_2(X,
    \bn')\nu_1-\omega_2(X,
    \bn')\nu_1)ds=2\zeta(0)
\int_{[0,2\pi)} \partial_\theta \Psi(\theta) d\theta.
\end{split}\end{equation}
To study  $\Psi$, for $i=1,2$ let
$$
\boldsymbol \lambda_i=(\mu_{1i}, \mu_{2i}):=\rho_i(\cos\beta_i, \sin
\beta_i), \text{ where } (\mu_{i1},\mu_{i2}) = \boldsymbol
\mu_i,~\beta_i \in [0,2\pi).
$$
Since the $\boldsymbol \mu_i$ are linearly independent, so are the
$\boldsymbol \lambda_i$ and $\rho_i>0$. Hence
$$
p_i(X(\theta))= \boldsymbol \lambda_i\cdot X(\theta)=
\lambda_{i1}\cos(\theta+\theta_0)+\lambda_{i2}\sin(\theta+\theta_0)=
\rho_i \cos(\theta+\theta_0-\beta_i).
$$
Now set $\theta_0=\beta_1$ to obtain
\begin{equation*}\label{krit38}
\Psi(\theta)=\text{rag~}(\rho_1\cos\theta+i\rho_2\cos(\theta-\alpha)),
\quad \alpha=\beta_2-\beta_1 \neq 0.
\end{equation*}
To calculate $ \int_{[0, 2\pi)} \partial_\theta \Psi(\theta) d\theta
= \lim\limits_{\theta{\nearrow2\pi}}\Psi(\theta)-\Psi(0) $ in
\eqref{krit37} let
$$
b(\theta)=\frac{\rho_2\cos(\theta-\alpha)}{\rho_1\cos\theta}=
\frac{\rho_2}{\rho_1}\big(\cos \alpha + \tan \theta \sin
\alpha\big).
$$
Since $b(0)= \tan\Psi(0)$ and  $\lim\limits_{\theta\nearrow
\pi/2}b(\theta)= \pm\infty$ when  $\pm \sin\alpha>0$,
\begin{equation}\label{krit39}
\lim\limits_{\theta \nearrow \pi/2} \Psi(\theta)-\Psi(0)=
\text{sign~}(\sin\alpha) \frac{\pi}{2}-\arctan b(0).
\end{equation}
Similarly
\begin{align*}
&\lim\limits_{\theta\searrow\pi/2}b(\theta)= \mp\infty \text{~if~}
\pm\sin\alpha>0\text{~and~} \lim\limits_{\theta\nearrow
3\pi/2}b(\theta)= \pm\infty \text{~if~} \pm\sin\alpha>0,
\\
\intertext{and hence} &\lim\limits_{\theta\searrow \pi/2} \arctan
b(\theta)=-\text{sign~}(\sin{\alpha})\frac{\pi}{2} \text{ and }
\lim\limits_{\theta\nearrow 3\pi/2} \arctan
b(\theta)=\text{sign~}(\sin{\alpha})\frac{\pi}{2}.
\end{align*}
It follows
\begin{equation*}\label{krit40}
\lim\limits_{\theta\nearrow 3\pi/2}
\Psi(\theta)-\lim\limits_{\theta\searrow \pi/2}
\Psi(\theta)-\Psi(0)= \text{sign~}(\sin\alpha) \pi.
\end{equation*}
Finally,
$$
\lim\limits_{\theta\searrow 3\pi/2}b(\theta)= \mp\infty \text{~if~}
\pm\sin\alpha>0
$$

 yields
$$\lim\limits_{\theta\searrow 3\pi/2} \arctan
b(\theta)=-\text{sign}(\sin{\alpha})\pi/2,
$$
and
\begin{equation}\label{krit41}
\Psi(2\pi) -\lim\limits_{\theta\searrow 3\pi/2} \Psi(\theta)=
\text{sign~}(\sin\alpha) \pi/2+ \arctan b(0).
\end{equation}
Since $\Psi(\theta)$ is continuous on $[0,2\pi)$,
\eqref{krit39}-\eqref{krit41} lead to
\begin{equation}\label{krit44}\begin{split}
\int\limits_{[0, 2\pi)} \partial_\theta \Psi(\theta)
d\theta=\lim\limits_{\theta\to {2\pi-0}}\Psi(\theta)-A(0)
=\text{sign~}(\sin\alpha) 2\pi.
\end{split}\end{equation}
Since
$$\sin\alpha = \sin (\beta_2-\beta_1)=
\det\left(
\begin{array}{ccc}
0 &0 & 1
\\
\cos\beta_1 & \sin \beta_1 &0
\\
\cos \beta_2 &\sin\beta_2&0
\end{array}\right)
    $$
the signum of $\sin \alpha$ coincides with  the orientation of the
triple $(\boldsymbol k,\boldsymbol \mu_1, \boldsymbol \mu_2)$.
Therefore, since
$$
\boldsymbol k=\mathbf U(\bn')\bn', \quad \boldsymbol \mu_i= \mathbf
U(\bn')\partial_i\mathbf n(0),
$$
and $\det\mathbf U(\bn')=1$,  the orientation of the triplet
 $(\boldsymbol k,\boldsymbol \mu_1, \boldsymbol \mu_2)$ is the same that of $(\mathbf n(0),\partial_1\mathbf n(0), \partial_1\mathbf
 n(0))$ which equals $\text{sign}\,
\big(\mathbf n(0)\cdot\partial_1\mathbf n(0)\times \partial_2
\mathbf n(0)\big)=\text{sign}\, \Phi(A_\ell(\bn'))$.  It follows that
$\text{sign~}(\sin\alpha)=\text{sign~}\Phi(A_\ell(\bn'))$. Combining
these result with \eqref{krit37} and \eqref{krit44} and recalling
that $A_\ell(\bn')=0$ leads to the identity
\begin{align*}
 \lim\limits_{r\to 0}\int_{\partial D_r(A_\ell(\bn'))}\zeta(X)(\omega_2(X,
    \bn')\nu_1&-\omega_2(X,
    \bn')\nu_1)ds\\&=4\pi \zeta(A_\ell(\bn'))\text{~sign}\,\Phi(A_\ell(\bn')),
\end{align*}
 where $Y(\nn,\bn') = \{A_1(\nn,\bn'),\cdots A_n(\nn,\bn')\}$, from which \eqref{krit26} follows.
\end{proof}
\subsection{Proof of Theorem
\ref{aelita30}} Since by Lemma \ref{krit31},  $I(r,\bn')$, $J(r, \bn')$ are
continuous in $\bn'$ on $\mathcal Q_N$, their limits,
\begin{equation}\begin{split}\label{krit45}
I(0, \bn')&= \quad\int_{D_1}\big(\,\partial_1\zeta
\omega_2(X, \bn')-\partial_2\zeta \omega_1(X, \bn')\,\big)\, dX,\\
J(0, \bn')&=4\pi\sum_{\ell}^n  \zeta(A_\ell(\bn'))\text{\rm sign}\,
\Phi(A_\ell(\bn')),
\end{split}\end{equation}
are measurable with respect to $\bn'\in \mathcal Q_N$. By Lemma
\ref{krit20}, $|I({0, \bn'})|$ is bounded, and hence integrable, on
$\mathcal Q_N$ and, since  the measurable function
$\zeta\,\text{sign~}\Phi$ is bounded, it follows from  Theorem
\ref{krit2} that  $J(0, \bn')$ is integrable over $\mathcal Q_N$.

Now for a Borel set $\mathcal A\subset \mathbb S^2$ of positive
measure, let
\begin{equation*}%\label{krit46}
 \mathcal A_N= \mathcal A\cap \mathcal Q_N,\quad
 \mathcal F=\nn^{-1}(\mathcal A), \quad \mathcal
    F_N=\nn^{-1}(\mathcal A_N),
\end{equation*}
and note from Lemma \ref{krit6}(c) that $\mathcal A_N$ has positive
measure for all $N$ sufficiently large. Now, for such $N$, let
$\Omega_{N,i}:D_1 \to \mathbb R$, $i=1,2$, be given by
\begin{subequations}\label{krit47}
\begin{align}
\Omega_{N,i}(X)=\frac{1}{\mu_N}\int_{\mathcal A_N}\omega_i(X, \bn')d
S_{\bn'},\text{~where~} \mu_N=\text{meas}\,
(\mathcal A_N),\\
\intertext{and note from Lemma \ref{krit20} that}
|\Omega_{N,i}(X)|\leq \frac{c}{\mu_N}\left(\int_{\mathcal A}\frac{d
S_{\bn'}}{ |\bn'-\mathbf n(X)|}\right)\,|\nabla\mathbf n(X)|, \quad
i= 1,2.
\end{align}
\end{subequations}
 Then \eqref{krit45} and Corollary
\ref{krit4} with $g=4\pi \zeta(X)\text{sign~}\Phi(X)$ and $\mathcal
A$ replaced by $\mathcal A_N$  imply that
\begin{equation}\label{krit48}
    \int_{\mathcal A_N} J(0, \bn')\, d S_{\bn'}=4\pi\int_{\mathcal
    F_N}\zeta \Phi\, dX.
\end{equation}
Letting $r\to 0$ in  \eqref{krit23}, it follows from Lemma
\ref{krit25} that
$$
 \int_{D_1}\Phi(X) \zeta(X)\, dX=
    I(0, \bn')+J(0, \bn').
$$
 Then integrating both  sides over  $\mathcal
A_N$ in the light of  \eqref{krit45},\eqref{krit47}
and\eqref{krit48} gives
\begin{equation}\label{krit50}
    \int_{D_1}\zeta\Phi\, dX=\frac{4\pi}{\mu_N}\int_{\mathcal F_N}
    \zeta\Phi\, dX+\int_{D_1}(\partial_1\zeta\, \Omega_{N,2}-\partial_2\zeta
    \,\Omega_{N,1})\, dX.
\end{equation}
Next, recall that $\mathcal Q_N \subset \mathcal
Q_{N+1}\subset\mathcal Q_\infty$  where $\mathcal Q_\infty =
\bigcup_N \mathcal Q_N$ and $ \text{meas}\,(\mathcal E)=0$ where $
\mathcal E=\mathbb S^2\setminus\mathcal Q_\infty. $ To let $N\to
\infty$ in \eqref{krit50}, note that $\mu_N\to
\mu=\text{meas}\,(\mathcal A)$,
\begin{equation}\label{krit52}
\int_{D_1}(\partial_1\zeta\, \Omega_{N,2}-\partial_2\zeta
    \,\Omega_{N,1})\, dX\to\int_{D_1}(\partial_1\zeta\, \Omega_{2}-\partial_2\zeta
    \,\Omega_{1})\, dX,
\end{equation}
where
$$
\Omega_i(X)=\frac{1}{\mu}\int_{\mathcal A}\omega_i(X,
    \bn')\, dS_{\bn'},
$$
and
\begin{equation*}\label{krit52a}
\int_{\mathcal F_N} \zeta\Phi\, dX\to \int_{\mathcal F_{\infty}}
\zeta\Phi\, dX=\int_{\mathcal F} \zeta\Phi\, dX-\int_{\mathcal
F\setminus\mathcal F_\infty} \zeta\Phi\, dX\text{~as~} N\to
\infty,
\end{equation*}
 where $\mathcal F_\infty=\nn^{-1}(\mathcal A\cap\mathcal
Q_\infty), \quad \mathcal F\setminus\mathcal
F_\infty=\nn^{-1}(\mathcal E\cap\mathcal A).$ To show that
\begin{equation}\label{krit52b}
\int_{\mathcal F\setminus\mathcal F_\infty} \zeta\Phi\, dX=0,
\end{equation}
let $\{G_n\}\subset \SS$ be a decreasing sequence of open sets such
that $\mathcal E \subset G_n$, $\text{meas}\, (G_n) \to
0\text{~as~} n\to \infty$ and let $F_n= \nn^{-1}(G_n)$. Then, since $\mathcal
F\setminus \mathcal F_\infty\subset F_n := \nn^{-1}(G_n)$, it
suffices to prove that
\begin{equation}\label{krit52c}
\int_{F_n} |\Phi|\, dX\to 0\text{~as~} n\to \infty.
\end{equation}
Now from Theorem \ref{krit2},
$$
\int_{\mathbb S^2}\text{card\,} Y(\nn,\bn')\, dS_{\bn'}
=\int_{D_1}|\Phi|\, dX<\infty,
$$
and from  Corollary \ref{krit4}, since $\text{meas}\,( G_n) \to 0$,
$$
\int_{F_n}|\Phi|\, dx =\int_{G_n}\text{card\,} Y(\nn,\bn')\,
dS_{\bn'}\to 0 \text{~as~} n\to\infty.
$$
  This yields \eqref{krit52c} (and
hence \eqref{krit52b}) and it follows that
\begin{equation}\label{krit53}
\int_{\mathcal F_N} \zeta\Phi\, dX\to \int_{\mathcal F} \zeta\Phi\,
dX\text{~as~} N\to \infty.
\end{equation}
Hence  \eqref{krit50}, \eqref{krit52} and \eqref{krit53} imply that
\begin{equation}\label{krit57a}
    \int_{D_1}\zeta\Phi\, dX=\frac{4\pi}{\mu}\int_{\mathcal F}
    \zeta\Phi\, dX+\int_{D_1}(\partial_1\zeta\, \Omega_{2}-
    \partial_2\zeta
    \,\Omega_{1})\, dX,
\end{equation}
where $ \mathcal F=\nn^{-1}(\mathcal A)$, $
\mu=\text{meas}\,(\mathcal A) $ and, by \eqref{krit47},
\begin{equation*}\label{54}
|\Omega_i(X)|\leq\frac{1}{\mu} c_A |\nabla \mathbf n(X)|, \quad
\|\Omega_i\|_{L^2(D_1)}\leq c_A\|\nabla\mathbf n\|_{L^2(D_1)},\quad
i=1,2,
\end{equation*}
where, with $c$ an absolute constant,
$$
c_{\mathcal A}=c\sup_{X \in D_1}\int_{\mathcal
A}\frac{dS_{\bn'}}{|\mathbf n(X)-\bn'|}.
$$
Let $\Sigma_\rho=\big\{\bn'\in \mathbb S^2: \text{~geodesic
distance~} (\bn', \boldsymbol k)\leq \rho\big\}, \text{ where }
\text{meas}\,(\Sigma_\rho)=\mu. $ Clearly  $c^{-1}\sqrt{\mu}\leq
\rho\leq c\sqrt{\mu}$ and
\begin{equation*}\begin{split}
\int_{\mathcal A}\frac{dS_{\bn'}}{|\mathbf n-\bn'|}\leq
\int_{\Sigma_\rho}\frac{dS_{\bn'}}{|\boldsymbol k-\bn'|}\leq
c\sqrt{\mu},
\end{split}\end{equation*}
where $c$ is an absolute constant. Thus
$$
|\Omega_i(X)|\leq \frac{c}{\sqrt{\mu}}\, |\nabla \mathbf n(X)|
\text{ and } \|\Omega_i\|_{L^2(D_1)}\leq \frac{c}{\sqrt{\mu}}\,
\|\nabla \mathbf n\|_{L^2(D_1)}.
$$
This and \eqref{krit57a} yield \eqref{aelita31}, \eqref{aelita32},
which completes proof of Theorem \ref{aelita30}. \qed

\section{Examples}\label{niza}

This section examines  two families of immersions with one
singularity, both of which show the hypotheses of Theorems
\ref{aelita22} (see Section \ref{r1}) and \ref{aelita30} (see
Section \ref{r2}) are optimal. The first family, in Section
\ref{enneper}, concerns scaled versions of Enneper's  minimal
surface,  in which singularity formation is associated with
self-intersection. The second example, in Section \ref{duality}, is
a family of  stereographic projection for which singularity
formation is due to bubbling. However the Gauss maps of  these two
immersions are the same, and hence the preceding theory applies
without change to both, although they arise as Gauss maps of
significantly different surfaces in $\RR^3$.

\subsection{Enneper's Surface}\label{enneper}
 Let $X=(X_1, X_2)$ be a coordinate in $\mathbb R^2$ and
$Z=X_1+iX_2\in \mathbb C$. The  zero-mean-curvature  Enneper surface
in $\RR^3$ is  defined in  parametric form by

\begin{equation*}\label{niza1}
 \mathfrak E=\left\{ \Psi(Z)= \frac{1}{2}\Re\Big\{ Z-\frac{1}{3} Z^3,\,\,
i(Z+\frac{1}{3}Z^3),\,\,Z^2\Big\}: \quad Z\in \mathbb C\right\}.
\end{equation*}
Here, as in \cite{kuwert}, consider the scaled version defined in
 complex form by
\begin{equation*}\label{niza2}
\Psi_\varepsilon(Z)\equiv
\frac{1}{1+\varepsilon^2}\Re\Big\{\varepsilon ^2 Z-\frac{1}{3}
Z^3,\, i(\varepsilon ^2Z+\frac{1}{3}Z^3),\,\varepsilon Z^2\Big\},
\quad Z\in \mathbb C,
\end{equation*}
 or equivalently, in the real form, by
\begin{align}\label{niza3}&\Psi_\varepsilon(X)= \frac{1}{1+\varepsilon^2}\big(\psi_1(X),\psi_2(X),\psi_3(X)\big)=\\&\frac{1}{1+\varepsilon^2}\Big(\varepsilon^2 X_1
-\frac{1}{3} (X_1^3-3X_1X_2^2),\, -\varepsilon
^2X_2+\frac{1}{3}(X_2^3-3X_1^2X_2),\,\varepsilon X_1^2-\varepsilon
X_2^2\Big). \notag
\end{align}
Let $\mathfrak E_\varepsilon =\Psi_\varepsilon (D_1)$. Then equality holds in \eqref{eqq}, since $\mathfrak E_\varepsilon$ is a minimal surface.
\paragraph{Points of Self-intersection on ${\boldsymbol{\mathfrak E}_\varepsilon}$.}
 Veli\u{c}kovi\'c \cite{velickovic}  observed that
  in polar coordinates $X = r(\cos \phi, \sin\phi)$,
\begin{align*}\psi_1(X) &= \varepsilon^2 r\cos \phi -(r^3/3) \cos (3\phi), \\  \psi_2(X) &= -\varepsilon^2 r\sin \phi -(r^3/3) \sin (3\phi),\\
 \psi_3(X)&=\varepsilon r^2 \cos(2\phi),
\end{align*}
and hence
\begin{equation}\psi_1(X)^2 + \psi_2(X)^2 + \frac 43\psi_3(X)^2 = \left(\frac{|X|^3}{3} + \varepsilon |X|\right)^2, \quad X \in D_1.
\label{polar4}\end{equation} Therefore, at a point of
self-intersection of $\mathfrak E_\varepsilon$,
$\Psi_\varepsilon(\tilde X) = \Psi_\varepsilon(\hat X)$ say,
\eqref{polar4} implies  that $\hat r = \tilde r$ since $r \mapsto
(r^3/3+\varepsilon^2 r)$ is monotone. If $r>0$ denote the common
value of $\hat r$ and $\tilde r$ at this point of intersection, the
following equations  must be satisfied:
\begin{subequations}\label{qolar}\begin{align}\label{qolar1}
\varepsilon^2 \cos \hat\phi -(r^2/3) \cos (3\hat\phi) &= \varepsilon^2 \cos \tilde\phi -(r^2/3) \cos (3\tilde\phi), \\
\label{qolar2}  \varepsilon^2 \sin \hat\phi +(r^2/3) \sin (3\hat\phi) &= \varepsilon^2 r\sin \tilde\phi +(r^2/3) \sin (3\tilde\phi),\\
\label{qolar3} \cos(2\hat\phi)&= \cos(2\tilde \phi), \quad
\hat\phi,\tilde \phi \in [0,2\pi),~  \hat\phi \neq \tilde\phi .
\end{align}
\end{subequations}
Clearly, \eqref{qolar3} implies    $\cos^2\hat \phi = \cos^
2\tilde\phi$, equivalently, $\cos\hat \phi = \pm\cos \tilde\phi$. If
$\cos \hat \phi = \pm \cos \tilde \phi = 0$,  it may be assumed that
$\hat \phi = 3\pi/2$ and $ \tilde \phi = \pi/2$. So \eqref{qolar1}
and \eqref{qolar3} are satisfied,  and \eqref{qolar2} holds if an
only if $\varepsilon^2 = r^2/3$. On the other hand, if $\cos \hat
\phi = \pm \cos \tilde \phi = 1$ it may be assumed that $ \hat \phi
= \pi$ and $ \tilde \phi = 0$ and \eqref{qolar1}-\eqref{qolar3} hold
if and only if $\varepsilon^2 = r^2/3$.

Of the remaining cases  first note that  $\cos \hat \phi =  \cos
\tilde \phi$  implies   $\sin \hat \phi =-  \sin \tilde \phi$ and
hence, \eqref{qolar} is satisfied if and only if \eqref{qolar2}
holds, i.e. $\hat \phi$ and $\tilde \phi$ satisfy
$$
  \varepsilon^2  +r^2(1 -(4/3) \sin^2\phi) = 0,
$$
since $\sin (3\phi) =3 \sin \phi -4 \sin^3\phi$. Finally, when $\cos
\hat \phi =  -\cos \tilde \phi$ there are two possibilities, $\sin
\hat \phi =  \sin \tilde \phi$ and  $\sin \hat \phi =  -\sin \tilde
\phi$. In the first case, \eqref{qolar2} is satisfied if $\sin \hat
\phi =  \sin \tilde \phi$, and the system \eqref{qolar} is satisfied
if and only if
$$ \varepsilon^2 + r^2(1- (4/3) \cos^2\phi) =0.
$$
because since  $\cos (3\phi) = 4 \cos^3\phi - 3 \cos \phi$. In the
final case, when $\cos \hat \phi =  -\cos \tilde \phi$ and $\sin
\hat \phi =  -\sin \tilde \phi$, \eqref{qolar} is satisfied if  and
only if
$$
 \varepsilon^2  +r^2(1 -(4/3) \sin^2\phi) = 0 \text{ and } \varepsilon^2 + r^2(1- (4/3) \cos^2\phi) =0,
$$
and in particular only if $\cos^2\phi = \sin^2\phi$.

Note that in all cases $|X|^2 \geq 3\varepsilon^2$, if
$\Psi_\varepsilon(X)$ is a self-intersection point of $\mathfrak
E_\varepsilon$, and  self-intersection curves on $\mathfrak
E_\varepsilon$ approach the origin in $\RR^3$ as $\varepsilon \to
0$.

\paragraph{Tangent Vectors to $\mathfrak E_\varepsilon$.}
Obviously when $\varepsilon \neq 0$ the vectors
\begin{equation}\label{niza4}\begin{split}
    (1+\varepsilon^2)\partial_1\Psi_\varepsilon(X)&=(\varepsilon^2-(X_1^2-X_2^2),\, -2
    X_1X_2,\, 2\varepsilon X_1)=:\mathbf a_{\varepsilon}(X),\\
(1+\varepsilon^2)\partial_2\Psi_\varepsilon(X)&=(2X_1X_2,\,
-\varepsilon^2-(X_1^2-X_2^2),\, -2\varepsilon X_2)=:\mathbf
b_{\varepsilon}(X)
\end{split}\end{equation} are non-zero and tangent to $\mathfrak E_\varepsilon$.
They are  orthogonal to one another because
\begin{multline*}
\mathbf a_{\varepsilon}(X)\cdot\mathbf b_{\varepsilon}(X)=
2\varepsilon^2X_1X_2-2 X_1^3X_2+2 X_1X_2^3\\+ 2\varepsilon^2
X_1X_2+2X_1^3X_2-2X_1X_2^3-4\varepsilon^2X_1X_2=0.
\end{multline*}
Now with
\begin{equation}\label{niza5}
    \lambda_{\varepsilon}(X):= \varepsilon^2+ X_1^2+X_2^2= \varepsilon^2+|X|^2,
\end{equation}
\begin{multline*}
|\mathbf a_{\varepsilon}(X)|^2=
\varepsilon^4-2\varepsilon^2(X_1^2-X_2^2)+(X_1^2-X_2^2)^2\\
+4X_1^2 X_2^2+4\varepsilon^2
X_1^2=(\varepsilon^2+X_1^2+X_2^2)^2=\lambda_{\varepsilon}^2(X).
\end{multline*}
and
\begin{multline*}
|\mathbf b_{\varepsilon}(X)|^2= 4X_1^2X_2^2 +\varepsilon ^4
+2\varepsilon^2(X_1^2-X_2^2)+(X_1^2-X_2^2)^2\\
+4\varepsilon^2
X_2^2=(\varepsilon^2+X_1^2+X_2^2)^2=\lambda_{\varepsilon}^2(X),
\end{multline*}
which in summary says that
\begin{equation}\label{niza6}
|\mathbf a_{\varepsilon}(X)|=|\mathbf
b_{\varepsilon}(X)|=\lambda_{\varepsilon}(X), \quad \mathbf
a_{\varepsilon}(X)\cdot\mathbf b_{\varepsilon}(X)=0,\quad X \in
\RR^2.
\end{equation}
Since the coefficients $g_{ij}$ of the first fundamental form of the
immersion $\Psi_\varepsilon$ are
$$
g_{11}=\partial_1\Psi_\varepsilon\cdot\partial_1\Psi_\varepsilon,
\quad
g_{22}=\partial_2\Psi_\varepsilon\cdot\partial_2\Psi_\varepsilon,
\quad
g_{12}=g_{21}=\partial_1\Psi_\varepsilon\cdot\partial_2\Psi_\varepsilon,
$$
it follows by \eqref{niza4} and \eqref{niza6} that
$$
g_{11}=g_{22}=\frac{\lambda_{\varepsilon}^2}{(1+\varepsilon^2)^2}
\text{ and } g_{12}=g_{21}=0.
$$
As  in the
Introduction,  the immersion $\Psi_\varepsilon:D_1\to \mathbb R^n$
has a conformal metric $g_{ij}=e^{2f_{\varepsilon}}\delta_{ij}$ and the conformal factor $\exp{(f_{\varepsilon}(X))}$ at
$\Psi_\varepsilon(X)$ is given by
\begin{equation}\label{niza7}
  f_{\varepsilon}(X)= \log \lambda_{\varepsilon}(X) -\log(1+\varepsilon^2), ~  X \in D_1, \quad  f_{\varepsilon}(\partial D_1) =0.
\end{equation}

\paragraph{The Normal to $\mathfrak E_\varepsilon$.} A unit normal
$\mathbf n_{\varepsilon}(X)$ to $\mathfrak E_\varepsilon$ at
$\Psi_\varepsilon(X)$ is given by
\begin{equation*}\label{niza8}
    \mathbf n_{\varepsilon}(X) =\frac{\mathbf
    a_{\varepsilon}(X)\times\mathbf b_{\varepsilon}(X)}{|\mathbf a_{\varepsilon}(X)\times\mathbf b_{\varepsilon}(X)|},
\end{equation*}
where
$$
\mathbf
    a_{\varepsilon}(X)\times\mathbf b_{\varepsilon}(X)=\left|
\begin{array}{ccc}
\displaystyle { \boldsymbol i} &\displaystyle{\boldsymbol j} & \displaystyle{\boldsymbol k} \\
\displaystyle { \varepsilon^2-(X_1^2-X_2^2)} &\displaystyle{
-2X_1X_2} &\displaystyle{2\varepsilon X_1}
\\ \displaystyle {2X_1X_2} &
\displaystyle{-\varepsilon^2-(X_1^2-X_2^2)}&\displaystyle{-2\varepsilon
X_2
 }
  \end{array}
\right|,
$$
and the $\bs i$ component of which is
$$
4\varepsilon X_1X_2^2+2\varepsilon^2 X_1-2\varepsilon
X_1X_2^2+2\varepsilon X_1^3 =2\varepsilon^3 X_1+2\varepsilon X_1
(X_1^2+X_2^1)=2\lambda_{\varepsilon}(X) \varepsilon X_1,
$$
the $\bs j$ component is
$$
4\varepsilon X_1^2 X_2+2\varepsilon^3 X_2 -2\varepsilon
X_2(X_1^2-X_2^2) =(\varepsilon^2+X_1^2+X_2^2)2\varepsilon X_2=
2\lambda_{\varepsilon}(X) \varepsilon X_2,
$$
and the $\bs k$ component is
$$
-(\varepsilon^4-(X_1^2-X_2^2)^2)+4 X_1^2 X_2^2
=-(\varepsilon^4-(X_1^1+X_2^2)^2)= \lambda_{\varepsilon}(X)
(X_1^2+X_2^2-\varepsilon^2).
$$
Thus
$$
\mathbf a_{\varepsilon}(X)\times \mathbf b_{\varepsilon}(X)
=\lambda_{\varepsilon}(X) \mathbf c_{\varepsilon}(X),
$$
where $\lambda_{\varepsilon}>0$ is defined by \eqref{niza5},
\begin{equation}\label{niza9}
    \mathbf c_{\varepsilon}(X)=\big(\, 2\varepsilon X_1,\,\, \, 2\varepsilon X_2,
    \,\,\,
    |X|^2-\varepsilon^2\, \big),
\end{equation}
\begin{align}\notag
|\mathbf a_{\varepsilon}(X)\times\mathbf
b_{\varepsilon}(X)|^2&=\lambda_{\varepsilon}^2(4\varepsilon |X|^2
+\varepsilon^4-2\varepsilon^2|X|^2+|X|^4)\\&=
\lambda_{\varepsilon}(X)^2(\varepsilon^2+|X|)^2=\lambda_{\varepsilon}(X)^4,\notag
\intertext{and so} \label{niza10}
     \mathbf n_{\varepsilon}(X) &=\frac{\mathbf a_{\varepsilon}(X)\times\mathbf b_{\varepsilon}(X)}{|\mathbf a_{\varepsilon}(X)\times\mathbf b_{\varepsilon}(X)|}\, =\frac{ \mathbf c_{\varepsilon}(X)}{\lambda_{\varepsilon}(X)}.
\end{align}
\begin{remark}\label{nizaremark}
Note that $\nn_\varepsilon$,  which is the  Gauss  map of the
surface
 $\mathfrak E_\varepsilon$ parameterized by \eqref{niza3},  is smooth and  injective.  The smoothness is obvious, and if  $X,X' \in D_1$ are
such that $\mathbf n_{\varepsilon}(X')=\mathbf n_{\varepsilon}(X)$
then, by \eqref{niza9},
$$
\frac{X'}{\lambda_{\varepsilon}(X')}=\frac{X}{\lambda_{\varepsilon}(X)}
\quad \text{ and }\quad
\frac{|X'|^2-\varepsilon^2}{|X'|^2+\varepsilon^2}=
\frac{|X|^2-\varepsilon^2}{|X|^2+\varepsilon^2}
$$
From the first,  $X'=kX$ for some $k>0$ and then, from the second,
$k=1$. \qed\end{remark}

\subsection{Remark on Theorem \ref{aelita22}}\label{r1}
To show  the hypotheses of  Theorem \ref{aelita22}  are sharp the
first step is to calculate the norm of the conformal factor and
estimate the norm of  $\Phi_{\varepsilon}$. Since, by \eqref{niza7},
$$
\nabla
f_{\varepsilon}(X)=\frac{1}{\lambda_{\varepsilon}(X)}(2X_1,\,\,
2X_2)\quad \text{ and } \quad |\nabla f_{\varepsilon}(X)|^2=
\frac{4|X|^2}{\lambda_{\varepsilon}(X)^2},
$$
it follows that
\begin{align}\notag
\int_{D_1} |\nabla f_{\varepsilon}|^2\, dX&=4\int_{D_1}\frac{|X|^2\,
dX}{\varepsilon^2+|X|^2}\\\notag &=
4\int_0^{2\pi}\int_0^1\frac{r^2\, rdr}{(\varepsilon^2+r^2)^2}d\theta
= 4\pi\int_0^1\frac{s ds}{(s+\varepsilon^2)^2}\\&\notag =
4\pi\left\{\log\Big(\frac{1}{\varepsilon^2}+1\Big)-\Big(\frac{1}{1+\varepsilon^2}\Big)\right\},\\
\label{niza14} &=\Delta(\varepsilon)^2, \text{ where }
\Delta(\varepsilon)^2=4\pi\left(\log
\frac{1}{\varepsilon^2}-1\right) +O(\varepsilon^2),
\end{align}
as  $\varepsilon \to 0$. Hence
\begin{equation}\label{niza15}
    \|\nabla f_{\varepsilon}\|_{L^2(D_1)}\to \infty\text{~as~}\varepsilon\to 0.
\end{equation}
To investigate $\Phi_{\varepsilon}$ as $\varepsilon\to 0$ note from
 \eqref{niza12} that
\begin{equation}\label{niza16}\begin{split}\text{meas}(\mathfrak E_\varepsilon)&= \int_{D_1}|\Phi_{\varepsilon}|\,
dX=4\varepsilon^2\int_{D_1}\frac{dX}{(\varepsilon^2+|X|^2)^2}
\\&=4\varepsilon^2\int_0^{2\pi}\int_0^1\frac{rdr}{(\varepsilon^2+r^2)^2}
=\frac{4\pi}{1+\varepsilon^2} \to 4\pi ~\text{ as $\varepsilon\to
0$.}
\end{split}\end{equation}
To estimate $\|\nn\|_{L_2(D_1)}$, note from \eqref{niza9} and
\eqref{niza10} that
\begin{multline*}
\partial_1 \mathbf n_{\varepsilon} (X)=\frac{1}{\lambda_{\varepsilon}(X)^2}\Big(-4\varepsilon X_1^2,
-4\varepsilon X_1X_2, -2X_1|X|^2+2X_1 \varepsilon^2\Big)\\
+\frac{1}{\lambda_{\varepsilon}(X)^2}\Big(2\varepsilon
(\varepsilon^2+X_1^2+X_2^2), 0,
2X_1\varepsilon^2+2X_1(X_1^2+X_2^2)\Big)\\=
\frac{1}{\lambda_{\varepsilon}(X)^2}\Big(2\varepsilon^3-2\varepsilon
(X_1^2-X_2^2),\,\, -4\varepsilon X_1X_2,\,\, 4\varepsilon^2
X_1\Big),
\end{multline*}
and
\begin{multline*}
|\partial_1\mathbf
n_{\varepsilon}|^2=\frac{1}{\lambda_{\varepsilon}(X)^4}\Big(\,4\varepsilon^6-8\varepsilon^4(X_1^2-X_2^2)+
\\4\varepsilon^2(X_1^4-2X_1^2X_2^2+X_2^4)+16 \varepsilon^4
X_1^2+16\varepsilon^2 X_1^2X_2^2\,\Big)
\\
=\frac{1}{\lambda_{\varepsilon}(X)^4}\Big(\,4\varepsilon^6+8\varepsilon^4|X|^2+4\varepsilon^2|X|^4\,\Big)
=\frac{4\varepsilon^2\lambda_{\varepsilon}(X)^2}{\lambda_{\varepsilon}(X)^4}=\frac{4\varepsilon^2}{\lambda_{\varepsilon}(X)^2}.
\label{niza19a}
\end{multline*}
A similar calculation yields
\begin{equation*}\label{niza19b}
|\partial_2\mathbf
n_{\varepsilon}(X)|^2=\frac{4\varepsilon^2}{\lambda_{\varepsilon}(X)^2},
\end{equation*}
and hence
\begin{equation}\label{niza18}\begin{split}
    \int_{D_1}|\nabla\mathbf n_{\varepsilon}(X)|^2\,
    dX&=8\int_{D_1}\frac{\varepsilon^2dX}{(\varepsilon^2+|X|^2)^2}=
    8\int_0^{2\pi}\int_0^1 \frac{\varepsilon^2
    rdr\,d\theta}{(\varepsilon^2+r^2)^2}\\&=  8\pi\varepsilon^2\int_0^1 \frac{
    ds}{(\varepsilon^2+s)^2}=
    8\pi\Big(\frac{1}{1+\varepsilon^2}\Big)\to
    8\pi\text{~as~}\varepsilon\to 0.
\end{split}\end{equation}
Finally, recall from \eqref{niza7} that $f_\varepsilon = 0 $ on
$\partial D_1$, whence $f_\varepsilon \in W^{1,2}_0(D_1)$, and
\begin{equation*}
 \int_{D_1}\Phi_{\varepsilon} g_{\varepsilon}\,dX = \|\nabla f_{\varepsilon}\|_{L^2(D_1)}  \text{ where } g_{\varepsilon} = f_{\varepsilon}/\|\nabla f_{\varepsilon}\|_{L^2(D_1)},
\end{equation*}
because $-\Delta f_{\varepsilon}=\Phi_{\varepsilon}$. Since
$\|\nabla g_{\varepsilon}\|_{L^2(D_1)} =1$, it is immediate from
\eqref{niza15} that
\begin{equation} \|\Phi_{\varepsilon}\|_{W^{-1,2}(D_1)}\to \infty \text{~as~}\varepsilon\to 0.\label{niza19}
\end{equation}
Since equality holds in \eqref{eqq}, by \eqref{niza16}, \eqref{niza18} and \eqref{niza19}, the hypotheses
of Theorem \ref{aelita22} are sharp.

\subsection{Remark on  Theorem \ref{aelita30}}\label{r2}
A consequence of  \eqref{aelita32} in Theorem \ref{aelita30} is
that for every
\begin{equation}\label{niza21}
\zeta\in W^{1,2}_0(D_1),\quad \|\nabla \zeta\|_{L^2(D_1)}=1,
\end{equation}
for every $\varepsilon >0$ and for every $\mathcal A\subset \mathbb
S^2$ with  measure $\mu>0$,
\begin{equation}\label{niza17}
    \Big|\int_{D_1} \Phi_{\varepsilon} \zeta\, dX-\frac{4\pi}{\mu}\int_{\mathcal
    F} \Phi_{\varepsilon}\zeta\, dX\Big|\leq c(\mu),
\end{equation}
 even if
\begin{equation}\label{niza20}
    \Big|\int_{D_1} \Phi_{\varepsilon} \zeta\, dX\Big|\to \infty \text{~as~} \varepsilon \to
    0.
\end{equation}
For the significance of this observation, recall Remark \ref{holo}.
Here this is illustrated by showing that $ \zeta_{\varepsilon}
:=g_\varepsilon$ defined above  has  the property that, for any
$\mathcal A$ with positive measure,  \eqref{niza21}, \eqref{niza17}
and \eqref{niza20}  hold with $\mathcal
F_\varepsilon=\nn_\varepsilon^{-1}(\mathcal A)$.

Recall from the argument for \eqref{niza19}  that with
$g_{\varepsilon} = f_{\varepsilon}/\|\nabla
f_{\varepsilon}\|_{L^2(D_1)} \in W^{1,2}_0$,
\begin{equation}\label{niza21a}
 \int_{D_1}\Phi_{\varepsilon} g_{\varepsilon}\,dX = \|\nabla f_{\varepsilon}\|_{L^2(D_1)},
\end{equation}
So let $\zeta_\varepsilon = g_\varepsilon $. Then, from
\eqref{niza7},
\begin{equation*}
    \zeta_{\varepsilon}=\frac{f_{\varepsilon}}{\|\nabla f_{\varepsilon}\|_{L^2(D_1)}}=\frac{\log
    \lambda_{\varepsilon}-\log(1+\varepsilon^2)}{\Delta(\varepsilon)}
\end{equation*}
where $\Delta(\varepsilon)$ is defined by \eqref{niza14}. It follows
from \eqref{niza21a}, \eqref{niza14} and \eqref{niza15} that
$\zeta_{\varepsilon}$ satisfies conditions \eqref{niza21} and
\eqref{niza20}, and, from \eqref{niza12},
\begin{equation*}\label{niza22}
    \Phi_{\varepsilon}\,\zeta_{\varepsilon}=-\frac{4\varepsilon^2}{\lambda_{\varepsilon}^2\Delta(\varepsilon)}(\log
    \lambda_{\varepsilon}-\log(1+\varepsilon^2)).
\end{equation*}
To proceed,  is convenient to introduce new independent variables,
$$
X=\varepsilon \xi, \quad \xi=(\xi_1, \xi_2),
$$
so that $ \lambda_{\varepsilon}=\varepsilon^2(1+|\xi|^2)$ and $\log
\lambda_{\varepsilon}=\log\varepsilon^2+\log(1+|\xi|^2), $ and in
these variables
\begin{equation}\label{niza23}
    \Phi_{\varepsilon}\zeta_{\varepsilon}=\frac{-4\log\varepsilon^2}{\Delta(\varepsilon)\varepsilon^2(1+|\xi|^2)^{2}}
   + \frac{1}{\varepsilon^2}\mathbf I,
\end{equation}
where
\begin{equation}\label{niza24}
    \mathbf
    I(\xi)=4\frac{\log(1+\varepsilon^2)-\log(1+|\xi|^2)}{\Delta(\varepsilon)(1+|\xi|^2)^2}.
\end{equation}
By Remark \ref{nizaremark},  $X\to \mathbf n_{\varepsilon}(X)$ is
injective and its Jacobian is $|\Phi_{\varepsilon}|$. For $\mathcal
A\subset \mathbb S^2$ let $\mathcal
F_\varepsilon=\nn_{\varepsilon}^{-1}(\mathcal A)$ and
$\mu=\text{meas}\,(\mathcal A)$. Then, from \eqref{niza12},
\begin{equation}\label{niza25}
    \mu=\int_{\mathcal F_\varepsilon} |\Phi| dX =4\varepsilon^2\int_{\mathcal
    F_\varepsilon}\frac{dX}{(\varepsilon^2+|X|^2)^2}=4\int_{\mathcal
    F_\varepsilon/\varepsilon}\frac{d\xi}{(1+|\xi|^2)^2}.
\end{equation}
Note also that
\begin{equation}\label{niza26}
4\int_{\mathbb R^2}\frac{d\xi}{(1+|\xi|^2)^2} = 4\pi \int_0^\infty
\frac{dt}{(1+t)^2}=4\pi.
\end{equation}
Therefore, by \eqref{niza23}  and \eqref{niza25},
\begin{align}
\notag &\int_{D_1}\Phi_{\varepsilon} \zeta_{\varepsilon}\,
dX-\frac{4\pi}{\mu}\int_{\mathcal F_\varepsilon}\Phi_{\varepsilon}
\zeta_{\varepsilon}\, dX\\&\notag =\frac{-\log
\varepsilon^2}{\Delta(\varepsilon)}
\Big\{\int_{D_{1/\varepsilon}}\frac{4d\xi}{(1+|\xi|^2)^2}-\frac{4\pi}{\mu}
\int_{\mathcal
F_\varepsilon/\varepsilon}\frac{4d\xi}{(1+|\xi|^2)^2}\Big\}
+\Big\{\int_{D_{1/\varepsilon}}\mathbf I\,
d\xi-\int_{\mathcal F_\varepsilon/\varepsilon}\mathbf I\, d\xi\Big\}\\
\notag &= \frac{-\log \varepsilon^2}{\Delta(\varepsilon)}
\Big\{\int_{D_{1/\varepsilon}}\frac{4d\xi}{(1+|\xi|^2)^2}-4\pi
\}\Big\} +\Big\{\int_{D_{1/\varepsilon}}\mathbf I\,
d\xi-\int_{\mathcal F_\varepsilon/\varepsilon}\mathbf I\, d\xi\Big\}
\\&=:J_1+J_2. \notag
\end{align}
Now by \eqref{niza14}, \eqref{niza25} and \eqref{niza26},
\begin{align}\notag
J_1&=\frac{-\log \varepsilon^2}{\Delta(\varepsilon)}
\Big\{\int_{D_{1/\varepsilon}}\frac{4d\xi}{(1+|\xi|^2)^2}-4\pi\Big\}=
\frac{4\log \varepsilon^2}{\Delta(\varepsilon)} \,\,\int_{|\xi|\geq
\varepsilon^{-1}}\frac{d\xi}{(1+|\xi|^2)^2}\\
&= \frac{4\pi\varepsilon \log
\varepsilon^2}{(1+\varepsilon)\Delta(\varepsilon)} \leq
c\varepsilon\sqrt{|\log\varepsilon|} \text{ as } \varepsilon \to 0.
\label{niza29}\end{align} Next,  by \eqref{niza24},
\begin{equation}\label{niza30}\begin{split}
|J_2|\leq \frac{c}{\Delta(\varepsilon)}\int_{\mathbb R^2}
\frac{\log(1+|\xi|^2)+1}{(1+|\xi|^2)^2}\,d\xi\leq
\frac{c}{\Delta(\varepsilon)}\leq
\frac{c}{\sqrt{|\log\varepsilon|}}.
\end{split}\end{equation}
Combining \eqref{niza29} and \eqref{niza30}  leads to
$$
 \Big|\int_{D_1} \Phi_{\varepsilon} \zeta_{\varepsilon}\, dX-\frac{4\pi}{\mu}\int_{\mathcal
    F} \Phi_{\varepsilon}\zeta_{\varepsilon}\, dX\Big|\leq \frac{c}{\sqrt{|\log\varepsilon|}},
$$
which yields desired estimate \eqref{niza17}, while \eqref{niza21}
and \eqref{niza20} are satisfied.

\subsection{Stereographic Projections}\label{duality}
Recall the standard stereographic projections,
$$ (X_1,X_2) \mapsto \frac{1}{|X|^2 +4} \Big( 4X_1,\quad 4X_2, \quad \pm(|X|^2-4)\Big),\quad X \in \RR^2,
$$
each of which maps the horizontal plane onto the unit sphere with
one of the poles, $+\boldsymbol k$ or $-\boldsymbol k$, removed.
From them, two families of immersions of $D_1$ into $\RR^3$ are
defined by  replacing $X$ by  $ 2X/\varepsilon$ to obtain
\begin{equation}\label{niza31}\begin{split}
\Psi^{\varepsilon}_{\pm}(X)&=\frac{1}{\lambda_{\varepsilon}(X)}
\left ( {2\varepsilon X_1}, { 2\varepsilon X_2},\pm(
{|X|^2-\varepsilon^2} \right),\\\lambda_{\varepsilon}(X) &=
\varepsilon ^2 + |X|^2,\quad \,X \in D_1.
\end{split}\end{equation}
Both $\Psi_\pm^{\varepsilon}$ map $D_1$ into $\SS$ leaving
 a small round hole, centred  at $\pm\boldsymbol k$  with diameter that  vanishes as $\varepsilon\to 0$. Since  $\Psi^{\varepsilon}_\pm$ coincides with its  Gauss map,
\begin{equation*}\label{niza32}
    \mathbf n^{\varepsilon}_{\pm}(X)=\Psi^{\varepsilon}_{\pm}(X), \quad X\in D_1,
\end{equation*}
it is immediate from \eqref{niza5}, \eqref{niza9}, \eqref{niza10}
and \eqref{niza31} that
$$\nn_\varepsilon(X) = \nn^\varepsilon_+(X),\quad X \in D_1.
$$
In other words,  the Gauss maps of the Enneper surface parameterized
by \eqref{niza3}, and  of the stereographic projections
$\Psi^\varepsilon_+$, coincide. ($\Psi^1_-$ was considered without
reference to Enneper surfaces in \cite[Example 5.2.2]{helein}.)

Recall from  \eqref{aelita19}, \eqref{niza10} that
\begin{equation}\label{niza11}\begin{split}
    \Phi_{\varepsilon}(X) &= \mathbf n_{\varepsilon} (X)\cdot(\partial_1\mathbf n_{\varepsilon} (X)\times
    \partial_2\mathbf n_{\varepsilon} (X))\\&=\frac{1}{\lambda_{\varepsilon}(X)^3}\,\mathbf c_{\varepsilon}(X)\cdot(\partial_1\mathbf c_{\varepsilon}(X) \times
    \partial_2\mathbf c_{\varepsilon}(X)),\end{split}
\end{equation}
where
$$
\mathbf
    \partial_1\mathbf c_{\varepsilon}(X)\times\partial_2\mathbf c_{\varepsilon}(X)=\left|
\begin{array}{ccc}
\displaystyle { \bs i} &\displaystyle{\bs j} & \displaystyle{\bs k} \\
\displaystyle { 2\varepsilon} &\displaystyle{0} &\displaystyle{2
X_1}
\\ \displaystyle {0} &
\displaystyle{2\varepsilon}&\displaystyle{2 X_2
 }
  \end{array}
\right|=-4\varepsilon X_1\bs i -4\varepsilon X_2 \bs j+
4\varepsilon^2\bs k.
$$
and hence
$$
\mathbf c_{\varepsilon}(X)\cdot(\partial_1\mathbf c_{\varepsilon}(X)
\times
    \partial_2\mathbf c_{\varepsilon}(X))=-8\varepsilon^2
    |X|^2+4\varepsilon^2|X|^2-4\varepsilon^4=-4\varepsilon^2 \lambda_{\varepsilon}(X).
$$
From this and \eqref{niza11},
\begin{equation}\label{niza12}
    \Phi_{\varepsilon}(X)=\mathbf n_{\varepsilon}(X)\cdot(\partial_1\mathbf n_{\varepsilon}(X)\times
    \partial_2\mathbf n_{\varepsilon}(X))=-\frac{4\varepsilon^2}{\lambda_{\varepsilon}(X)^2}.
\end{equation}

\begin{remark}
At first glance,  formula \eqref{niza12} is misleading since it
appears to imply that the curvature of the convex sphere $\mathbb S^2$, which
is proportional  to $\Phi$, is negative. However,
the immersions $\Psi^-$ and $\Psi^+$ have the different
orientations. The orientation of $\Psi^-$ is positive which means
  $\Phi=4\varepsilon^2/ \lambda^2$, corresponding to the
the curvature of the sphere being positive.  By  contrast,
the orientation of the immersion $\Psi^+$ is negative and $\Phi$
changes the sign.  In fact, $\Phi=-Ke^{2f}$ in formula \eqref{niza12}.
Since the orientation of the
Enneper  immersion $\Psi_\varepsilon$ defined by \eqref{niza3} is
positive, and the corresponding function $\Phi_\varepsilon$ is
negative.  This means that the Enneper surface has the negative
curvature, and so  is hyperbolic. \qed
\end{remark}

\ed